\newtheorem{thm}[equation]{Theorem} 
\newtheorem{lemma}[equation]{Lemma} 
\newtheorem{prop}[equation]{Proposition} 
\newtheorem{cordef}[equation]{Corollary - Definition} 
\newtheorem{fact}[equation]{Fact} 
\theoremstyle{definition} 
\newtheorem{defi}[equation]{Definition} 
\newtheorem{remark}[equation]{Remark} 
\numberwithin{equation}{section} 
\newtheorem{example}[equation]{Example} 
\newcommand{\la}{\longrightarrow}
\newcommand{\ov}{\overline}
\newcommand{\Aut}{\operatorname{Aut}}
\newcommand{\Out}{\operatorname{Out}}
\newcommand{\Inn}{\operatorname{Inn}}
\newcommand{\codim}{\operatorname{codim}}
\newcommand{\pr}[1]{\mathbb{P}^{#1}}
\newcommand{\C}{\mathbb{C}}
\def\O{\mathcal O}
\def\X{\mathcal X}
\def\Y{\mathcal Y}
\newcommand{\Z}{\mathbb{Z}}
\newcommand{\R}{\mathbb{R}}
\newcommand{\G}{\Gamma }
\def\Is{{\mathcal I}so}
\newcommand{\sing}{X_{{\rm sing}}}
\def\mo{\underline{0}}
 \newcommand{\Mgbst}{\ov{\mathcal{M}_g}}
\def\Mgb{\overline{M}_g}
 \newcommand{\Mgnb}{\ov{M}_{g,n}}
  \newcommand{\Mgn}{{M_{g,n}}}
\newcommand{\Cgk}{C_g^{(k)}}
\newcommand{\Mt}{M^{\rm{{trop}}}}
\newcommand{\Mgpnn}{M_{g,n}^{\rm pure}}
\newcommand{\Mgtn}{{M_{g,n}^{\rm trop}}}
\newcommand{\Mgnt}{{M_{g,n}^{\rm trop}}}
\newcommand{\Mgrnn}{M_{g,n}^{\rm reg}}
\newcommand{\Mgtnn}{{M_{g,n}^{\rm trop}}}
\newcommand{\Mgtnb}{\ov{M_{g,n}^{\rm trop}}}
\newcommand{\wS}{w_{/S}}
\newcommand{\GS}{\Gamma_{/S}}
\begin{document}
\setcounter{page}{1}
%
%
\long\def\replace#1{#1}

%
%
\title{\replace{Algebraic  and tropical curves: comparing their moduli spaces}}
%
%
\author{\replace{Lucia Caporaso}}
\address{\replace{Dipartimento di Matematica - Universit\`a Roma Tre - Roma - ITALY}}
\email{\replace{caporaso@mat.uniroma3.it}}

%
%
\subjclass[2000]{Primary \replace{14H10, 14TXX}; Secondary \replace{05CXX}}
\keywords{\replace{smooth curve, stable curve, tropical curve, combinatorial graph, metric graph, dual graph, moduli space,  Teichm\"uller theory}}

\begin{abstract}
	
\replace{We construct  the moduli space for equivalence classes of  $n$-pointed tropical curves of genus 
$g$, together with its bordification given by  weighted  tropical curves, and its compactification by extended (weighted) tropical curves. We compare it to the moduli spaces of  smooth and stable,  $n$-pointed algebraic curves, from the combinatorial, the topological, and   the Teichm\"uller point of view.
}

\end{abstract}  

\maketitle
\thispagestyle{empty}
\tableofcontents

\section{Introduction}
\subsection{Overview of the paper}
This is a largely expository  article whose main goal is to construct the moduli space of tropical curves with marked points,
and to compare it to the moduli space of algebraic   curves, highlighting the symmetries and  the analogies
which  occur at various places.

The moduli space of nonsingular algebraic curves  is a well known
object that has been thoroughly studied, together with its compactifications, over the last five decades (to say the least).
There are several expository papers and monographs    attesting the depth and richness of the subject.
In this paper we shall give the basic definitions and state some famous results in a way that should be accessible to the non expert reader.
We will provide references   where details can be found. 

On the other hand tropical geometry is a rather young branch of mathematics, which has seen a flourish of diverse activities
over the recent years. Thus, quite naturally,    the field still lacks exhaustive bibliographical sources,
and even solid foundations. Hence we shall treat   tropical curves and their moduli quite thoroughly,
including technical details and proofs,
although some of the results we shall describe are known, if maybe only  as folklore.

Let us illustrate the content of the paper, section by section.
We begin 
with algebraic nodal  curves
of genus $g$ with $n$ marked points,   define the notion of stably equivalent curves, and introduce  the moduli spaces 
for these equivalence classes, the well known Deligne-Mumford spaces
$\Mgnb$.

Next, in Section~\ref{trpsec},
after some preliminaries about graphs,     we give the original definition of     abstract tropical curves
and    of tropically equivalent curves. Tropical equivalence should be viewed as the analogue of stable equivalence. Tropical curves up to tropical equivalence are parametrized by metric graphs with no vertex of valence less than $3$.
We extend the set-up to curves with marked points.

As it turns out, tropical curves do not behave well in families, in the sense that the genus may drop
under specialization.  One solution to this problem, introduced in \cite{BMV} for unpointed curves,
is to add  a weight function on the  vertices of their corresponding graph, thus generalizing the original notion  (when the weight function is zero we get back the original definition). 
We do that in the last part of Section~\ref{trpsec}.
Of course, this may not be the only, or the best, solution.
But it is one that can be worked out, and whose analogies with   moduli of stable algebraic curves are strong and interesting.
To distinguish, we refer to the original definition as ``pure" tropical curve,
and to the generalized one as ``weighted" tropical curve.

In Section~\ref{modsec} we explicitly
construct, as topological spaces,
 the moduli space for $n$-pointed weighted tropical curves of genus $g$, denoted by $\Mgtn$, 
and the analog  for pure tropical curves, denoted by $\Mgpnn$.
It is clear from the construction   that these two spaces are almost never topological manifolds; but they turn out  to be
  connected, Hausdorff, of pure dimension $3g-3+n$.
Also, $\Mgpnn$ is open and dense in $\Mgtn$; see Theorem~\ref{Mgt}.

Furthermore, $\Mgpnn$ and $\Mgtn$ are not tropical varieties in general.
The case $g=0$, which has been extensively studied
(see  for example  \cite{MIK3}, \cite{MIK5}, \cite{GM}, \cite{GKM}),
 is an exception, because of the absence of automorphisms.
Indeed  $\Mt_{0,n}$ (which coincides with
$M_{0,n}^{\rm{ pure}}$) is   a tropical variety for every $n\geq 3$.

In the literature there are a few constructions   giving these moduli spaces some special structure, resembling that of a tropical variety;
see \cite{GKM}, \cite{BMV} and \cite{chan} for example; nevertheless we
   prefer to treat them just as topological spaces, as the categorical picture does not look clear at the moment. 

Essentially by definition, $\Mgtn$ is closed under specialization, but it is not compact. A natural compactification  for it, by what we call ``extended tropical curves",   is studied in subsection~\ref{comp}. 

In Section~\ref{compsec} we compare the space  $\Mgtn$ to the Deligne-Mumford space  $\Mgnb$. Both spaces admit a natural partition
indexed by stable graphs,
with a poset structure defined by the inclusion of closures. The properties 
of the two partitions, and the correspondence 
  between them, 
 is described in Theorem~\ref{corr}.

Section~\ref{secteich} considers the Teichm\"uller approach to the moduli space of smooth curves $M_g$,
and to the moduli space of metric graphs.
While the Teichm\"uller point of view constitutes  one of the principal chapters of the   complex algebraic theory,
its analog for metric graphs, by means of the Culler-Vogtmann space (see \cite{CuVo}),
is less known in tropical geometry, and awaits to be further studied. 
We conclude the  paper with a table summarizing these  analogies, and with a list of open problems and research directions.

\noindent
{\it Acknowledgements.} I wish to thank M. Melo, G. Mikhalkin and F. Viviani   for several useful  comments,
and a referee for a very detailed report.

\subsection{Algebraic curves}
Details about the material of this and the next subsection may be found, for example, in   \cite[Chapt 1-4]{HM} or in  \cite[Chapt. 10 and 12]{gac}.

By an ``algebraic curve", or simply ``curve" when no confusion is possible, in this paper we mean a reduced, connected projective variety 
(not necessarily irreducible) of dimension one, defined over an algebraically closed field.
The arithmetic genus  of a curve $X$ is $g_X=h^1(X,\O_X)$.

We denote by $\sing$ the set of singular points of $X$.

We will only be interested in nodal curves, i.e. curves  having at most ordinary double points as singularities.
The reason is that the class of   nodal curves, up to an important equivalence relation which we will define in subsection~\ref{stab}, has a moduli space with very nice properties: it is a projective, irreducible (reduced and normal)
variety containing as a dense open subset the moduli space of nonsingular curves.
The need for an equivalence relation is quite fundamental, as the set of all nodal curves is too big to admit a separated moduli space. It is in fact easy to produce families of nodal, or even nonsingular, curves admitting (infinitely many) non isomorphic nodal curves as limits.

Before being more specific, we want to extend the present discussion to so-called $n$-pointed curves.

Let $g$ and $n$ be nonnegative integers.
A {\it nodal $n$-pointed curve} of genus $g$, denoted by $(X;p_1,\ldots,p_n)$, is a nodal curve $X$ of arithmetic genus $g$, together with $n$ distinct nonsingular points $p_i\in X$, $i=1,\ldots, n$.

The nodal $n$-pointed curve  $(X;p_1,\ldots,p_n)$ is {\it stable} (in the sense of Deligne and Mumford)
if the set of automorphisms of $X$ fixing $\{p_1,\ldots,p_n\}$ is finite;
or, equivalently, if the line bundle $\omega_X(p_1+\ldots + p_n)$ has positive degree on every subcurve of $X$
(where $\omega_X$ is the dualizing line bundle\footnote{For smooth curves $\omega_X$  coincides with the canonical bundle.} of $X$).

Finally, since for any irreducible component $E\subset X$ we have
\begin{equation}
\label{degdual}
\deg_E\omega_X(\sum_1^n p_i)=2g_E-2+|E\cap \big\{p_1,\ldots , p_n\}|+|E\cap\ov{X\smallsetminus E}|
\end{equation}
we have that our nodal $n$-pointed curve  is stable
 if and only if for every irreducible component $E$ of arithmetic genus at most $1$ we have

$$|E\cap \{p_1,\ldots, p_n\}|+|E\cap\ov{X\smallsetminus E}|\geq
 \begin{cases}
3 & \text{ if $g_E=0$}   \\
1 & \text{ if $g_E=1$.}
\end{cases}
$$
Note that the requirement for $g_E=1$ is saying that there exist no stable curves for $g=1$ and $n=0$.
More generally it is easy to check that stable curves exist if and only if $2g-2+n\geq 1$.
\begin{fact}
\label{mgnb}
Assume $2g-2+n\geq 1$.
There exists an irreducible projective scheme
of dimension $3g-3+n$, denoted by $\Mgnb$, which is the
(coarse) moduli space of $n$-pointed stable curves of genus $g$.
The moduli space of nonsingular $n$-pointed curves of genus $g$ is an open (dense) subset $\Mgn\subset \Mgnb$. 
\end{fact}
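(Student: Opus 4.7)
The plan is to follow the classical Deligne--Mumford--Knudsen construction, treating the unpointed case first and then reducing the pointed case to it by induction. The heart of the argument is to realise $\Mgnb$ as a projective GIT quotient of a Hilbert scheme, and to extract irreducibility and dimension from deformation theory together with one serious input about the topology of $M_g$.

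First I would handle $n=0$ and $g\geq 2$. For a stable curve $X$ of genus $g$, the tricanonical bundle $\omega_X^{\otimes 3}$ has degree $6g-6$ on every component and is very ample (this uses exactly the stability condition $\deg_E\omega_X>0$ recorded in the text). Thus every stable curve embeds into $\mathbb{P}^{5g-6}$ with fixed Hilbert polynomial $P(t)=(6g-6)t-g+1$. The locus of tricanonically embedded stable curves is a locally closed, $PGL_{5g-5}$-invariant subscheme $H$ of $\mathrm{Hilb}^{P}(\mathbb{P}^{5g-6})$. Mumford's GIT analysis of the Chow/Hilbert point of tricanonical curves shows that the points of $H$ are exactly the GIT-semistable points for a suitable linearisation, and that the orbit space $\ov{M}_g := H /\!/ PGL_{5g-5}$ exists as a projective scheme coarsely representing the moduli functor of stable curves. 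For the boundary cases $g\leq 1$ with $2g-2+n\geq 1$ one adapts the argument by replacing $\omega_X$ with $\omega_X(\sum p_i)$, using precisely the positivity given by \eqref{degdual}.

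Next I would address the three listed properties. For the \emph{dimension}, infinitesimal deformation theory of a stable curve $X$ identifies the tangent space at $[X]\in \ov{M}_g$ with $\mathrm{Ext}^1(\Omega_X,\O_X)$, whose dimension is $3g-3$ by Riemann--Roch on the normalisation together with the contribution $|\sing|$ from smoothings of nodes; smoothness of the stack and reducedness/normality of the coarse space then give the global dimension. For \emph{properness} one invokes the stable reduction theorem: every family of smooth curves over a punctured disc extends, after a finite base change, uniquely to a family of stable curves; this gives the valuative criterion for $\ov{M}_g$ and shows $M_g$ is dense. For \emph{irreducibility}, which is the step I expect to be the main obstacle, I would reduce to connectedness of $M_g$ over $\mathbb{C}$ and appeal to the classical theorem that the Hurwitz scheme $\mathcal{H}_{g,d}$ of simply branched degree-$d$ covers of $\mathbb{P}^1$ is irreducible (Clebsch/Hurwitz, rigorously treated by Fulton); since $\mathcal{H}_{g,d}\to M_g$ is surjective for $d\gg 0$, irreducibility passes to $M_g$ and hence, by density, to $\ov{M}_g$. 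Alternatively one can argue via Teichm\"uller theory, noting that $M_g$ is the quotient of the contractible Teichm\"uller space by the mapping class group, hence connected.

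Finally I would obtain $\Mgnb$ from $\ov{M}_g$ by Knudsen's inductive construction: starting from $\ov{M}_{g,0}$ (or $\ov{M}_{g,1}$ in unstable genera), define $\ov{M}_{g,n+1}$ as a suitable contraction of the universal stable curve $\mathcal{C}_{g,n}\to \ov{M}_{g,n}$, where components destabilised by forgetting the extra marked point are contracted. Knudsen's theorems show this is again a projective scheme that coarsely represents the moduli functor of $(n+1)$-pointed stable curves, and that the forgetful morphism is flat of relative dimension one; this inductively raises the dimension from $3g-3$ to $3g-3+n$ and preserves irreducibility. The openness and density of $M_{g,n}\subset \Mgnb$ follow from the fact that smoothing any subset of nodes is a non-obstructed deformation and that smooth $n$-pointed curves form a non-empty open subset of each step.
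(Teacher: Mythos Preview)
The paper does not prove this statement: it is recorded as a \emph{Fact}, attributed in the subsequent remark to Deligne--Mumford, Knudsen, and Gieseker, with the reader referred to \cite{DM}, \cite{Kn}, \cite{Gie} and to \cite{HM}, \cite{gac} for details. The only elaboration the paper offers is the brief sketch opening Section~\ref{secteich}, where for $n=0$ it indicates that stable curves are embedded pluricanonically in a fixed $\pr{r}$, that the resulting locus $H$ in the Hilbert scheme is acted on by $PGL(r+1)$, and that $\Mgb=H/G$ via GIT; irreducibility and normality are mentioned as consequences of $H$ being smooth and irreducible.

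Your outline follows exactly this route and fills in what the paper deliberately omits: the tricanonical embedding, the identification of the tangent space with $\mathrm{Ext}^1(\Omega_X,\O_X)$ for the dimension count, stable reduction for properness and density of $M_g$, the Clebsch--Hurwitz--Fulton (or Teichm\"uller) argument for irreducibility, and Knudsen's contraction morphism $\ov{M}_{g,n+1}\to\ov{M}_{g,n}$ for the inductive passage to marked points. As a sketch this is accurate and matches the standard literature the paper cites. Two small remarks: Gieseker's original argument uses a higher pluricanonical embedding than the third, though tricanonical stability is also known; and the paper's own sketch in Section~\ref{secteich} treats only $n=0$, so your inclusion of Knudsen's induction is genuinely additional to what the paper indicates. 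But there is nothing here to compare against a proof in the paper, since none is given.
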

\begin{remark}
\label{moduli}

What do we mean by ``moduli space"?
In algebraic geometry there are various categories 
(schemes, stacks, algebraic spaces)
in   which moduli spaces can be axiomatically defined and their
properties rigorously proved. The basic axiom  in the schemes category  is, of course,
the fact that closed points\footnote{I.e. points defined over the base   field, versus points defined over some field extension of it.} be in bijection with the isomorphism classes of the objects under investigation
(stable pointed curves, in our case).
Other axioms require  that to a family of objects  parametrized by a scheme $B$
there is a unique associated morphism from $B$ to the moduli space,
and ensure that a moduli space is unique when it exists.

The    word ``coarse" in the statement 
indicates that $n$-pointed stable curves may have nontrivial automorphisms,
so that to a morphism from $B$ to $\Mgnb$ there may correspond more than one family of 
$n$-pointed stable curves,
or no  family at all.
We refer to \cite{HM} for    details about this topic.

By contrast, as of this writing, in tropical geometry there is no clear understanding of what a good categorical framework for moduli theory could be. Therefore we shall simply view tropical moduli spaces of curves as geometric objects (specifically: topological spaces) whose points are in bijection with isomorphism classes of pointed tropical curves up to tropical equivalence.
Essentially by construction,
the topological structure of our  tropical moduli spaces will reflect the most basic notion of a continuosly varying family of tropical curves.

The results stated in Fact~\ref{mgnb} may be considered a great achievement 
of twentieth century algebraic geometry.
They are due to P. Deligne - D. Mumford \cite{DM}, F. Knudsen \cite{Kn}, and  D. Gieseker \cite{Gie},
with fundamental contributions from other mathematicians (among whom A. Grothendieck and  A. Mayer). 
 Some details about the construction of the moduli space of stable curves will be given at the beginning
of Section~\ref{secteich}.
\end{remark}
\begin{remark}
It is important to keep in mind that the   points $p_1,\ldots,p_n$ of  a stable $n$-pointed curves are always meant to be labeled
(or ordered), as the following example illustrates.
\end{remark}
\begin{example}
\label{04}
Let $g=0$. Then there exist stable curves only if $n\geq 3$,
and if $n=3$ there exists a unique one (up to isomorphisms): $\pr{1}$ with $3$ distinct points,
indeed it is well known that the automorphisms of $\pr{1}$  act transitively on the set of triples of distinct points.
Notice also that   there exist  no nontrivial automorphisms of $\pr{1}$ fixing $3$ points,
therefore 
{\it for every $n\geq 3$ a smooth $n$-pointed curve of genus $0$ has no nontrivial automorphisms.}

The next case, $n=4$, is quite interesting.
We may set the three first points to be $p_1=0,p_2=1,p_3=\infty$ and let the fourth point $p_4$ vary in 
$\pr{1}\smallsetminus \{0,1,\infty\}$. So we have a simple description of $M_{0,4}$ as a nonsingular, rational,
affine curve. Its completion, $\ov{M}_{0,4}$, the moduli space of stable curves, is isomorphic to $\pr{1}$, and the three boundary points are
the three singular $4$-pointed stable rational curves such that $X=C_1\cup C_2$ with $|C_1\cap C_2|=1$
and, of course, $C_i\cong \pr{1}$. What distinguishes the three curves from one another is the distribution of the four points
among the two components. Since each of them must contain two of the four points, and since the two components can be interchanged, we have
  three different non-isomorphic curves corresponding to the three different partitions
$(p_1,p_2|p_3,p_4),\  (p_1,p_3|p_2,p_4),\  (p_1,p_4|p_2,p_3)$.
\end{example}
\begin{example}
If $g=1$ there exist stable curves only if $n\geq 1$, and for $n=1$ we have a $1$-dimensional family of smooth ones, and a unique singular one: an irreducible curve of arithmetic genus $1$ with one node and one marked point.
\end{example}
\subsection{Stably equivalent nodal curves.}
\label{stab} 
We now describe an explicit procedure to construct   the  {\it stabilization}, or {\it stable model}
of a nodal pointed curve. As we mentioned, this is needed to have a separated moduli space
for equivalence classes of nodal curves.
We will do this by a sequence of steps, which  will be useful later on.

Let $(X;p_1,\ldots,p_n)$ be an $n$-pointed nodal curve of genus $g$, with $2g-2+n\geq 1$. Suppose $X$ is not stable.
Then  $X$  has  components  where the degree of  $\omega_X(\sum_1^n p_i)$ is at most $0$.

$\bullet$ {\it Step 1.}
Suppose there exists a component $E$ of $X$ where this degree is negative. Then, by (\ref{degdual}),  $E$
is a smooth rational component  containing none of the $p_i$ and such that
  $|E\cap\sing|=1$ (so, $\deg _E\omega_X(\sum_1^n p_i)=-1$).  These components are called
 unpointed  rational tails.
 We remove $E$ from $X$; this operation does not alter the genus of $X$,  its connectedness 
 (as   $|E\cap \overline{X\smallsetminus E}|=1$),
  the nature of its singularities, or the  number of marked points
  (which remain nonsingular and distinct). On the other hand it may create one new unpointed rational tail,
  in which case we repeat the  operation.
  So, the first step consists in iterating
  this operation until there are no   unpointed  rational tails remaining. It is clear that this process terminates after finitely many steps.
We denote by
  $(X';p_1',\ldots,p_n')$ the resulting $n$-pointed curve, which is unique up to isomorphism.   
  
 $\bullet$   {\it Step 2.}
  Now the degree of $\omega_{X'}(\sum_1^n p_i')$ on every component of $X'$ is at least $0$,
and if $(X';p_1',\ldots,p_n')$ is not stable there exists a  component $E$ on which this degree is $0$.
By (\ref{degdual}) we have $E\cong \pr{1}$ and $E$ contains a total of  $2$ among marked and singular points of $X'$.
There are two cases according to whether $E$ contains a marked point or not
($E$ necessarily contains at least one singular point of $X'$, for $E\subsetneq X'$).

$\bullet$ $\bullet$ {\it Step 2.a.}
Suppose $p'_i\not\in E$ for all $i=1,\ldots, n$. Then   $E$ intersects $\overline{X'\smallsetminus E}$ in $2$ points
(for $X'$ contains no unpointed  rational tail).
Such an $E$ is are called
 exceptional component.
This step consists in contracting each of these exceptional components to a node. Notice that this step does not touch the marked points, nor does it add any
new rational tail.
The resulting curve $(X'';p_1'',\ldots,p_n'')$ is again an $n$-pointed genus $g$ curve free from 
 exceptional components or unpointed rational tails.

$\bullet$ $\bullet$ {\it Step 2.b.}
Suppose  $E$ contains some of the marked points. Since $E$ must intersect $ \overline{X''\smallsetminus E}$,
we have   $|E\cap \overline{X''\smallsetminus E}|=1$ and $E$ contains only one marked point, $p''_1$ say.
These components are called
 uni-pointed  rational tails.
Now we  remove $E$ as in the first step,
but we need to keep track of $p_1''$. To do that we mark
the attaching point $E\cap \overline{X''\smallsetminus E}$, which, after we remove $E$, becomes   
a nonsingular point    replacing   the lost point  $p_1''$.
By iterating this process until there are no  uni-pointed rational tails left,
we arrive at a genus $g$ stable curve $(X''';p_1''',\ldots,p_n''')$.
It is easy to check that this curve  is unique up to isomorphism.

This curve  $(X''';p_1''',\ldots,p_n''')$ is called the stabilization, or stable model, of the given one.

\begin{remark}
\label{stabeq}
Following \cite{HM}, we say that two $n$-pointed  nodal curves are {\it stably equivalent} if they have the same stabilization (always assuming $2g-2+n\geq 1$). 
It is easy to check that two stable curves are stably equivalent only if they are isomorphic.
So, in every stable equivalence class of $n$-pointed curves there is a unique stable representative.

By Fact~\ref{mgnb}, 
there exists a projective   variety, $\Mgnb$, parametrizing stable equivalence classes of $n$-pointed
nodal curves of genus $g$.
\end{remark}
\begin{remark}
\label{ind}
There are many good reasons for  
extending our field of interest from curves to pointed curves. Here is a basic  and useful   one:

{\it Stability of  pointed curves is preserved under normalization}.

More precisely,
let  $(X;p_1,\ldots,p_n)$ be an $n$-pointed stable curve of genus $g$.
Pick a node $q\in \sing$, let $\nu_q: X^{\nu}_q\to X$  be the  normalization at $q$; let $q_1,q_2\in X^{\nu}_q$
be the two branches of $q$, and abuse notation by setting $p_i=\nu_q^{-1}(p_i)$ for $i=1,\ldots,n$.
Then one easily checks that the $(n+2)$-pointed curve $(X^{\nu}_q;p_1,\ldots,p_n,q_1,q_2)$ is either   stable of genus $g-1$
(if $q$ is not a separating node of $X$), or the disjoint union of two pointed stable curves
of genera summing to $g$
(if $q$ is a separating node). 
\end{remark}

  \section{Graphs and pure tropical curves }
  \label{trpsec}
\subsection{Graphs}
\label{contr}
By a {\it topological graph}, or simply a graph, we mean  a one dimensional finite simplicial (or CW) complex $\Gamma$;
we denote by $V(\Gamma)$
 the set of its vertices (or $0$-cells)  and by   $E(\Gamma)$ the set of its edges (or $1$-cells). 
 To every $e\in E(\Gamma)$ one associates the  
  pair $\{v,v'\}$ of possibly equal vertices which form the boundary of $e$; we call $v$ and $v'$ the {\it endpoints} of $e$.
 If $v=v'$ we say that $e$ is a {\it loop-edge} based at $v$.

 The {\it valence} of a vertex $v$ is the number of edges having $v$ as endpoint, with the convention that a loop 
  based at $v$ be counted  twice.
 
 For a fixed integer $p$, 
 we say that a vertex of valence $p$ is {\it $p$-valent}, and 
  we say that a graph  is 
  {\it $p$-regular} if all of its vertices are $p$-valent.
  
 The genus  $g(\Gamma)$ of $\Gamma $ is   its first Betti number
$$
g(\Gamma)=b_1(\G):={ \operatorname{rk}} _{\Z}H_1(\G, \Z) =|E(\Gamma)| -|V(\Gamma)|  +c
$$
where $c$ is the number of connected components of $\Gamma$. 
Morphisms between topological graphs are, as usual,  cellular maps.

We also need to define graphs in purely combinatorial terms, for which there are various possibilities.
Our definition,   almost   the same as in \cite{gac}, is most convenient to
simultaneously treat tropical and algebraic curves. 
\begin{defi}
A  {\it combinatorial graph}  $\Gamma$ with $n$ {\it legs} is the following set of data:
\begin{enumerate}
\item
A finite non-empty set $V (\Gamma)$, the set of {\it vertices}.
  \item
   A finite set $H(\Gamma)$, the set of {\it half-edges}.
  \item 
An involution 
   $$
\iota: H(\Gamma)\la H(\Gamma)\  \  \  \  \  h\mapsto \overline{h} 
 $$ 
 with  $n$ fixed points, called   {\it legs},  whose set  is denoted by $L(\Gamma)$.
   \item
An {\it endpoint} map $\epsilon:H(\Gamma)\to V(\Gamma)$.
 \end{enumerate}

 A pair $e=\{h ,  \overline{h}\}$ of distinct elements   in $H(\Gamma)$ interchanged by  the involution is called
an {\it edge} of the graph; the set of edges is denoted by $E(\Gamma)$.
 If $\epsilon(h)=v$ we say that $h$, or $e$, is {\it adjacent} to $v$.

The {\it valence} of a vertex   $v$ is the number $|\epsilon^{-1}(v)|$ of half-edges adjacent to $v$.

An edge adjacent to a vertex of valence $1$ is called a {\it leaf}.

An edge whose endpoints coincide is called a {\it loop-edge}.

Two legs are called {\it disjoint} if their endpoints are distinct.
 \end{defi}
 \begin{defi}
 \label{combg}
 A morphism $\alpha$ between combinatorial graphs  $\Gamma$ and $\Gamma'$
 is a map $\alpha:V(\Gamma)\cup H(\Gamma)\to V(\Gamma')\cup H(\Gamma')$
 such that $\alpha(L(\Gamma))\subset L(\Gamma')$, and such that
the two diagrams below are commutative.
 \begin{equation}\label{diag1}
\xymatrix{
V(\Gamma)\cup H(\Gamma) \ar@{^{}->} ^{\alpha}[r]  \ar@{->}^{(id_V,\epsilon)}[d] & V(\Gamma')\cup H(\Gamma')\ar@{^{}->}^{(id_{V'},\epsilon')}[d] \\
V(\Gamma)\cup H(\Gamma) \ar@{^{}->}^{\alpha}[r] & V(\Gamma')\cup H(\Gamma')
}
\end{equation}
In particular,  $\alpha(V(\Gamma))\subset V(\Gamma')$. Next
\begin{equation}\label{diag2}
\xymatrix{
V(\Gamma)\cup H(\Gamma) \ar@{^{}->} ^{\alpha}[r]  \ar@{->}^{(id_V,\iota)}[d] & V(\Gamma')\cup H(\Gamma')\ar@{^{}->}^{(id_{V'},\iota')}[d] \\
V(\Gamma)\cup H(\Gamma) \ar@{^{}->}^{\alpha}[r] & V(\Gamma')\cup H(\Gamma')
}
\end{equation}
We say that a morphism $\alpha$ as above is an {\it isomorphism} if $\alpha$ induces, by restriction,
three bijections   
$\alpha_V:V(\Gamma)\to V(\Gamma')$, $\alpha_E:E(\Gamma)\to E(\Gamma')$ and $\alpha_L:L(\Gamma)\to L(\Gamma')$.

An automorphism of $\G$ is an isomorphism of $\G$ with itself.
 \end{defi}
\begin{remark}
The image of
an edge $e\in E(\Gamma)$ is either an edge, or a vertex $v'$ of $\Gamma'$;
in the latter case the endpoints of $e$ 
are also also mapped to $v'$, and we say that $e$ is contracted by $\alpha$.
\end{remark}
A  trivial  example of morphism is the map  forgetting some legs.
\begin{example}
\label{invertloop}
Let $e=\{h ,  \overline{h}\}$ be a loop-edge of $\G$. Then $\G$ has a ``loop-inversion" automorphism,
  exchanging $h$ and $ \overline{h}$ and fixing everything else.
\end{example}

It is clear that to every topological    graph we can associate a unique combinatorial graph
with no legs.
 
Conversely, to every combinatorial graph   with vertex set $V$ and edge set $E$,  we can associate  a unique topological graph, as follows.
We take $V$ as the set of  $0$-cells; then we add a $1$-cell for every $e=\{ h, \overline{h} \}\in E$,
in such a way that the boundary of this $1$-cell is $\{ \epsilon(h), \epsilon(\overline{h})\}$.

Now, if the combinatorial graph has a non empty set of legs $L$, we add to the topological graph associated to it 
a $1$-cell for every $h\in L$ in such a way that one extreme of the $1$-cell contains $\epsilon(h)$ in its closure.
The topological space we obtain in this way will be called a  topological  graph with $n$ legs.
So, legs are $1$-cells only one end of which is adjacent to a vertex (which will be called the endpoint of the leg).
 From now on we shall freely switch between the combinatorial and topological structure on graphs, intermixing the two points of view without mention.
The notions of valence, genus, endpoints, $p$-regularity and so on, can be given in each setting and coincide.

\begin{remark}
From now on, we shall  assume our graphs to be connected, unless we specify otherwise.
\end{remark}

We now describe a type of morphism between  graphs,   named  {\it (edge)  contraction},   which will play an important role. We do that for topological graphs, leaving the translation for combinatorial graphs to    the reader.

Let $\Gamma$ be a topological graph and
   $e\in E(\Gamma)$ be an edge. Let 
$\Gamma_{/e}$ be the graph obtained by contracting $e$ to a point and leaving everything else unchanged  \cite[sect I.1.7]{Die}. Then there is a natural continuous surjective map
  $\Gamma \to \Gamma _{/e}$, called    the {\it contraction} of $e$.
  More generally, if $S\subset E(\Gamma)$ is a set of edges, we denote by $\GS$ the contraction of every edge in $S$ and denote by  $\sigma:\Gamma\to \GS$ the associated map.
  Let $T:=E(\Gamma)\smallsetminus S$. Then there is a natural identification between
  $E(\GS)$ and $T$. Moreover the map $\sigma$ induces a bijection between the sets of legs $L(\Gamma)$ and $L(\GS)$, and a surjection
  $$
  \sigma_V:V(\Gamma)\la V(\GS);\  \  \  v\mapsto \sigma(v).
  $$
  Let us disregard the legs, as they play essentially no role in what we are going to describe.
Notice that every connected component of 
  $\Gamma - T$ 
  (the graph obtained from $\Gamma$ by  removing every edge in $T$)
  gets contracted to a vertex of $\GS$; conversely, for every vertex $\ov{v} $ of $\GS$ its preimage $\sigma^{{-1}}(\ov{v})\subset \Gamma$ is a connected component of  $\Gamma - T$.
  In particular, we have
  \begin{equation}
  \label{gendec}
b_1(\Gamma- T)=\sum_{\ov{v}\in V(\GS)}b_1(\sigma^{{-1}}(\ov{v})).
\end{equation}

Let $\sigma:\Gamma\to \GS$ be the contraction of $S$ as above.
Then
\begin{equation}
 \label{lme} b_1(\Gamma)=b_1(\GS)+b_1(\Gamma- T). 
 \end{equation}

\subsection{Tropical curves}
 \label{tropeq}
 
A {\it metric  graph}  is a leg-free graph $\Gamma$
 endowed with the structure of a metric space, so that every edge is locally isometric to an interval in $\R$, and where the distance between two points is the  shortest length of an edge-path joining them.
In particular, on a metric graph we have a length function 
$$
\ell:E(\Gamma)\to \R_{> 0}
$$
mapping an edge to the distance between its endpoints. It is clear, conversely, that the datum of such an $\ell$
determines on $\Gamma$ the structure of a metric space with every edge $e$ having length $\ell(e)$.

If $\Gamma$ has legs, it is convenient to extend the function $\ell$ to the legs of $\Gamma$, by setting
$\ell(x)=\infty$ for every $x\in L(\Gamma)$.

The genus of a metric graph is the genus of the underlying topological graph.

\

 We shall now define tropical curves   following G. Mikhalkin.
 An abstract (pure\footnote{A explained in the introduction, ``pure" is added for reasons that will be clear later.}) tropical curve   is   almost the same as a metric graph; see 
 \cite[Prop. 5.1]{MIK3} or \cite[Prop. 3.6]{MZ}.
 The difference is in the length of those edges adjacent to vertices of valence $1$,
i.e. the  leaves; the length of a leaf  is set to be equal to $\infty$ for a tropical curve, whereas for a metric graph is
 finite.

 \begin{defi}
 \label{tropc}
A {\it (pure) tropical curve} of genus $g$ is a pair $(\Gamma, \ell)$ where $\Gamma$ is a 
leg-free graph
of genus $g$ 
and $\ell$ a length function  on the edges $\ell:E(\Gamma)\to \R_{> 0}\cup\{ \infty\}$
 such that $\ell(e)=\infty$ if and only if $e$ is a leaf.  

Two tropical curves are {\it (tropically) equivalent} if they can be obtained  from one another by adding or removing  vertices of valence $2$, or   vertices of valence $1$ together with their adjacent leaf.
\end{defi}

 \begin{remark}
 \label{metsp}
A tropical curve is not a metric space, as  the distance from a $1$-valent vertex to another point   is not defined. 
Of course, if  all $1$-valent vertices are removed what remains is a metric space.
We shall see in \ref{rep3} that equivalence classes of tropical curves are bijectively parametrized by certain metric graphs.
 \end{remark}
   Let us illustrate tropical equivalence in details with some pictures.

\begin{enumerate}[{\bf (1)}]
\item
{\it Addition/removal  of a vertex of valence $1$ and of of its adjacent edge (a leaf).}
The next picture illustrates the removal of the one-valent vertex $u_0$ 
and of the leaf $e_0$ adjacent to it.
\begin{figure}[!htp]
\label{}
$$\xymatrix@=1pc{
&&& *{\bullet}\ar@{.}[d]\ar@{-}[rr]^>{u}^<{v} ^<(0.5){e_1} &&*{\bullet}\ar@{-}[d]_{e_0}^>(1.1){u_0}    \ar@{-}[rr]^>{w}^<(0.4){e_2} &&*{\bullet}\ar@{.}[d]&&
\stackrel{}{\la} &
& &&*{\bullet}\ar@{.}[d]\ar@{-}[rr]^>{u}^<{v} ^<(0.5){e_1} &&*{\bullet}    \ar@{-}[rr]^>{w}^<(0.4){e_2} &&
*{\bullet}\ar@{.}[d]\\
&&&&&*{\bullet}&&&&&&&&&&&&&&&&&&&&&&&&&&&&&&\\
}$$
\caption{Removal  of the   $1$-valent vertex $u_0$ and of its adjacent edge $e_0$.}
\end{figure}
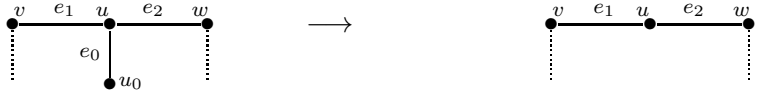
The opposite move is the addition of a leaf, where the length of the added edge $e_0$ is  set equal to $\infty$.
\item
{\it Addition/removal of a vertex of valence $2$.}
Pick an edge  $e\in E(\Gamma)$  and denote by $v,w\in V(\Gamma)$ its endpoints. We can add a vertex $u$ in the interior of $e$.
This move replaces the edge $e$ of length $\ell(e)$ by two edges $e_v$ 
(with endpoints $v,u$) and $e_w$ (with endpoints $w,u$),
whose lengths satisfy $\ell(e)=\ell(e_v)+\ell(e_w)$.
If one of the two endpoints of $e$, say $v$,  has valence $1$ we set the length of $e_v$ equal to $\infty$, whereas the length of $e_w$ can be arbitrary.
The opposite procedure, which should    be clear, is represented in the figure~\ref{add2} below:
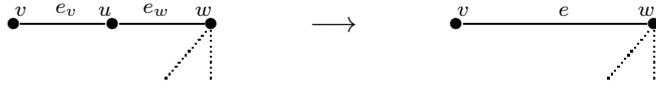
\begin{figure}[!htp]
\label{add2}
$$\xymatrix@=1pc{
&&&*{\bullet}\ar@{-}[rr]^>{u}^<{v} ^<(0.5){e_v} &&*{\bullet}   \ar@{-}[rr]^>{w}^<(0.4){e_w} &&*{\bullet}\ar@{.}[d]\ar@{.}[dl] &&
\stackrel{}{\la} && 
*{\bullet}\ar@{-}[rrrr]^>{w}^<{v}&&&&*{\bullet}\ar@{.}[dl]\ar@{.}[d]  \ar@{-}[l]_>(2.2){e}\\
&&&&&&&&&&&&&&&&&&&&&&&&
}$$
\caption{ Removal of a vertex of valence $2$.}
\end{figure}
\end{enumerate}

\subsection{Pointed tropical curves}
Points on a tropical curve are conveniently represented as legs on the corresponding graph.
Indeed, let $C$ be a tropical curve and $p\in C$; if $p$ is a vertex we add a leg based at $p$, if $p$ is in the interior of an edge, we add a  vertex at $p$ and a leg based at it; since the added vertex is $2$-valent, this operation does not change the equivalence class of $C$.   
 \begin{defi}
 \label{troppc}
An {\it $n$-pointed (pure) tropical curve} of genus $g$ is a pair $(\Gamma, \ell)$ where $\Gamma$ is a combinatorial graph
of genus $g$ with a set $L(\Gamma)=\{x_1,\ldots, x_n\}$ of  legs, 
and $\ell$ a length function
$$
\ell:E(\Gamma)\cup L(\Gamma)\la \R_{> 0}\cup\infty
$$
such that $\ell(x)=\infty$ if and only if $x$ is a leaf or a leg.

The legs $\{x_1,\ldots, x_n\}$ are called {\it (marked) points} of the curve.

We say that two pointed tropical curves   are {\it (tropically) equivalent}  if they can be obtained one from the other by adding or removing  vertices of valence $2$ or  vertices of valence $1$  with their adjacent leaves.

We say that two tropical curves  $(\Gamma, \ell)$
and $(\Gamma', \ell')$  
with $n$ marked points  $L(\Gamma)=\{x_1,\ldots, x_n\}$ and $L(\Gamma')=\{x_1',\ldots, x_n'\}$
are {\it isomorphic} if there exists an isomorphism $\alpha$ from $\G$ to $\G'$
as defined in \ref{combg},  such that $\ell(e)=\ell'(\alpha(e))$ for every $e\in E(\G)$, and
$\alpha(x_i)=x'_i$ for every $i=1,\ldots, n$  .
\end{defi}
\begin{remark}
Just as for  stable curves, marked points are always  labeled.
\end{remark}
Two  pointed tropical curves are equivalent
if they can  be obtained  from one another by a finite sequence of the two moves described in the pictures of subsection~\ref{tropeq}.
Notice that now in move (2) we allow adding  $2$-valent vertices in the interior of a leg, as well as removing the endpoint $v$ of a leg if $v$ has valence $2$,
but we do not allow adding or removing legs.

\begin{remark}
\label{addl}
Tropical equivalence 
preserves the number of marked points, but not  their being disjoint, as the next picture shows.
\end{remark}
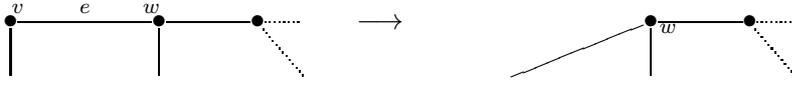
\begin{figure}[!htp]
$$\xymatrix@=1pc{
&*{\bullet}\ar@{-}[d]\ar@{-}[rrr]^>{w}^<{v}^(0,5){e}&&&*{\bullet}\ar@{-}[d]  \ar@{-}[rr]&&*{\bullet}\ar@{.}[dr]\ar@{.}[r]&&
\stackrel{}{\la} &
& &&  &*{\bullet}\ar@{-}[d]^<{w} \ar@{-}[dlll]\ar@{-}[rr]&&*{\bullet}\ar@{.}[dr]\ar@{.}[r]&&\\
&&&&&&&&&&&&&&&&&&&&&&&&
}$$
\caption{After removing  the $2$-valent vertex $v$  the two legs are no longer disjoint}
\end{figure}

\begin{example}
\label{0triv}
Let $\Gamma$   have one vertex and no edges or legs. Then $\Gamma$ is an unpointed tropical curve
of genus $0$,
equivalent to any tropical curve of genus $0$.
From the moduli point of view, its equivalence class is viewed as a trivial one, and will be excluded
in future considerations. 

Let now $\Gamma$ be a graph with one vertex, $v$, and one loop attached to it;
so $\Gamma$ is an unpointed tropical curve
of genus $1$. Now $v$ is $2$-valent, hence can be removed, leaving us with something which is not
a tropical curve. For this reason, these curves are viewed as degenerate, and will also be excluded.
Hence if $g=1$ we shall always assume $n\geq 1$.

By a similar reasoning, if $g=0$ we shall always assume $n\geq 3$. This motivates the future assumption
$2g-2+n\geq 1$.
\end{example}
\begin{prop}
\label{rep3} Assume $2g-2+n\geq 1$.
\begin{enumerate}
\item
\label{rep3a}
Every equivalence class of $n$-pointed pure tropical curve contains a representative whose $n$ marked points are disjoint.
\item
\label{rep3b}
The set of equivalence classes of $n$-pointed pure tropical curves of genus $g$ 
is in bijection
with the set of metric graphs of genus $g$ with $n$ legs having no vertex of valence less than $3$.
\end{enumerate}
\end{prop}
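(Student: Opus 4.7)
For part (\ref{rep3a}), the obstruction to disjointness is a vertex $v$ carrying two or more legs. I plan to exploit the extension of move (2) articulated right after Definition~\ref{troppc}, which permits inserting a $2$-valent vertex in the interior of a leg. If $v$ has legs $x_{i_1},\ldots,x_{i_k}$ with $k\ge 2$, I would subdivide each of $x_{i_2},\ldots,x_{i_k}$ by inserting a new $2$-valent vertex $u_{i_j}$ in the interior of the corresponding leg, assigning the resulting new edge from $v$ to $u_{i_j}$ any finite positive length. After the subdivision, $x_{i_j}$ is based at $u_{i_j}$ for $j\ge 2$ while $x_{i_1}$ remains at $v$, so the legs originally at $v$ become pairwise disjoint. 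Performing this simultaneously for every overloaded vertex yields, in finitely many moves, a representative with all $n$ legs pairwise disjoint.

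For part (\ref{rep3b}), I would construct the bijection by means of a \emph{stabilization} procedure: iteratively strip off every vertex of valence $<3$ via the allowed moves. Concretely, this means removing each $1$-valent vertex together with its adjacent leaf via move (1), and removing each $2$-valent vertex via move (2), which either fuses the two incident edges or, if one adjacent half-edge is a leg, slides the leg onto the neighboring vertex as illustrated in the figure following Remark~\ref{addl}. Each such removal strictly decreases $|V(\Gamma)|+|E(\Gamma)|$, so the procedure terminates. The only configurations the allowed moves cannot simplify --- a $1$-valent vertex whose only adjacent half-edge is a leg, or a $2$-valent vertex with two legs and no incident edge --- would force the underlying connected graph to consist of that single vertex together with its legs, corresponding to $(g,n)=(0,1)$ or $(0,2)$, both excluded by the assumption $2g-2+n\ge1$. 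Thus the procedure always outputs a metric graph of genus $g$ with $n$ legs and minimum valence $\ge 3$.

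To promote this construction to a bijection, the essential point is that the stabilization depends only on the equivalence class of $(\Gamma,\ell)$. It suffices to check invariance under a single application of each of the two moves, which in both cases is immediate: applying a move and then stabilizing produces the same graph as stabilizing directly, since the move either inserts a vertex the stabilization process would subsequently delete or deletes one it would have deleted anyway. Since any tropical curve is equivalent to its stabilization, two curves with isomorphic stabilizations are equivalent (injectivity), and any metric graph with no vertex of valence $<3$ is already stable and therefore realized as its own stabilization (surjectivity). The step I expect to require the most care is verifying confluence --- that the resulting stable metric graph is independent of the order in which valence-$<3$ vertices are removed --- particularly in the delicate case where a $2$-valent vertex carries a leg and its removal slides the leg onto a neighbor which may then itself become a candidate for removal.
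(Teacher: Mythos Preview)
Your proposal is correct and follows essentially the same route as the paper: part (\ref{rep3a}) by subdividing all but one leg at each overloaded vertex, and part (\ref{rep3b}) by a stabilization procedure that removes valence-$1$ vertices with their leaves and then eliminates valence-$2$ vertices (distinguishing the leg/no-leg cases exactly as the paper's Steps 1, 2a, 2b). Your treatment of the bijection is actually more explicit than the paper's, which simply declares uniqueness ``trivial to prove''; your attention to confluence and to invariance under single moves is a reasonable way to cash that out. One minor omission: among the configurations the moves cannot simplify you should also list a $2$-valent vertex carrying a loop (both half-edges from the same edge), which forces $(g,n)=(1,0)$ and is likewise excluded by $2g-2+n\ge 1$.
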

\begin{proof}
Let us prove (\ref{rep3a}). Pick a representative  $(\Gamma,\ell)$ such that
the legs of $\Gamma$ are not disjoint.
Hence   there is a vertex $v\in V(\Gamma)$ which is the endpoint of $m\geq 2$ legs,
$x_1,\ldots, x_m$. We add a vertex $v'_i$ (of valence $2$) in the interior of $x_i$ for all $i=2,\ldots, m$;
this operation does not    change the equivalence class. 
This creates, for $i=1,\ldots, m$, a new edge $e'_i$  and a leg $x_i'$ adjacent to $v'_i$. In this way we obtain a new graph $\Gamma'$ whose legs $x_1,x'_2,\ldots, x'_m$ are disjoint. Of course $\Gamma$ and $\Gamma'$ are tropically equivalent.
After repeating this process finitely many times we arrive at a graph with disjoint legs, tropically equivalent to the original one.

Now part (\ref{rep3b}).
Let $(\Gamma,\ell)$ be a graph with $n$ 
legs, i.e.
an $n$-pointed tropical curve.
 We shall describe a construction to produce, in the same equivalence class of the given curve,
a metric graph  with $n$ legs having no vertex of valence less than $3$.
This construction is entirely similar to the construction described in subsection~\ref{stab} for algebraic curves.
We itemize it so as to highlight the analogies.

$*$ {\it Step 1.}
Suppose $\Gamma$ has some $1$-valent vertex. Then we remove it, together with its leaf;
by definition this operation does not change the equivalence class.
We thus get a new  graph $\Gamma^*$, with a natural  inclusion  $E(\Gamma^*)\subset E(\Gamma)$ and a natural identification $L(\Gamma^*)= L(\Gamma)$.
We define the length function $\ell^*$ on $E(\Gamma^*)\cup L(\Gamma^*)$ by restricting $\ell$.
Hence $(\Gamma^*,\ell^*)$ is an $n$-pointed curve equivalent to the given one.
We can obviously iterate the above procedure until there are no $1$-valent vertices  left.
We denote by  $(\Gamma',\ell')$ the $n$-pointed tropical curve obtained at the end.

$*$ {\it Step 2.}
If $\Gamma'$ has no vertex of valence $2$ we are done.
So, let $v$ be a vertex of valence $2$ of $\Gamma'$.

$**$ {\it Step 2.a.}
If $v$ has no leg based at it, then we remove $v$ as described in subsection~\ref{tropeq}.
(i.e. in such a way that the two edges  adjacent to $v$ are merged into an  edge of length equal to the sum of their lengths). It is clear that this operation does not add any new leaf
or leg, and diminishes the number of these $2$-valent vertices. So we can repeat it finitely many times until
there are no  such  $2$-valent vertices left. The resulting tropical curve, $(\Gamma'',\ell'')$,
has $n$ 
points and it is tropically equivalent to the given one.

$**$ {\it Step 2b.}
If $v$ has a (necessarily unique) leg $l$ based at it, let $e$ be the (also unique) edge adjacent to $v$.
Now by removing $v$ the edge  $e$ changes into a leg, merging   with $l$ (see the figure in Remark~\ref{addl}).  
This operation preserves the equivalence class, the number of legs, and does not add any new leaf; it clearly diminishes
the number of $2$-valent vertices. So after finitely many iterations we arrive at a tropical curve, $(\Gamma''',\ell''')$ with $n$ 
points and such that the graph has no vertex of valence less than $3$.

We have thus shown that every tropical equivalence class of $n$-pointed tropical curves  has a representative
all of whose vertices have valence at least $3$. 
The uniqueness of  this representative is    trivial
to prove.
\end{proof}

\begin{remark}
\label{staban}
Recall from Section~\ref{stab} that two nodal algebraic curves are  stably equivalent  if they have isomorphic stabilizations.

{\it To say that two $n$-pointed tropical curves are tropically equivalent is analogous to say that two $n$-pointed algebraic curves are stably equivalent.}

This should be clear by comparing the construction of \ref{stab} with the proof of   Proposition~\ref{rep3}.
Indeed: (Step 1) the   removal  of a $1$-valent vertex and its adjacent edge corresponds to  
 removing an unpointed rational tail.  Next
(Step 2.a) the removal of a $2$-valent vertex 
adjacent to no legs
corresponds to contracting  an exceptional component to a node. 
 Finally (Step 2.b) the removal of a $2$-valent vertex   having an adjacent leg  corresponds to
 removing a uni-pointed rational tail.
    \end{remark}

\begin{example}
\label{04g}
Let $g=0$ and $n=4$.
A   graph  with $4$ legs and no vertex of valence $\leq 2$ can have at most $1$ edge.
The graph with $0$ edges is unique.
On the other hand there are three
non-isomorphic combinatorial graphs with one edge,
according to how the $4$ legs are distributed.
They are  drawn in the following picture.
Each of these graphs supports a one dimensional family of metric graphs, as the length of their unique edge 
varies in $\R_{>0}$.
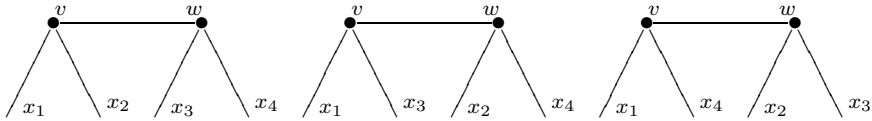
\begin{figure}[!htp]
$$\xymatrix@=1pc{
& *{\bullet}\ar@{-}[ddl]^(0,8){x_1}\ar@{-}[ddr]^(0,9){x_2}\ar@{-}[rrr]^>{w}^<{v}&&&*{\bullet}\ar@{-}[ddl]^(0,8){x_3}\ar@{-}[ddr]^(0,9){x_4}&&&
 *{\bullet}\ar@{-}[ddl]^(0,8){x_1}\ar@{-}[ddr]^(0,9){x_3}\ar@{-}[rrr]^>{w}^<{v}&&&*{\bullet}\ar@{-}[ddl]^(0,8){x_2}\ar@{-}[ddr]^(0,9){x_4}&&&
  *{\bullet}\ar@{-}[ddl]^(0,8){x_1}\ar@{-}[ddr]^(0,9){x_4}\ar@{-}[rrr]^>{w}^<{v}&&&*{\bullet}\ar@{-}[ddl]^(0,8){x_2}\ar@{-}[ddr]^(0,9){x_3}
\\
&&&&&&&&&&&&&&&&&&&&&&&&
\\
&&&&&&&&&&&&&&&&&&&&&&&&
}$$
\caption{The three genus $0$, $3$-regular graphs with  $4$ legs.}
\end{figure}
\end{example}
\begin{example}
{\it Rational tropical curves.}
A detailed description of the case $g=0$ and $n\geq 4$, with nice pictures, can be found  in \cite{MIK5}.

This case is special    for two reasons.
First: it is easy to see that an $n$-pointed rational pointed curve free from vertices of valence $\leq 2$
has no  nontrivial automorphisms. This is the key reason why
the  moduli space for these curves is a tropical variety.
As we saw  Example~\ref{04}, this   happens also for algebraic curves   and, as a consequence, the moduli spaces 
$\ov{M}_{0,n}$ are smooth varieties (see \cite{Kn}), whereas $\Mgnb$  is singular in general.
As we said in the introduction, this discrepancy seems to have its tropical  analogue in the fact that
moduli spaces of tropical curves of higher genus are not  tropical varieties in general.

The second reason why the genus zero  case is easier to handle is that  a family of genus zero  tropical  curves specializes to a genus zero curve. 
This is not the case if $g\geq 1$ as we are going to explain.
\end{example}
\subsection{Adding a weight function on tropical curves}
\label{tropsec}

The definition of pure tropical curve presents a problem when   studying families.
To explain why, let us first identify tropical curves with metric graphs having no vertex of valence less than $3$ (which, up to tropical equivalence, we can do).
Now, by varying the lengths of the edges of a metric graph we obtain
a   family.
  Let us make this precise;
 fix   a graph $ \Gamma$ as above, write $E(\Gamma)=\{e_1,\ldots, e_{|E(\Gamma)|}\}$,
 and   consider the space 
of all
metric graphs,  i.e. of all tropical curves, supported on it.
This space is easily identified with $\R^{E(\Gamma)}_{> 0}$, 
indeed,     to a vector 
$$(l_1,\ldots, l_{|E(\Gamma)|})\in \R^{E(\Gamma)}_{> 0}$$
there  corresponds  the tropical curve $(\Gamma,\ell)$ such that   $\ell(e_i)=l_i$, $\forall i$.
It is then natural to ask what happens when some of the lengths go to zero.
So, with the above notation, 
 let  $l_1$ say, tend to $0$. 
 How do we give the limit an interpretation  in tropical, or geometric, language?
There is  a simple candidate:
as $l_1$ tends to zero,   $(\Gamma,\ell)$ specializes to a metric graph 
$(\ov{\Gamma},\ov{\ell})$ where $\ov{\Gamma}=\Gamma_{/e_{1}}$ is   obtained    by contracting $e_1$ to a point,  and $\ov{\ell}(e_i)=\ell(e_i)$, \  $\forall i\geq 2$
(as 
$E(\ov{\Gamma})=E(\Gamma)\smallsetminus \{e_1\}=\{e_2,\ldots, e_{|E(\Gamma)|}\}$).
But there is a drawback with this limit: its genus may be smaller than $g(\Gamma)$.
Indeed we have 
$$g(\ov{\Gamma})=\begin{cases}
g(\Gamma)-1 & \text{ if $e_1$ is   a loop}   \\
g(\Gamma) & \text{ otherwise.}
\end{cases}
$$
From a geometric perspective this is quite unpleasant.
We like the genus to remain constant under specialization.
A solution to this problem is provided by S. Brannetti, M. Melo and F. Viviani in \cite{BMV}. The idea  is to extend the definition of a tropical curve by adding    a weight function on the   vertices. 

\begin{defi}
A {\it weighted graph} 
with $n$ legs is a pair  $(\Gamma, w)$
where $\Gamma$ is a graph 
with $n$ legs, and 
$w:V(\Gamma)\to \Z_{\geq 0}$ a {\it weight} function
on the vertices.

The genus $g(\Gamma, w)$    is    defined as follows:
\begin{equation}
\label{gw}
g(\Gamma, w)=b_1(\Gamma)+\sum_{v\in V(\Gamma)}w(v)=b_1(\Gamma)+|w|.
\end{equation}
A {\it weighted metric graph} is defined exactly in the same way, assuming that $\Gamma$ is a metric graph to start with.
  \end{defi}

 \begin{defi}
\label{cont}
Let $S\subset E(\Gamma)$ be a set of edges of a weighted graph $(\Gamma,w)$.
Using the notations of subsection~\ref{contr}
we     define the
 {\it weighted contraction}   of $S$ as
  the weighted graph $(\Gamma_{/S},w_{/S})$
 where 
  $\GS$  is the  contraction   of   $S$.
 The weight function $\wS$ is defined by setting, for every $\ov{v}\in V(\GS)$,
 \begin{equation}
\label{wS}
 \wS(\ov{v})=b_1(\sigma^{-1}(\ov{v}))+\sum_{v\in \sigma_V^{-1}(\ov{v})}w(v).
  \end{equation}
   \end{defi}
 \begin{example}
Let $S=\{ e\}$. If $e$ is a loop    based at a vertex   $v_0$, 
 let $\ov{v_0}\in  V(\GS)$  be the image of $v_0$, hence the image of  the contracted loop
  $e$. Then  $ \wS(\ov{v_0})=w(v_0)+1$. Whereas  for 
   $\ov{v}\in V(\GS)$ with $\ov{v}\neq \ov{v_0}$ we have 
  $ \wS(\ov{v})=w(v)$.
  If $e$ is not a loop, then  $\wS(\ov{v})=\sum_{v\in \sigma_V^{-1}(\ov{v})}w(v)$
  for every $\ov{v}\in V(\GS)$.
   \end{example} 
    \begin{example}
     \label{contfig}
     In the next picture we have two weighted contractions; the starting graph $(\G,w)$ has all vertices of weight zero,  represented by a ``$\circ$", so that $\G$ has genus 3.
We first contract the non-loop edge $e_1$, so that the weighted contraction has again weight function equal to zero.
Then we contract a loop edge, so that the   weighted contraction has  one vertex  of weight 1, represented by 
 a ``$\bullet$".
 \begin{figure}[h]
\begin{equation*}
\xymatrix@=.5pc{
(\Gamma,w) = &&\ar@{-}@(ul,dl)*{\circ}\ar @{-} @/_.9pc/[rr]  \ar@{-} @/^.9pc/[rr]^{e_1}
&&*{\circ}\ar@{-}@(ur,dr)^{e_2} &&&\ar @{>}[rr] &&& 
&*{\circ} \ar@{-}@(ul,dl)\ar@{-}@(ur,dr)^{e_2}\ar@{-}@(ld,rd)&&&\ar @{>}[rr] &&(\Gamma_{/e_1,e_2},w_{/e_1,e_2}) = &&*{\bullet} \ar@{-}@(ul,dl)\ar@{-}@(ld,rd)
&&&\\
}
\end{equation*}
\end{figure}
   \end{example}

\begin{remark}
\label{genconst}
By the identities (\ref{gendec}) and (\ref{lme}), we have
$$
g(\GS, \wS)=g(\Gamma, w).
$$
\end{remark}
Let $(\Gamma' ,w')$ be a weighted graph.
 We denote
\begin{equation}
\label{min}
(\Gamma,w) \geq (\Gamma' ,w')\  {\text{ if }} \  (\Gamma' ,w'){\text{ is a weighted contraction of  }} (\Gamma,w).
\end{equation}

The next definition generalizes  \cite[Def 3.1.3]{BMV} (with   changes in terminology). 

\begin{defi}
 \label{tropcw}
An {\it  $n$-pointed (weighted) tropical curve} of genus $g$ is a triple  $(\Gamma, w, \ell)$ where $(\Gamma,w)$ is a weighted graph
of genus $g$ with  
legs   $L(\Gamma)=\{x_1,\ldots, x_n\}$, 
$\ell$ a function  
$\ell:E(\Gamma)\cup L(\Gamma) \to \R_{> 0}\cup\{\infty \}$
 such that $\ell(x)=\infty$ if and only if $x$ is a    leg or   an edge adjacent to $1$-valent vertex of weight $0$.

(As in Definition~\ref{troppc}, the legs $\{x_1,\ldots, x_n\}$ are the    marked points.)

If  $w(v)=0$ for every $v\in V(\Gamma)$, we write $w=\mo$ and
$(\Gamma, \underline{0},\ell)$,
 and say that the tropical curve is
{\emph{pure}}, consistently with Definition~\ref{troppc}.

A tropical curve is called {\emph{regular}} if it is pure and if $\Gamma$ is a 3-regular graph
(every vertex has valence $3$).

Two $n$-pointed tropical curves are tropically equivalent if they can be obtained  from one another by adding or removing  $2$-valent vertices of weight $0$, 
or   $1$-valent vertices of weight $0$  together with their adjacent edge.
\end{defi}

The terminology ``pure'' tropical curve is useful to keep track of the original definition, to
connect to the rest of the literature.
The notion of regular tropical  curve, together with referring to the $3$-regularity of its graph,   
suggests   that, in our view, pure tropical curves with a 3-regular graph play the role
of regular (i.e. smooth) curves in the  moduli theory of algebraic curves; in fact,
as we shall see, regular tropical curves have  the following property.

{\it If a tropical curve $C$ specializes to a regular one, then $C$ itself is regular.}

The same holds for nonsingular algebraic curves: if an algebraic curve $X$ specializes to a nonsingular one, then $X$  is nonsingular. See Theorem~\ref{corr}
for more on this point.

Observe, however, that in Section~\ref{secteich} we shall describe a scenario in which the role of smooth algebraic curves is  played by pure tropical curves. 

\begin{example}
(This generalizes Example~\ref{0triv}.)
A graph made of one vertex of weight $g$ and no edges nor legs is a weighted tropical curve
of genus $g$. If $g\leq 1$ such a curve is the specialization of 
the degenerate cases described in Example~\ref{0triv}, and 
 will  thus be excluded by our future numerical assumption, namely  $2g-2+n>0$. 
If $g\geq 2$ these curves play a role and must be considered.
\end{example}

To study moduli of tropical curves we need to generalize Proposition~\ref{rep3},
which is straightforward, once we provide the correct replacement for  graphs having no vertex of valence less than $3$. 
Here is how to do that:
\begin{defi}
\label{stable}
A weighted graph, or a metric weighted graph,  is called {\it stable}\footnote{The terminology is motivated by the subsequent Remark~\ref{stablec}.} if any vertex of weight $0$ has valence at least $3$,
   and any vertex of weight $1$ has valence at least $1$.
   \end{defi}
For instance,    $(\Gamma, \mo)$ is stable if and only if $\Gamma$ has no vertex of valence   $\leq 2$. 
All graphs in Example~\ref{04g} are stable
\begin{remark}
If $(\Gamma,w)$ is stable and  $(\Gamma' ,w')\leq (\Gamma,w)$,  then  $(\Gamma' ,w')$ is stable.
\end{remark}

Now we can  generalize  Proposition~\ref{rep3}. The proof is the same,
provided that 
``$1$-valent vertices" are replaced by ``$1$-valent vertices of weight $0$", and hence
  ``leaves" are replaced by ``edges adjacent to a $1$-valent vertex of weight $0$''.

\begin{prop}
\label{rep3w} Assume $2g-2+n\geq 1$.
\begin{enumerate}
\item
\label{rep3aw}
Every tropical equivalence class of $n$-pointed, weighted tropical curves contains a representative whose $n$ marked points are disjoint.
\item
\label{rep3bw}
The set of tropical equivalence classes of $n$-pointed, weighted tropical curves of genus $g$ 
is in bijection
with the set of stable metric weighted graphs of genus $g$ with $n$ legs.
\end{enumerate}
\end{prop}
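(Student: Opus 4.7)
The plan is to mimic, mutatis mutandis, the proof of Proposition~\ref{rep3}, implementing the replacement suggested by the author: throughout the argument, ``$1$-valent vertices'' becomes ``$1$-valent vertices of weight $0$'' and ``leaves'' becomes ``edges of length $\infty$ adjacent to a $1$-valent vertex of weight $0$''. All the operations involved (insertion and removal of $2$-valent vertices of weight $0$, removal of $1$-valent vertices of weight $0$ with their adjacent edge) are exactly the moves allowed by the definition of tropical equivalence in Definition~\ref{tropcw}, so each step preserves the equivalence class.

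For part (\ref{rep3aw}), I would start with an arbitrary representative $(\Gamma,w,\ell)$ and check whether some vertex $v$ is the endpoint of more than one leg. If so, pick a subset of $m\geq 1$ of those legs and insert a new $2$-valent vertex of weight $0$ in the interior of each (an allowed move, as the new vertices have weight $0$ and valence $2$). This makes the $m$ chosen legs disjoint from the rest at $v$, and does not affect disjointness elsewhere. Iterating over all vertices violating disjointness produces, in finitely many steps, an equivalent representative with pairwise disjoint marked points.

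For part (\ref{rep3bw}), I would run the three-step reduction of the proof of Proposition~\ref{rep3} with the weighted modifications. \textit{Step 1:} while some $v\in V(\Gamma)$ has valence $1$ and weight $0$, remove $v$ and its unique adjacent edge (whose length is $\infty$ by Definition~\ref{tropcw}). This terminates because each iteration strictly decreases $|V(\Gamma)|$, and the genus is preserved by Remark~\ref{genconst} applied to the corresponding weighted contraction. \textit{Step 2a:} while some weight-$0$ vertex $v$ of valence $2$ has no leg based at it, delete $v$ and merge its two adjacent edges into one of length equal to the sum. \textit{Step 2b:} while some weight-$0$ vertex $v$ of valence $2$ has a (necessarily unique) leg based at it, delete $v$ and merge the leg with the other adjacent edge (length $\infty$, since the leg has length $\infty$). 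Each of these steps is one of the moves listed in Definition~\ref{tropcw}, preserves the number of legs, and strictly decreases $|V(\Gamma)|$. After finitely many iterations we obtain an equivalent representative $(\Gamma''',w''',\ell''')$ whose every weight-$0$ vertex has valence $\geq 3$, i.e.\ a stable weighted metric graph in the sense of Definition~\ref{stable}.

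The remaining, and in my view only nontrivial, issue is the uniqueness up to isomorphism of the stable representative, i.e.\ the injectivity of the assignment on equivalence classes. The cleanest way is to show that the reduction above is canonical: two stable representatives of the same class must be isomorphic. One argues as follows. Any sequence of the moves in Definition~\ref{tropcw} either inserts/removes a weight-$0$ bivalent vertex (which does not change the underlying stable model because such a vertex is never present in a stable graph) or inserts/removes a weight-$0$ univalent vertex together with its adjacent edge (again impossible in a stable graph). Thus, given two stable representatives $(\G_1,w_1,\ell_1)$ and $(\G_2,w_2,\ell_2)$ of the same class, connected by a finite chain of moves, one checks by induction on the length of the chain that each move can be undone at the stable level, yielding an isomorphism between them that preserves the legs and the length function. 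This is the main technical point, but it is essentially bookkeeping since every non-stable configuration created in the chain must be cancelled later, and the cancellations pair up in a way compatible with an isomorphism of stable graphs. The analogous uniqueness in the algebraic setting (Remark~\ref{stabeq}) provides the exact conceptual template.
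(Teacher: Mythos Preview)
Your proposal is correct and follows essentially the same approach as the paper, which simply states that the proof of Proposition~\ref{rep3} goes through verbatim once ``$1$-valent vertices'' are replaced by ``$1$-valent vertices of weight $0$'' and ``leaves'' by ``edges adjacent to a $1$-valent vertex of weight $0$''. One small quibble: in Step~1 the genus is preserved not by Remark~\ref{genconst} (which concerns weighted contractions) but by the direct count $b_1=|E|-|V|+1$ together with the removed vertex having weight $0$; otherwise your write-up, including the extra care on uniqueness, is in line with the paper's ``trivial to prove''.
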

The following well known, elementary  fact will be useful.
  \begin{lemma}
\label{max}
Let $(\Gamma, w)$ be a genus $g$ stable graph with $n$ legs.
Then  $|E(\Gamma)|\leq 3g-3+n$ and equality holds if and only if $\Gamma$ is a 3-regular graph with
$b_1(\Gamma)=g$. Moreover, 
in such a case 
 $w=\underline{0}.$ 
\end{lemma}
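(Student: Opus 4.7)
The plan is to derive the bound from a vertex-by-vertex inequality coming from stability, and then read off the equality case. I would set up two basic identities: the handshake-type equality
\begin{equation*}
\sum_{v\in V(\Gamma)}\text{val}(v) = 2|E(\Gamma)| + n,
\end{equation*}
obtained by counting half-edges (each edge contributes two, each leg contributes one), together with the genus formula from (\ref{gw}), namely $g = b_1(\Gamma)+|w| = |E(\Gamma)|-|V(\Gamma)|+1+|w|$, which I would rewrite as $|V(\Gamma)| = |E(\Gamma)|+1-g+|w|$.

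The key step is a uniform local inequality: for every vertex $v$ of a stable weighted graph,
\begin{equation*}
\text{val}(v) + 2w(v) \geq 3.
\end{equation*}
This is a straightforward case check against Definition~\ref{stable}: if $w(v)=0$ then $\text{val}(v)\geq 3$; if $w(v)=1$ then $\text{val}(v)\geq 1$ gives $\text{val}(v)+2\geq 3$; and if $w(v)\geq 2$ then $2w(v)\geq 4$ so the inequality holds trivially. Equality forces either $w(v)=0$ and $\text{val}(v)=3$, or $w(v)=1$ and $\text{val}(v)=1$.

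Summing this inequality over all $v \in V(\Gamma)$ and substituting the two identities above gives
\begin{equation*}
2|E(\Gamma)|+n+2|w| \;\geq\; 3|V(\Gamma)| \;=\; 3\bigl(|E(\Gamma)|+1-g+|w|\bigr),
\end{equation*}
which rearranges to $|E(\Gamma)| \leq 3g-3+n-|w| \leq 3g-3+n$. This proves the first assertion.

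For the equality case, $|E(\Gamma)| = 3g-3+n$ forces $|w|=0$ (so $w=\underline{0}$) and $\text{val}(v)+2w(v)=3$ at every vertex; combined with $w=\underline{0}$ this forces $\text{val}(v)=3$ everywhere, so $\Gamma$ is $3$-regular, and then $b_1(\Gamma)=g-|w|=g$. Conversely, a $3$-regular graph with $b_1(\Gamma)=g$ and $w=\underline{0}$ plainly achieves equality in the computation above. No step looks genuinely hard; the only mild subtlety is remembering to include the legs in the half-edge count and to notice that the inequality $\text{val}(v)+2w(v)\geq 3$ absorbs the two separate clauses in the stability definition into a single estimate.
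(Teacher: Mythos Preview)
Your argument is correct and is genuinely different from the paper's. The paper proceeds by induction on $n$: it first handles $n=0$ by combining $|E(\Gamma)|=g-1-|w|+|V(\Gamma)|$ with the valence bound $2|E(\Gamma)|\geq 3|V(\Gamma)|$, and then for $n>0$ removes a leg, argues by maximality that the resulting graph cannot be stable, contracts an edge at the destabilized vertex, and applies the inductive hypothesis. Your approach instead packages stability into the single local estimate $\text{val}(v)+2w(v)\geq 3$ and sums once over all vertices, handling all $n$ simultaneously.

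What your approach buys: it is shorter, avoids induction, and as a bonus yields the sharper inequality $|E(\Gamma)|\leq 3g-3+n-|w|$, which makes the equality analysis immediate (you see at once that $|w|=0$ is forced). What the paper's approach buys: it stays closer to the recursive structure of stable graphs under leg removal and edge contraction, which is the operation driving the rest of the paper, so its argument doubles as a sanity check on how that structure behaves. Both are complete; yours is the more efficient route to this particular lemma.
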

\begin{proof}
We use induction on $n$. Let us begin with the base case, $n=0$.
We have 
$ 
g=b_1(\Gamma)+|w|=|E(\Gamma)|-|V(\Gamma)|+1+|w|;
$ 
hence,  as $|w|\geq 0$,
$$
|E(\Gamma)|= g-1-|w|+|V(\Gamma)|\leq g-1 +|V(\Gamma)|
$$
and the maximum is achieved for $|w|=0$. In this case $g=b_1(\Gamma)$ and every vertex of $\Gamma$ must have valence at least $3$ (as $(\Gamma ,w)$ is stable),
therefore 
 $$
g=|E(\Gamma)|-|V(\Gamma)|+1 \geq 3|V(\Gamma)|/2- |V(\Gamma)|+1= |V(\Gamma)|/2+1
$$
hence $|V(\Gamma)|\leq 2g-2$ and equality holds if and only if $\Gamma$ is 3-regular. In such a case we have $|E(\Gamma)|=3g-3$ and $b_1(\Gamma)=g$.

Let us assume $n>0$. Let $(\Gamma,w)$ be a stable graph of genus $g$ for which
$|E(\Gamma)|$ is maximum. We can reduce to assume that $\{g,n\}\neq \{0,3\}$
and $\{g,n\}\neq \{1,1\}$ by treating these trivial cases separately.
Denote by $(l_1,\ldots,l_n)$ the legs of $\Gamma$.
Let $\Gamma':=\Gamma -l_n$, hence $\Gamma'$ has $n-1$ legs, and same vertices and   edges as $\Gamma$.
We claim that $\Gamma'$ is not stable. Indeed, if $\Gamma'$ is stable, 
we can construct a new stable graph $\Gamma''$, of genus $g$ as follows. Pick an edge $e'\in E(\Gamma')$, insert a weight-$0$ vertex 
$u$ in its interior, and add a leg adjacent to $u$. This gives a graph, stable of genus $g$ and having a number of edges equal to $|E(\Gamma)|+1$. This contradicts the maximality of  $|E(\Gamma)|$.

So, $\Gamma'$ is not a stable graph; this means that
the vertex $v\in V(\Gamma)$ adjacent to $l_n$ has weight $0$ and 
valence $3$. 
Now look at 
 $v$ as a vertex of $\Gamma'$; it has
valence $2$ and,
 by our reduction, $\Gamma'$ has either
 two distinct edges, or an edge and a leg, adjacent to $v$.
We consider the weighted contraction of $\Gamma'$ given by contracting
an edge adjacent to $v$.
 This is a stable graph, $(\Gamma^*,w^*)$, of genus $g$.
with $n-1$ legs.
By induction we have $|E(\Gamma^*)|=3g-3+n-1$, moreover  $\Gamma^*$ is $3$-regular with weight function constantly zero. But then
$$
|E(\Gamma)|=|E(\Gamma^*)|+1=3g-3+n-1+1=3g-3+n
$$
and $\Gamma$ is $3$-regular by construction. The proof is complete.
\end{proof}

\section{The moduli space of tropical curves}  
\label{modsec}
From now on, we shall assume $2g-2+n \geq 1$ and we shall
consider weighted tropical curves up to tropical equivalence.
Hence, by
Proposition~\ref{rep3w} we can identify our $n$-pointed 
tropical curves with stable metric graphs.

Everything we shall say about weighted tropical curves holds for pure tropical curves,
with obvious modifications.

 \subsection{Tropical curves with fixed combinatorial type}
\begin{defi}
We say that  two  
 tropical curves $(\Gamma,w,\ell)$ and $(\Gamma',w',\ell')$
 with $n$ marked points  $L(\Gamma)=\{x_1,\ldots, x_n\}$ and $L(\Gamma')=\{x_1',\ldots, x_n'\}$
 are
isomorphic,  and write 
  $(\Gamma,w,\ell) \cong (\Gamma',w',\ell')$, if  there is an isomorphism $\alpha$ of the underlying unweighted $n$-pointed curves $(\Gamma,\ell)$ and $(\Gamma',\ell')$
as in Definition~\ref{troppc},   such that 
  $\forall v\in V(\Gamma)$   we have
$ 
w(v)=w'(\alpha(v)).
$ 

We write $\Aut(\Gamma,w,\ell)$ for  the set of automorphisms of a tropical curve $(\Gamma,w,\ell)$.
\end{defi}

Let us fix a stable graph $(\Gamma, w)$  of genus $g$ with  labeled legs $L(\G)=\{x_1,\ldots, x_n\}$, and let us introduce the set $\Mt(\Gamma,w)$
of isomorphism classes of
$n$-pointed tropical curves supported on $(\Gamma,w)$.
In order to study  $\Mt(\Gamma,w)$, we introduce the open cone 
$$R(\Gamma,w):=\R^{E(\Gamma)}_{> 0}.$$ Any element in
$ R(\Gamma,w)$ defines a unique metric graph supported on $(\Gamma, w)$.
Therefore there is a natural surjection 
\begin{equation}
\label{CM}
\pi:R(\Gamma,w)\la \Mt(\Gamma,w);
\end{equation}
it is clear that $\pi(\Gamma, w,\ell)=\pi(\Gamma, w,\ell')$ if and only if $(\Gamma, w,\ell)\cong (\Gamma, w,\ell')$.

The closure of $R(\Gamma,w)$ is, of course, the closed cone
$$
\ov{R(\Gamma,w)}=\R^{ E(\Gamma)}_{\geq 0}\subset \R^{E(\Gamma)}.
$$
 Let $p\in \ov{R(\Gamma,w)}\smallsetminus R(\Gamma,w)$ and, to simplify the notation,
suppose that 
$$p=(t_1,\ldots, t_m,0,\ldots, 0)$$ with $t_i>0$ for all $i=1,\ldots,m$, for some $0\leq m< |E(\Gamma)|.$ 
Let us show that there is a unique $n$-pointed tropical curve $(\Gamma_p, w_p,\ell_p)$ of genus $g$ associated to $p$.
Let 
$$S=\{e_{m+1},\ldots, e_{|E(\Gamma)|}\}\subset E(\Gamma)=\{e_1,\ldots, e_{|E(\Gamma)|}\};$$
then $(\Gamma_p,w_p)=(\GS,\wS)$, i.e.  $(\Gamma_p,w_p)$ is the weighted contraction
of $(\Gamma,w)$ obtained by contracting $S$ (defined in \ref{cont}).  We thus have a natural identification 
 $$E(\Gamma_p)=E(\Gamma)\smallsetminus S=\{e_1,\ldots, e_m\}.
 $$
 The length function
 $\ell_p$ is defined by setting  $\ell_p(e_i)=t_i$ for all $e_i\in E(\Gamma_p)$.
As we noticed   in Remark~\ref{genconst},  we have $g=g(\Gamma_p, w_p)$.

Summarizing: we showed that the boundary points of $\ov{R(\Gamma,w)}$ parametrize $n$-pointed tropical curves  of genus $g$
whose underlying weighted graph is a contraction of $(\Gamma,w)$. More precisely,
for any  
$I\subset\{1,2,\ldots,|E(\Gamma)|\}$,
denote by $\ov{R(\Gamma,w)}_I\subset \ov{R(\Gamma,w)}$
the open face 
$$
\ov{R(\Gamma,w)}_I:=\{(t_1,\ldots, t_{|E(\Gamma)|})\in \ov{R(\Gamma,w)}:\  t_i=0   \  \forall i\in I,\  t_i>0   \  \forall i\not\in I\}.
$$
Next, write  $S_I:=\{e_i,\forall i\in I\}\subset E(\Gamma).$ 
We have proved the following.
\begin{lemma}
\label{cl}
With the above  notation,
the partition
$$
\ov{R(\Gamma,w)}=\bigsqcup_{I\subset \{1,2,\ldots,|E(\Gamma)|\}}\ov{R(\Gamma,w)}_I
$$
is such that for every $I$ there is  a natural isomorphism
$\ov{R(\Gamma,w)}_I\cong R(\Gamma',w')$
where $(\Gamma',w')=(\Gamma_{/S{_I}}, w_{/S{_I}})$.
 \end{lemma}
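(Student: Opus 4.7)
The plan is to verify this by a straightforward unpacking of definitions; the statement is essentially a bookkeeping identification between the stratification of the closed orthant $\mathbb{R}^{E(\Gamma)}_{\geq 0}$ by coordinate faces and the set of weighted contractions of $(\Gamma,w)$.

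First I would dispose of the partition claim. Any point $t=(t_1,\ldots,t_{|E(\Gamma)|})\in \overline{R(\Gamma,w)}$ determines the unique subset $I(t):=\{i : t_i=0\}\subset \{1,\ldots,|E(\Gamma)|\}$, and by definition $t\in \overline{R(\Gamma,w)}_{I(t)}$; conversely, the faces $\overline{R(\Gamma,w)}_I$ for distinct $I$ are clearly disjoint since they are cut out by different vanishing conditions. This gives the disjoint union decomposition.

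For the isomorphism, fix $I$ and set $T_I:=\{1,\ldots,|E(\Gamma)|\}\smallsetminus I$. Forgetting the coordinates indexed by $I$, all of which vanish on $\overline{R(\Gamma,w)}_I$, yields a canonical homeomorphism
$$\overline{R(\Gamma,w)}_I \xrightarrow{\;\sim\;} \mathbb{R}^{T_I}_{>0}.$$
Next I would invoke the discussion of edge contraction in subsection~\ref{contr}, where it was recorded that the contraction map $\sigma:\Gamma\to \Gamma_{/S_I}$ induces a natural bijection $E(\Gamma_{/S_I})\leftrightarrow E(\Gamma)\smallsetminus S_I$, i.e.\ between $E(\Gamma')$ and $T_I$. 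Composing, we obtain the desired natural isomorphism
$$\overline{R(\Gamma,w)}_I \;\cong\; \mathbb{R}^{E(\Gamma')}_{>0} \;=\; R(\Gamma',w').$$
One should also remark, to match the parametrization picture, that under this identification a point of $\overline{R(\Gamma,w)}_I$ corresponds to the tropical curve $(\Gamma_{/S_I}, w_{/S_I}, \ell_p)$ described just before the lemma, where the length function $\ell_p$ reads off the surviving (positive) coordinates; this is exactly the construction already recorded there.

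There is no real obstacle to overcome; the substance of the statement is the naturality of the identification of edges under contraction, which has been established earlier, and the compatibility between the weight-function formula $w_{/S_I}$ of Definition~\ref{cont} and the specialization of tropical curves. Once these are in place, the result is a formal consequence of the coordinate-face stratification of the closed positive orthant.
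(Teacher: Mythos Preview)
Your proposal is correct and follows essentially the same approach as the paper: the paper presents the argument in the discussion immediately preceding the lemma (ending with ``We have proved the following''), identifying each boundary point with a tropical curve on the weighted contraction $(\Gamma_{/S_I},w_{/S_I})$ via the natural bijection $E(\Gamma_{/S_I})=E(\Gamma)\smallsetminus S_I$, which is exactly the unpacking you give.
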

 \begin{example}
 \label{origin}
If   $I=E(\Gamma)$ then
$\ov{R(\Gamma,w)}_I=\{0\}$, corresponding to the graph with no edges,  one vertex
of weight $g$, and $n$ legs. It is clear that this graph can be obtained as weighted contraction from every genus $g$ weighted graph.
 \end{example}

Consider two points $p_1, p_2\in \ov{R(\Gamma,w)}$.
By what we said  there exist two metric weighted graphs associated to them, denoted
$(\Gamma_1,w_1,\ell_1)$ and $(\Gamma_2,w_2,\ell_2)$.
We have  an equivalence on $\ov{R(\Gamma,w)}$:
\begin{equation}
\label{sim}
p_1\sim p_2   \  \ \text{ if  }\   \ (\Gamma_1,w_1,\ell_1)\cong (\Gamma_2,w_2,\ell_2).
\end{equation}
The quotient with respect to this equivalence relation will be denoted
\begin{equation}
\label{MG}
\ov{\pi}:\ov{R(\Gamma,w)}\la \ov{\Mt(\Gamma,w)}:=\ov{R(\Gamma,w)}/\sim 
\end{equation}
and   $\ov{\Mt(\Gamma,w)}$ is a topological space, with the quotient topology of the euclidean topology on $\ov{R(\Gamma,w)}= \R^{E(\Gamma)}_{\geq 0}$. 
A precise description of the fibers of $\ov{\pi}$ is given in Lemma~\ref{preimage} below.

Now we must study  the automorphisms of $(\G,w)$,  and how  they act $\ov{R(\Gamma,w)}$.

 \begin{defi}
Let  $(\Gamma,w) $ be a weighted graph  with    labeled legs $L(\G)=\{x_1,\ldots, x_n\}$.
An automorphism of $(\Gamma,w) $  
is an isomorphism,   $\alpha$, of $\G$ with itself as defined in \ref{combg},
such that $\alpha(x_i)=x_i$ for every $i=1,\ldots, n$, and 
$ 
w(v)=w(\alpha(v))$ for every $v\in V(\Gamma)$.
We denote by $\Aut(\Gamma,w)$ the group of automorphisms of $(\Gamma,w)$.
\end{defi}

\begin{remark}
\label{aut}
If $(\Gamma,w,\ell)$ is a pointed curve, then  $\Aut(\Gamma,w,\ell)\subset \Aut(\Gamma,w)$.
The automorphism group of a weighted graph, and of a tropical curve, is finite.
\end{remark}
Of course,  $\Aut(\Gamma,w)$ 
is made of   pairs $\alpha=(\alpha_V,\alpha_E)$
of permutations on the vertices and the edges   satisfying 
some compatibility conditions; in fact, by definition, $\alpha$ has to fix the legs
(i.e. with the notation of Definition~\ref{combg}, $\alpha_L=id_{L(\G)}$).
Therefore   $\Aut(\Gamma,w)$
acts on $\ov{R(\Gamma,w)}$ by permuting the coordinates according to
$\alpha_E$, in particular, 
 $\Aut(\Gamma,w)$ acts as group of isometries.

$\Aut(\Gamma,w)$ may contain   non-trivial elements   acting trivially on $\ov{R(\Gamma,w)}$.

\begin{example}
The loop-inversion automorphism described in Example~\ref{invertloop} acts trivially on $\ov{R(\Gamma,w)}$.
\end{example}
\begin{example}
Assume $L(\G)=\emptyset$ and $V(\G)=\{v_1,v_2\}$
with $w(v_1)=w(v_2)$; suppose $E(\G)=\{e_1,\ldots, e_{n}\}$ and let
  $\G$ have no loops, as in the picture below. So $(\G,w)$ has genus $n-1+|w|$. 
   \
   
    \begin{figure}[h]
\begin{equation*}
\xymatrix@=.5pc{
 &\\
  &&*{\bullet}
  \ar @{.} @/_.2pc/[rrrrr]_(0.01){v_1} \ar @{-} @/_1.5pc/[rrrrr]^{e_n} _(1){v_2}\ar@{-} @/^1.1pc/[rrrrr]^{e_2}\ar@{-} @/^2pc/[rrrrr]^{e_1}
&&&&& *{\bullet} &&&&&&&
\\
 &\\
}
\end{equation*}
\end{figure}
\

  Now,
  $\G$ has an involution  swapping $v_1$ and $v_2$,
  and   every conjugate pair of half-edges $\{h, \ov{h}\}$ (so that the edges are all fixed).
This automorphism acts trivially on $\ov{R(\Gamma,w)}$.
 
 Denote by
 ${\mathcal S}_n$ the symmetric group, then
  $$\Aut (\G,w)\cong {\mathcal S}_n\times \Z/2\Z
  $$
  where the ${\mathcal S}_n$ factor accounts for   automorphisms permuting the edges,
  which clearly act non-trivially on $\ov{R(\Gamma,w)}$.

  Observe that if   $L(\G)\neq \emptyset$, or  if $w(v_1)\neq w(v_2)$, then 
  $\Aut (\G,w)\cong {\mathcal S}_n$.

\end{example}

In the sequel we simplify the notation and  set, for $I\subset (1,\ldots, |E(\Gamma)|)$,
\begin{equation}
\label{simpnot}
F_I:=\ov{R(\Gamma,w)}_I,\quad \text{and} \quad G_I:=\Aut(\Gamma_{/S_I}, w_{/S_I})
\end{equation}
so that $G_I$ acts on  $\ov{F_I}$ (closure of $F_I$ in $R(\Gamma,w)$) by permuting the coordinates.
As $F_I$ varies among the (open) faces of $\ov{R(\Gamma,w)}$ it may very well happen that 
different faces correspond to
isomorphic   weighted graphs. Let us introduce some notation to keep track of this fact.
For a fixed $I\subset  \{1,\ldots, |E(\Gamma)|\}$, 
 we denote  
 $$
 \Is(I):=\{J\subset (1,\ldots, |E(\Gamma)|): \ \   (\Gamma_{/S{_J}}, w_{/S{_J}})\cong (\Gamma_{/S{_I}}, w_{/S{_I}})\}.
 $$
Next,
  for every $J\in \Is(I)$ we    fix an 
 isomorphism $\Phi_J:(\Gamma_{/S{_I}}, w_{/S{_I}})\to (\Gamma_{/S{_J}}, w_{/S{_J}})$, and the isometry
 $$
 \phi_J: {\ov{F_I}}\stackrel{\cong}{\la}{\ov{F_J}}
 $$ induced by $\Phi_J$. Notice that $\phi_J$ is induced by a bijection  between the natural coordinates of $F_I$ and $F_J$.
  If $I=J$ we shall assume that $\Phi_I$ is the identity.
 For every point $p\in F_I$ we denote
 $ 
 \Is(p):=\Is (I).
 $ 
\begin{lemma}
\label{preimage}
Let $p\in \ov{R(\Gamma,w)}$.
Then, with the notation \eqref{MG},
$$
 \ov{\pi}^{-1} (\ov{\pi}(p))=\{g\phi_J(p),\  \forall J\in \Is(p),\  \forall g\in   G_J\}.
 $$
\end{lemma}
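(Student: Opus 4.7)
The plan is to establish the claimed set equality by proving both inclusions.

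For the inclusion $\supseteq$, I would fix $J \in \Is(p)$ and $g \in G_J$ and verify $g\phi_J(p) \sim p$. By construction $\phi_J : \ov{F_I} \to \ov{F_J}$ is the coordinate isometry induced by the chosen isomorphism $\Phi_J : (\Gamma_{/S_I}, w_{/S_I}) \to (\Gamma_{/S_J}, w_{/S_J})$ of weighted graphs, so the metric weighted graph attached to $\phi_J(p)$ is obtained from the one attached to $p$ simply by relabelling edges and vertices via $\Phi_J$; in particular the two are isomorphic as $n$-pointed metric weighted graphs. Post-composing with $g \in G_J = \Aut(\Gamma_{/S_J}, w_{/S_J})$ only permutes the coordinates by an automorphism of the underlying weighted graph and hence leaves the isomorphism class of the metric weighted graph unchanged. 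Therefore $g\phi_J(p) \sim \phi_J(p) \sim p$, as required.

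For the inclusion $\subseteq$, let $q \in \ov{\pi}^{-1}(\ov{\pi}(p))$ and let $F_J$ denote the unique open face of $\ov{R(\Gamma,w)}$ containing $q$. By Lemma~\ref{cl}, the metric weighted graph attached to $q$ has underlying weighted graph $(\Gamma_{/S_J}, w_{/S_J})$ with length function $\ell_q$ read off from the positive coordinates of $q$. The hypothesis $q \sim p$ produces an isomorphism
\[
\Psi : (\Gamma_{/S_I}, w_{/S_I}, \ell_p) \stackrel{\cong}{\longrightarrow} (\Gamma_{/S_J}, w_{/S_J}, \ell_q)
\]
of $n$-pointed metric weighted graphs. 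Forgetting the length functions already forces $(\Gamma_{/S_I}, w_{/S_I}) \cong (\Gamma_{/S_J}, w_{/S_J})$, so $J \in \Is(p)$. Setting $g := \Psi \circ \Phi_J^{-1} \in \Aut(\Gamma_{/S_J}, w_{/S_J}) = G_J$, I would then observe that $\phi_J(p)$ is, by definition, the point of $F_J$ with length function $(\Phi_J)_{\ast}\ell_p$, so $g\phi_J(p)$ has length function $\Psi_{\ast}\ell_p = \ell_q$. Since two points of $F_J$ with the same length function coincide, $q = g\phi_J(p)$.

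The only mild obstacle is bookkeeping: one must keep separate the abstract isomorphism $\Psi$ produced by the relation $p \sim q$, the reference isomorphism $\Phi_J$ used once and for all to define $\phi_J$, and their composite $g = \Psi \circ \Phi_J^{-1}$ which lies in $G_J$. Once these are properly distinguished, both inclusions reduce to a comparison of length functions on a single open face.
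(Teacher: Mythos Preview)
Your proof is correct and follows essentially the same approach as the paper: both directions of the set equality, with the easy inclusion by construction and the hard inclusion by extracting from the given isomorphism $\Psi$ an element of $G_J$. Your argument is in fact slightly more streamlined than the paper's, which splits into the cases $J=I$ and $J\neq I$ and reduces the latter to the former, whereas you handle both at once by directly writing $g=\Psi\circ\Phi_J^{-1}$.
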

  \begin{proof}
The inclusion  $ \ov{\pi}^{-1} (\ov{\pi}(p))\supset \{g\phi_J(p),\  \forall J\in \Is(p),\  \forall g\in   G_J\}$
is obvious, as the set on the right parametrizes isomorphic metric weighted graphs.
  
Let $F_I$ be the face containing $p$.
 Let $r\in \ov{R(\Gamma,w)}$ be such that $\ov{\pi}(p)=\ov{\pi}(r)$. 
We have an isomorphism  $(\Gamma_p,w_p,\ell_p)\cong (\Gamma_r,w_r,\ell_r)$, hence
 $F_I\cong R(\Gamma_p,w_p)\cong  R(\Gamma_r,w_r)$.
 Suppose first that $r\in F_I$. Then the underlying weighted graph of $p$ and $r$ is the same,
 namely $(\Gamma_{/S{_I}},w_{/S{_I}})$,
 and an  isomorphism   between them is an element of    $\Aut(\Gamma_{/S{_I}},w_{/S{_I}})=G_I$
preserving the lengths of the edges.
In other words, there exists $g\in G_I$ such that $r=gp$ (recall that  $\phi_I=Id _{F_I}$).

Now let  
  $F_{J}\cong R(\Gamma_r,w_r)$ be the face containing $r$, with $J\neq I$. Of course, $J\in \Is(p)$, hence
 we have an isometry
 $ 
 \phi_J:F_{I} \to F_J,
  $ 
 induced by an isomorphism between the underlying graphs.
 It is clear that $r':=\phi_J(p)$ parametrizes a metric weighted graph isomorphic to the one parametrized by $p$.
Therefore  $\ov{\pi}(r')=\ov{\pi}(r)$, hence, by the previous part,  there exists $g\in G_J$ such that 
$
r=gr'=g\phi_J(p).
$
\end{proof}

For any group $G$ acting on $ \ov{R(\Gamma,w)}$ and any
subset $Z\subset \ov{R(\Gamma,w)}$, we denote by $Z^G\subset \ov{R(\Gamma,w)}$ the union of the $G$-orbits of the elements in $Z$.

\begin{prop}
\label{ovM}
Let $(\Gamma,w)$  be a stable graph.
\begin{enumerate}
\item
\label{ovMc} There is a canonical decomposition (notation in (\ref{min}))
$$
\ov{\Mt(\Gamma,w)}=\bigsqcup_{(\Gamma',w')\leq (\Gamma,w)} \Mt(\Gamma',w'),
$$
where $\Mt(\Gamma,w)$ is   open and dense  in $\ov{\Mt(\Gamma,w)}$.
\item
\label{quot}
The quotient map $\ov{\pi}:\ov{R(\Gamma,w)}\to \ov{\Mt(\Gamma,w)}$ factors as follows:
$$
\ov{\pi}:\ov{R(\Gamma,w)}\stackrel{\tau}{\la} \ov{R(\Gamma,w)}/\Aut(\Gamma,w)\stackrel{\gamma}{\la}  \ov{\Mt(\Gamma,w)}. 
$$
Moreover  $\tau$ is   open  and $\ov{\pi}$ has finite fibers.
\item$\ov{\Mt(\Gamma,w)}$ is a Hausdorff topological space.
 \end{enumerate} 
\end{prop}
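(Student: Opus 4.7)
The plan is to address the three parts in turn, drawing on the face decomposition from Lemma~\ref{cl} and the finite-fiber description from Lemma~\ref{preimage}.

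For part~(\ref{ovMc}), the decomposition is essentially tautological: by Lemma~\ref{cl}, $\ov{R(\Gamma,w)}$ is stratified by the open faces $F_I \cong R(\Gamma_{/S_I}, w_{/S_I})$, and the restriction of $\ov{\pi}$ to $F_I$ lands surjectively on $\Mt(\Gamma_{/S_I}, w_{/S_I})$. By the definition of $\sim$, two faces $F_I, F_J$ map onto the same stratum iff $(\Gamma_{/S_I}, w_{/S_I}) \cong (\Gamma_{/S_J}, w_{/S_J})$, so grouping by isomorphism class of contractions yields the asserted disjoint union. Since no proper contraction of $(\Gamma,w)$ can reproduce $(\Gamma,w)$ (a weighted contraction strictly decreases the number of edges), the preimage $\ov{\pi}^{-1}(\Mt(\Gamma,w))$ coincides with $R(\Gamma,w)$, the open dense interior of the cone, giving openness and density of $\Mt(\Gamma,w)$ in $\ov{\Mt(\Gamma,w)}$.

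For part~(\ref{quot}), the action of $\Aut(\Gamma,w)$ on $\ov{R(\Gamma,w)}$ is by coordinate permutations and sends each tropical curve to an isomorphic one, so $\ov{\pi}$ is constant on $\Aut(\Gamma,w)$-orbits and factors through $\tau$. Openness of $\tau$ is the standard fact that quotient maps by group actions are open. The fibers of $\ov{\pi}$ are finite by Lemma~\ref{preimage}, since $\Is(p)$ is a finite set of subsets of the finite set $E(\Gamma)$ and each $G_J$ is finite.

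For the Hausdorff claim, I would use the classical criterion: $X/R$ is Hausdorff whenever $X$ is Hausdorff, $R$ is a closed equivalence relation, and the quotient map is open. To verify $\sim$ is closed in $\ov{R(\Gamma,w)} \times \ov{R(\Gamma,w)}$, take $(p_n, q_n) \to (p,q)$ with $p_n \sim q_n$; the data $(I_n, J_n, g_n)$ with $p_n \in F_{I_n}$, $J_n \in \Is(I_n)$, $g_n \in G_{J_n}$ and $q_n = g_n\phi_{J_n}(p_n)$ ranges over a finite set, so after passing to a subsequence I may assume these are constant at $(I^*, J^*, g^*)$. Then $p \in \ov{F_{I^*}}$, and continuity of the isometry $\phi_{J^*}$ on the closed face $\ov{F_{I^*}}$ yields $q = g^*\phi_{J^*}(p)$, hence $p \sim q$. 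For openness of $\ov{\pi} = \gamma \circ \tau$, $\tau$ is open by the previous part; for $\gamma$, I would produce, near each $q = g\phi_J(p) \in F_J$, a local coordinate-permuting isometry of $\ov{R(\Gamma,w)}$ sending a neighborhood of $p$ onto a neighborhood of $q$ by extending the edge bijection $E(\Gamma)\smallsetminus S_I \to E(\Gamma)\smallsetminus S_J$ arbitrarily on the contracted edges, and checking that the image lies in the saturation via the face-by-face structure of $\sim$.

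The main obstacle will be openness of $\ov{\pi}$: since the isometries $\phi_J$ are defined only on individual closed faces and do not extend canonically across contracted coordinates, matching neighborhoods of $p$ and $q$ while respecting $\sim$ requires a careful local analysis of how faces of $\ov{R(\Gamma,w)}$ glue along common sub-faces, ensuring that the chosen extensions genuinely carry $\sim$-equivalent points to $\sim$-equivalent points in a neighborhood.
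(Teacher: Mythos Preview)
Your treatment of parts (\ref{ovMc}) and (\ref{quot}) is fine and matches the paper's argument; the paper just spells out more carefully why an element of $\Aut(\Gamma,w)$ carries a boundary point to an isomorphic metric weighted graph, but your summary is correct.

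For part (3), however, your route via the criterion ``closed equivalence relation $+$ open quotient map $\Rightarrow$ Hausdorff quotient'' breaks down, because $\ov{\pi}$ (and hence $\gamma$) is \emph{not} open in general. Your closedness argument is sound, but the openness step cannot be completed: the very phenomenon noted in the Remark following the Proposition---that automorphisms of a contraction need not lift to automorphisms of $(\Gamma,w)$---produces counterexamples. Concretely, take the genus-$3$ graph $(\Gamma,\underline 0)$ of Example~\ref{contfig}: two weight-zero vertices, a loop $a$ at one, a loop $b$ at the other, and two parallel edges $e_1,c$ between them. On $\ov{R(\Gamma,\underline 0)}=\R^4_{\geq 0}$ with coordinates $(x_{e_1},x_a,x_b,x_c)$, the point $p=(0,1,2,3)$ lies on the face $F_{\{e_1\}}$, where the contracted graph is a rose with three petals and the transposition $(ac)$ is a legitimate automorphism; hence $q=(0,3,2,1)\sim p$. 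But $\Aut(\Gamma,\underline 0)$ acts on edges only through $\langle(ab),(e_1c)\rangle$, so the interior point $q'=(\epsilon,3,2,1)$ has $\Aut(\Gamma,\underline 0)$-orbit $\{(\epsilon,3,2,1),(\epsilon,2,3,1),(1,3,2,\epsilon),(1,2,3,\epsilon)\}$, none of which is close to $p$ (or to any point in the $\Aut(\Gamma,\underline 0)$-orbit of $p$). Thus for a small ball $B$ around $p$, even the $\Aut(\Gamma,\underline 0)$-saturation $U$ of $B$ has $q\in\ov\pi^{-1}(\ov\pi(U))$ while $q'\notin\ov\pi^{-1}(\ov\pi(U))$, so $\ov\pi(U)$ is not open. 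Your proposed fix---extending the face isometry $\phi_J$ to a global coordinate permutation---fails for exactly this reason: an arbitrary extension of the edge bijection $E(\Gamma)\smallsetminus S_I\to E(\Gamma)\smallsetminus S_J$ need not be an automorphism of $(\Gamma,w)$, so it will not carry interior points to $\sim$-equivalent ones.

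The paper avoids this obstacle by not attempting to prove openness of $\ov\pi$. Instead it takes, for distinct $\ov p,\ov q\in\ov{\Mt(\Gamma,w)}$, the \emph{entire} finite fibers $\{p_i\}$ and $\{q_j\}$, chooses $\epsilon$ small enough that (i) all the balls $B_{p_i}(\epsilon),B_{q_j}(\epsilon)$ are pairwise disjoint and (ii) each such ball meets $\ov{F_I}$ only when its center already lies in $\ov{F_I}$, and then sets $U=\bigcup_i B_{p_i}(\epsilon)$, $V=\bigcup_j B_{q_j}(\epsilon)$. Condition (ii) guarantees that every $g\phi_J$ relevant to a point of $U$ sends the ball around some $p_i$ isometrically onto the ball around another $p_{i'}$ in the same fiber, so $\ov\pi^{-1}(\ov\pi(U))=U$; likewise for $V$. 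The key point you are missing is that saturation works only because one takes balls around \emph{all} fiber points simultaneously, not around a single representative.
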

\begin{proof}
The existence of the decomposition is a straightforward consequence of the definition 
of  $\ov{\Mt(\Gamma,w)}$ and of Lemma~\ref{cl}.

Recall from (\ref{CM}) that $\Mt(\Gamma,w)=R(\Gamma,w)/\sim$.
Now 
$R(\Gamma,w)$ is open and dense in $\ov{R(\Gamma,w)}$ and we have
$\ov{\pi}^{-1}(\Mt(\Gamma,w))=R(\Gamma,w)$.
Therefore 
$\Mt(\Gamma,w)$ is open and dense in $ \ov{\Mt(\Gamma,w)}$.

Now let $G:=\Aut(\Gamma,w)$.
The restriction of $\ov{\pi}$ to $R(\Gamma,w)$ is the quotient
  $\pi:R(\Gamma,w)\to \Mt(\Gamma,w)=R(\Gamma,w)/G$, 
  so in this case part (\ref{quot}) is proved.
  
 Let now $p\in \ov{R(\Gamma,w)}_I$ with $I\neq \emptyset$ (cf. Lemma~\ref{cl}).
 Then $p=(t_1,\ldots, t_{|E(\Gamma)|})$ with $t_i=0$ for every $i\in I$, and $t_i>0$ otherwise.
 Let $\alpha\in G$, then   $\alpha$ acts as a permutation on $\{1,\ldots,|E(\Gamma)|\}$,
hence  
 $$
 \alpha (p)=(t_{\alpha^{-1}(1)},\ldots, t_{\alpha^{-1}(|E(\Gamma)|)})\in \ov{R(\Gamma,w)}_{\alpha(I)},
 $$
where $\alpha(I)=\{\alpha(i),\ \forall i\in I\}\subset\{1,2,\ldots, |E(\Gamma)|\}$.
Let $S=S_I\subset E(\Gamma)$ and  $\alpha(S)\subset  E(\Gamma)$  be the set of edges corresponding, respectively,  to $I$ and $\alpha(I)$.
 Lemma~\ref{cl} and the discussion preceding it yield
$$
 \ov{R(\Gamma,w)}_I=R(\GS, \wS),\  \  \  \   \ov{R(\Gamma,w)}_{\alpha(I)}=R(\Gamma_{/\alpha(S)}, w_{/\alpha(S)}).
$$
The automorphism $\alpha$ maps $S$ into $\alpha(S)$, hence it induces an isomorphism
$$
\ov{\alpha}:\GS \stackrel{\cong}{\la}\Gamma/\alpha(S).
$$
Moreover, $\alpha$ (as any automorphism of any graph) maps bijectively cycles to cycles, therefore $\ov{\alpha}$ induces an isomorphism $(\GS, \wS)\cong (\Gamma/\alpha(S), w/\alpha(S))$.
Finally, the point $p$ corresponds to a metric graph on $(\GS, \wS)$ with length function
$\ell(e_i)=t_i$ for every $i\not\in I$ (we have $E(\GS)=E(\Gamma)\smallsetminus S$);
it is clear that
 $\alpha(p)$ corresponds to a metric graph on $(\GS, \wS)$ with length  $\alpha(\ell)$ given by 
$\alpha(\ell)(e_{\alpha(i)})=t_{\alpha^{-1}(\alpha(i))}=t_i$ for every $i\not\in I$.
But then $p$ and $\alpha(p)$ parametrize isomorphic metric weighted graphs. Therefore $\ov{\pi}(p)=\ov{\pi}(\alpha(p))$ and the factorization of part (\ref{quot}) is proved.

The fact that $\tau$ is open follows easily from   $G$ being a finite group of isometries of $ \ov{R(\Gamma,w)}$.
Similarly, we easily  get that $\ov{R(\Gamma,w)}/\Aut(\Gamma,w)$ is Hausdorff.

 The fibers of ${\ov{\pi}}$ are finite by lemma~\ref{preimage}.

It remains to  show that $\ov{\Mt(\Gamma,w)}$ is Hausdorff; 
to do that we will use the notation (\ref{simpnot}).
Let $\ov{p}$ and  $\ov{q}$ be two distinct points in $\ov{\Mt(\Gamma,w)}$.
Set $\ov{\pi}^{-1}(\ov{p})=\{p_1,\ldots, p_m\}$ and $\ov{\pi}^{-1}(\ov{q})=\{q_1,\ldots, q_n\}$.
There exists an $\epsilon>0$ small enough  that the following holds.

\noindent
(1) For every $i,j$ the open balls $B_{p_i}(\epsilon)$ and $B_{q_j}(\epsilon)$ do not intersect each other.

\noindent
(2) If $\ov{F_I}\cap B_{p_i}(\epsilon)\neq \emptyset$ then $p_i\in \ov{F_I}$. Similarly,
if 
 $\ov{F_I}\cap B_{q_j}(\epsilon)\neq \emptyset$ then $q_j\in \ov{F_I}$.
 
 We now set 
 $$
 U:=\cup_{i=1}^m B_{p_i}(\epsilon)\cap  \ov{R(\Gamma,w)}\quad \text{and} \quad V:=\cup_{j=1}^n B_{q_j}(\epsilon) \cap \ov{R(\Gamma,w)}.
 $$

It is clear that $U$ and $V$ are open subsets of $\ov{R(\Gamma,w)}$; moreover 
$U\cap V=\emptyset$, by (1) above.
We claim that 
\begin{equation}
\label{sat}
\ov{\pi}^{-1} (\ov{\pi}(U))=U \quad \text{and} \quad \ov{\pi}^{-1} (\ov{\pi}(V))=V.
 \end{equation}
 
 To prove it  we set some simplifying conventions.
Pick $I\subset \{1,\ldots, |E(\Gamma)|\}$ and consider the group $G_I$, which acts on $ \ov{F_I}$.
Consider also, for every  $J\in \Is (I)$, the isometry $\phi_J:\ov{F_I}\to \ov{F_J}$
(cf. Lemma~\ref{preimage}).
We extend  to   $\ov{R(\Gamma,w)}$ the action of $G_I$
and the map $\phi_J$ as follows.
For every $p\in \ov{R(\Gamma,w)}\smallsetminus \ov{F_I}$,  every $g\in G_I$ and every $\phi_J$
as above, 
 we set $gp=p$ and $\phi_J(p)=p$.
Now,  by Lemma~\ref{preimage}  
to prove the claim it suffices to prove the following two facts.

\noindent (a) For every $I$ and every $g\in G_I$ we have
 $U^g\subset U$.  
 
 \noindent (b) For every $u\in U$ and every $J\in \Is (u)$ we have $\phi_J(U)\subset U$.

 Pick $I$ and $g\in G_I$; as $g$ acts as the identity away from $\ov{F_I}$  we can assume that
 $\ov{F_I} \cap U \neq \emptyset$, i.e. that there exists $p_i$ such that
 $
 \ov{F_I}\cap B_{p_i}(\epsilon)\neq \emptyset.
 $
 By (2) above, this implies that $p_i\in \ov{F_i}$; hence $gp_i\in \pi^{-1}(\ov{p})$ and
 $$
  (\ov{F_I}\cap B_{p_i}(\epsilon))^g=\ov{F_I}\cap B_{gp_i}(\epsilon)\subset  U 
 $$
 (as $G_I$ preserves the metric of $\ov{F_i}$). 
 This proves (a).
To prove (b), let $u\in U\cap F_I$, let $J\in \Is(u)$ and let $\phi_J:\ov{F_I}\to \ov{F_J}$.
As before, there exists $p_i$ such that
 $ 
 u\in \ov{F_I}\cap B_{p_i}(\epsilon).
 $ 
Therefore
$$
\phi_J(\ov{F_I}\cap B_{p_i}(\epsilon))=\ov{F_J}\cap B_{\phi_J(p_i)}(\epsilon).
$$
By what we proved before, $\ov{\pi}(p_i)=\ov{\pi}(\phi_J(p_i))$, hence 
$\ov{F_J}\cap B_{\phi_J(p_i)}(\epsilon)\subset U$. Now (b) is proved, and claim (\ref{sat}) with it.
 This yields that $\ov{\pi}(U)$ and $\ov{\pi}(V)$ are open and disjoint in 
 $\ov{\Mt(\Gamma,w)}$. Since obviously 
 $\ov{p}\in \ov{\pi}(U)$ and  $\ov{q}\in \ov{\pi}(V)$ 
 we are done.
\end{proof}
\begin{remark}
The map $\gamma$ in Proposition~\ref{ovM} (\ref{quot})
identifies isomorphic curves which are not identified by the automorphisms of $(\G,w)$.
The point is: in general,
$\Aut (\Gamma,w)$ does not induce every automorphism of its weighted contractions,
nor does it induces all the isomorphisms between its weighted contractions.


For instance, consider  the graph $(\Gamma,w)$ in Example~\ref{contfig}.
Then the graph in the middle of the picture, $(\Gamma_{/e_1}, w_{/e_1})$,
has an ${\mathcal S}_3$ of automorphisms permuting its three loops; these automorphisms act non trivially on
$\ov{R(\G,w)}$, and are not all induced by automorphisms of $(\G,w)$.

Next, contracting any one of the three loops of $(\Gamma_{/e_1}, w_{/e_1})$ gives three isomorphic
graphs (one of which is drawn at the right of that picture). 
It is clear that $\Aut (\Gamma,w)$ does not act transitively on them.
\end{remark}
\subsection{Construction and properties of $\Mgnt$}
We shall now construct the moduli space of $n$-pointed tropical curves of genus $g$, $\Mgnt$, as a topological space.
Recall that we always assume $2g-2+n\geq 1$.
We set (denoting by
 ``$\cong$" isomorphism of tropical curves)
\begin{equation}
\label{Mg}
\Mgtnn:= \Bigl(\  \bigsqcup_{\stackrel{\Gamma \  3-{\text{regular}}}{b_1(\Gamma)=g, |L(\Gamma)|=n}}\ov{\Mt(\Gamma,\underline{0})}\Bigr)/\cong.
\end{equation}

\begin{remark}
\label{qgr}
Consider the quotient map
\begin{equation}
\label{qg}
\pi_g: \bigsqcup_{\stackrel{\Gamma \  3-{\text{regular}}}{b_1(\Gamma)=g,|L(\Gamma)|=n}}\ov{\Mt(\Gamma,\underline{0})} \la \Mgtnn .
\end{equation}
If $\Gamma$ is    3-regular  with $b_1(\Gamma)=g$,
then $\pi_g^{-1}(\pi_g([(\Gamma,\underline{0},\ell )])) = [(\Gamma,\underline{0}, \ell)]$.
\end{remark}
Comparing with (\ref{MG}) we have another description of $\Mgtn$:
\begin{equation}
\label{Mgg}
\Mgtnn= \Bigl(\  \bigsqcup_{\stackrel{\Gamma \  3-{\text{regular}}}{b_1(\Gamma)=g,|L(\Gamma)|=n}}\ov{R(\Gamma,\underline{0})}\Bigr)/\sim .
\end{equation}
$\Mgtnn$ is defined  as the topological quotient space of a topological space; the two expressions of $\Mgtnn$ as a quotient clearly yield the same topology.
\begin{remark}
$\Mgnt$ {\it is connected.}
Indeed,   every    $\ov{\Mt(\Gamma,\underline{0})}$ appearing in (\ref{Mg})
is connected and
contains the point parametrizing the metric weighted graph with no edges and one vertex of weight $g$; see Example~\ref{origin}. 
We shall prove later, in Proposition~\ref{connt1}, that $\Mgnt$ is actually connected through codimension one.

\end{remark}

  \begin{thm}
  \label{Mgt}
Assume $2g-2+n\geq 1$.
  \begin{enumerate}
\item
The points of $\Mgtnn$ bijectively parametrize isomorphism classes of $n$-pointed tropical curves of genus $g$ (up to tropical equivalence).
\item
  \label{Mgtr}
Let $\Mgrnn\subset \Mgtnn$ be the subset  parametrizing regular curves, i.e.
 $$
 \Mgrnn =\bigsqcup_{\stackrel{\Gamma \  3-{\text{regular}}}{b_1(\Gamma)=g,|L(\Gamma)|=n}}{\Mt(\Gamma,w)}\subset \Mgtnn,
 $$
and 
$\Mgrnn$   is  open and dense.

\item
  \label{Mgtp}
 Let $\Mgpnn$ be the subset  parametrizing pure tropical curves.  
Then  $\Mgpnn$   is   open and dense.

\item
  \label{MgtH}
 $\Mgtnn$ is    Hausdorff.
 \end{enumerate}
\end{thm}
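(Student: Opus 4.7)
The plan is to handle parts (1)-(3) as direct consequences of the construction~(\ref{Mg}), Proposition~\ref{rep3w}, and the cellwise analysis in Proposition~\ref{ovM}, and to tackle part~(\ref{MgtH}) by globalizing the Hausdorff argument of Proposition~\ref{ovM}(3). For part~(1), the starting point is Proposition~\ref{rep3w}(\ref{rep3bw}): every tropical equivalence class is uniquely represented by a stable metric weighted graph $(\Gamma',w',\ell')$. The combinatorial claim to verify is that every stable $(\Gamma',w')$ of genus $g$ with $n$ legs is a weighted contraction of some 3-regular pure graph $(\Gamma,\underline{0})$ of the same genus with $n$ legs; this is achieved by replacing each vertex $v$ of weight $k=w(v)$ with a trivalent pure graph of genus $k$ carrying an appropriate number of external half-edges, and by repeatedly splitting each vertex of valence $\geq 4$ into two trivalent vertices via a new edge. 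Applying Proposition~\ref{ovM}(\ref{ovMc}) then gives $\Mt(\Gamma',w') \subset \ov{\Mt(\Gamma,\underline{0})}$, and the identification by $\cong$ in~(\ref{Mg}) produces the claimed bijection.

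For part~(\ref{Mgtr}), I claim the preimage of $\Mgrnn$ under the quotient map~(\ref{qg}) is exactly $\bigsqcup R(\Gamma,\underline{0})$: any weighted contraction of a 3-regular pure graph $(\Gamma,\underline{0})$ that is again 3-regular and pure must be the trivial contraction, since contracting a loop adds weight by~(\ref{wS}), and if a connected subgraph $T$ of non-loop edges is contracted to a single vertex, its valence in the quotient equals $|V(T)|+2$, which equals $3$ only when $T$ is a single vertex. Openness and density of $\Mgrnn$ follow from the quotient topology. For part~(\ref{Mgtp}), density is immediate from $\Mgrnn \subset \Mgpnn$, and openness follows by identifying the preimage of $\Mgpnn$ in $\ov{R(\Gamma,\underline{0})}$ with the union of faces $F_I$ for which $S_I$ is a forest (by~(\ref{wS}), these are exactly the contractions preserving $\underline{0}$). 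Since $\ov{F_I}=\bigsqcup_{J \supseteq I} F_J$ and any enlargement of a cycle-containing $S_I$ still contains that cycle, the complement is a finite union of closed faces, hence closed.

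The main work lies in part~(\ref{MgtH}), which I approach by globalizing the proof of Proposition~\ref{ovM}(3). I work with the composite quotient $\Pi\colon \bigsqcup \ov{R(\Gamma,\underline{0})} \to \Mgtnn$. The first observation is that $\Pi^{-1}(p)$ is finite for every $p \in \Mgtnn$: there are finitely many 3-regular graphs of genus $g$ with $n$ legs up to isomorphism by Lemma~\ref{max}, each $\ov{R(\Gamma,\underline{0})}$ has finitely many faces, and on each face $\Pi$ has finite fibers by Lemma~\ref{preimage}. Given distinct $p,q \in \Mgtnn$, I choose $\epsilon>0$ small enough that the open $\epsilon$-balls around the (finitely many) lifts of $p$ and $q$ are pairwise disjoint and each ball meets only those faces that contain its center, and let $U$ and $V$ be the unions of these balls, intersected with the domain. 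The main obstacle is to verify the saturation identity $\Pi^{-1}(\Pi(U))=U$ (and similarly for $V$), which then yields disjoint open neighborhoods $\Pi(U)\ni p$ and $\Pi(V)\ni q$. Two kinds of identifications must be controlled: automorphisms acting within a single cone, handled verbatim as in Proposition~\ref{ovM}(3) using Lemma~\ref{preimage}, and isometries between faces living in possibly different cones $\ov{R(\Gamma_a,\underline{0})}$ and $\ov{R(\Gamma_b,\underline{0})}$, induced by isomorphisms between weighted contractions. In both cases the identification permutes $\Pi^{-1}(p)$ and sends $\epsilon$-balls isometrically to $\epsilon$-balls, so that for small enough $\epsilon$ every identification of a point in $U$ with another point already lies within $U$, giving the required saturation.
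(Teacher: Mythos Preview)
Your proposal is correct and follows essentially the same strategy as the paper, with two organizational differences worth noting. For part~(1), the paper reduces to the pure case $w=\underline{0}$ by induction on $|w|$ (adding a loop at a positively weighted vertex and decrementing the weight), then cites \cite[Prop.~A.2.4]{CV} for the base case; your direct construction---inserting a trivalent genus-$k$ subgraph at each weight-$k$ vertex, then repeatedly splitting high-valence vertices---is equally valid and more self-contained. For part~(\ref{MgtH}), the paper uses the Hausdorffness of each $\ov{\Mt(\Gamma,\underline{0})}$ from Proposition~\ref{ovM}(3) as a black box and then argues that the finitely many Hausdorff pieces can be glued compatibly, whereas you drop one level down to the cones $\ov{R(\Gamma,\underline{0})}$ and globalize the saturation argument of Proposition~\ref{ovM}(3) directly; both routes rest on the same finiteness (Lemma~\ref{preimage}, Lemma~\ref{max}) and on the fact that all identifications are induced by coordinate-permutation isometries of closed faces.

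One small point in your argument for part~(\ref{Mgtr}): the valence formula $|V(T)|+2$ for the image of a contracted connected subgraph $T$ holds only when $T$ is a tree; in general the valence is $|V(T)|+2-2b_1(T)$. You should first observe that purity of the contraction forces every connected component of $S$ to be a tree---by~(\ref{wS}) any cycle in $S$, whether a single loop or a longer cycle of non-loop edges, contributes positive weight---and only then invoke the valence count to rule out nontrivial trees. The conclusion is unaffected.
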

  
\begin{proof}
Let  $(\Gamma, w,\ell)$ be a stable metric graph of genus $g$ with $n$ legs,
and let us prove that its isomorphism class corresponds to a point in $\Mgtnn$.
This amounts to showing that $(\Gamma, w,\ell)$ is in the closure of $R(\Gamma_0,\underline{0})$ for some 3-regular graph
$\Gamma_0$   with $b_1(\Gamma_0)=g$.
In other words, by Lemma~\ref{cl},
we must prove that $(\Gamma,w)$ is a weighted contraction of $(\Gamma_0,\underline{0})$,  with $\Gamma_0$ a 3-regular graph.

Suppose first that $w=\underline{0}$. A proof of this fact is the proof of \cite[Prop. A.2.4] {CV}(that Proposition is concerned with 3-edge-connected curves,
but the proof is easily seen to work  in this case).

 We continue by induction on $|w|$, the basis being the case $w=\underline{0}$, which we just did.
 Let us now assume that $w(v_1)\geq 1$ for some $v_1\in V(\Gamma)$.
 Let $(\Gamma', w' )$ be the genus $g$ stable graph defined as follows:
 $\Gamma'$ is obtained from $\Gamma$ by just adding a loop, $e_0$, based at $v_1$.
 Therefore we have
 $V(\Gamma')=V(\Gamma)$ and $E(\Gamma')=E(\Gamma)\cup\{e_0\}$.
 Now let
$$w'(v)=\begin{cases}
w(v) & \text{ if $v\neq v_1$}   \\
w(v_1)-1 & \text{ otherwise.}
\end{cases}
$$
  It is clear that $(\Gamma, w)$ is a weighted contraction of $(\Gamma',w')$,
indeed 
$$
 (\Gamma, w) =(\Gamma'/e_0,w'/e_0).
$$
We can apply induction, as $|w'|=|w|-1$; hence   $(\Gamma',w')$ is a weighted contraction of $(\Gamma_0,\underline{0})$
  for some 3-regular graph
$\Gamma_0$, i.e
$ 
(\Gamma',w')=((\Gamma_0)_{/S},\underline{0}_{/S})
$ 
for some $S\subset E(\Gamma_0)$.
 Now,  $e_0\in E(\Gamma')\subset E(\Gamma_0)$, hence, denoting $S_0=\{e_0\}\cup S$, we have
$ 
(\Gamma ,w )=(\Gamma_0 \;_{/S_0},\underline{0}_{/S_0}).
$ 
The first part is proved.

Let us  prove (\ref{Mgtr}); 
the description of $\Mgrnn$ follows from Lemma~\ref{max}. Now
fix a 3-regular graph $\Gamma$;
by Proposition~\ref{ovM} we know that $\Mt(\Gamma,\underline{0})$ is open and dense in $ \ov{\Mt(\Gamma,\underline{0})}$. 
Consider the definition of $\Mgtnn$ given in (\ref{Mg}).
Pick a point  $[(\Gamma, \underline{0},\ell)]\in \Mt(\Gamma,\underline{0})$;  by Remark~\ref{qgr},  
 the map $\pi_g$ induces a homeomorphism of 
$\Mt(\Gamma,\underline{0})$ with its image such that 
$$
\pi_g^{-1}(\pi_g(\Mt(\Gamma,\underline{0})) =\Mt(\Gamma,\underline{0}).
$$

Therefore $\Mt(\Gamma,\underline{0})$ is open in $\Mgtnn$. 
Moreover, the union of the $\Mt(\Gamma,\underline{0})$  as $\Gamma$ runs through 
all 3-regular graphs with $b_1(\Gamma)=g$ is obviously dense in $\Mgtnn$.
Therefore the above union is
open and dense.

Now  (\ref{Mgtp}). 
  We have, of course
  $$
  \Mgrnn\subset \Mgpnn\subset \Mgtnn.
  $$
Hence $\Mgpnn$   is dense by part ~(\ref{Mgtr}).
Now recall that
$$
\Mgpnn=\{[(\Gamma,w,\ell)]: b_1(\Gamma)=g\}=\{[(\Gamma,w,\ell)]: |w|=0\}.
$$
Let us denote by
$ 
R(\Gamma,w)^+\subset \ov{R(\Gamma,w)}
$ the union of all  loci  corresponding to   weighted graphs $(\Gamma',w')$
such that $b_1(\Gamma')<g$ (cf. Lemma~\ref{cl}). The set $R(\Gamma,w)^+$ is closed,
as the first Betti number does not grow under   edge contraction.
Hence its complement, the locus  parametrizing pure tropical curve, is open. Hence
the locus in $ \ov{\Mt(\Gamma,\underline{0})}$ corresponding to pure tropical curves is also open.

Let now ${\ov{p}}\in \Mgpnn\subset\Mgtnn$
and let $(\Gamma_p, \underline{0})$ be its supporting graph. For every 3-regular graph $\Gamma$ with $b_1(\Gamma)=g$
let $p_{\Gamma}\in \ov{\Mt(\Gamma,\underline{0})}$ be the preimage of $p$.
By what we just said $p_{\Gamma}$ admits an open neighborhood $U_{\Gamma}\subset \ov{\Mt(\Gamma,\underline{0})}$
such that $U_{\Gamma}$ parametrizes only pure tropical curves.
Up to shrinking each $U_{\Gamma}$ around $p_{\Gamma}$ we can assume that
$\pi_g (U_{\Gamma})=\pi_g (U_{\Gamma'})$ for all such $\Gamma$ and $\Gamma'$.
But then 
$$
\pi_g(\cup U_{\Gamma})\subset \Mgpnn,\  \  \text{and}\  \  \pi_g^{-1}(\pi_g(\cup U_{\Gamma}))=\cup U_{\Gamma}
$$
where the union is over all 3-regular graphs $\Gamma$ with $b_1(\Gamma)=g$. This implies that
$\pi_g(\cup U_{\Gamma})$ is an open neighborhood of  ${\ov{p}}$ all contained in $\Mgpnn$.
Part (\ref{Mgtp}) is proved.

Finally, let us show that $\Mgtn$ is   Hausdorff.
The quotient map  $\pi_g$
(\ref{qg}) induces a bijection of $\ov{\Mt(\Gamma,\underline{0})}$ with its image.
Therefore, by Proposition~\ref{ovM}, $\Mgtn$ is obtained by gluing together finitely many Hausdorff spaces.
Let $\ov{p},\ov{q}\in \Mgtn$;   
for every 3-regular $\Gamma$ with $b_1(\Gamma)=g$
let $p_{\Gamma}=
\ov{\Mt(\Gamma,\underline{0})}\cap \pi_g^{-1}(\ov{p})$ 
and $q_{\Gamma}= 
\ov{\Mt(\Gamma,\underline{0})}\cap \pi_g^{-1}(\ov{q})$ 
and pick, in  the Hausdorff space $\ov{\Mt(\Gamma,\underline{0})}$, disjoint open neighborhoods of the two points:
$p_{\Gamma}\in U_{\Gamma},q_{\Gamma}\in V_{\Gamma}$.
If $\pi_g^{-1}(\ov{p})$ doesn't  intersect $\ov{\Mt(\Gamma,\underline{0})}$ we don't do anything, similarly for
$\ov{q}$. 
Arguing as before, we can assume that $\pi_g(U_{\Gamma})=\pi_g(U_{\Gamma'})$
for all $\Gamma$ and $\Gamma'$ as above, hence 
  $\pi_g(U_{\Gamma})$ is open; similarly for $V_{\Gamma}$.
Then,   $\pi_g(U_{\Gamma})$ and $\pi_g(V_{\Gamma})$ are open disjoint neighborhoods of
$\ov{p}$ and $\ov{q}$ in $\Mgtn$.
\end{proof}
\begin{remark}
\label{dim}
For every stable graph $(\Gamma,w)$ as above, the space $\Mt(\Gamma, w)$
is  the quotient of the topological manifold $R(\Gamma, w)$
by the finite group $\Aut(\Gamma,w)$. Its dimension can thus be defined as follows:
$$
\dim \Mt(\Gamma, w):=\dim R(\Gamma, w)=|E(\Gamma)|.
$$

More generally, let $X$ be a topological space   containing a dense open subset which is an orbifold
 of dimension $n$ (locally the quotient of an $n$-dimensional topological manifold by a finite group); then  we say that $X$ {\it has pure dimension} $n$. 
\end{remark}
\begin{remark}
\label{puremt}
{\it $\Mgtn$ has pure dimension equal to $3g-3+n$.}

Indeed, by Theorem~\ref{Mgt}   the moduli space of regular curves, $\Mgrnn$,
is open and dense in $\Mgtn$. Now, $\Mgrnn$ is the  disjoint union of finitely many
spaces of type $\Mt(\Gamma, w)$.
By what we just observed, each of these spaces has dimension $|E(\Gamma)|$;
 by Lemma~\ref{max} we have $|E(\Gamma)|=3g-3+n$.
\end{remark}

\begin{example}
Suppose $g=1$, hence $n\geq 1$.
The case $n=1$ is very simple: $\Mt_{1,1}$ is homeomorphic to $\R_{\geq 0}$
with the point $0$ identified with the curve with no edges and a vertex of weight $1$
(see Figure~\ref{11fig}). 
In the next pictures
we list all the combinatorial types in cases $n=1$ and $n=3$; the $0$-weight vertices are 
pictured as ``$\circ$", whereas   the $1$-weight vertices 
are pictured as ``$\bullet$", together  with a ``$+1$'' next to them.
The dashed arrows represent specializations, i.e. weighted contractions.
 \begin{figure}[h]
\begin{equation*}
\xymatrix@=.5pc{
&&&&&&&&&&&&&\\
&&*{\circ} \ar@{-}@(ul,dl) \ar@{-}[r] &&&&\ar @{-->}[rrrr]&&&&&&& &*{\bullet} \ar@{-}[r]^<{+1} &&\\
&&  \dim 1 &&&&&&&&&&&& \dim 0\\
}
\end{equation*}
\caption{The case $g=1$, $n=1$}\label{11fig}
\end{figure}
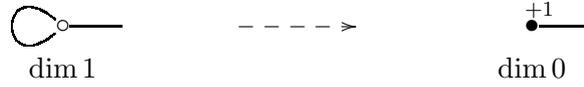

\newpage
Next is the case $n=3$.
It is clear that $\Mt_{1,3}$ is not a manifold.
In the picture the notation ``$\times 2$", or ``$\times 3$", next to a specialization arrow means that the specialization is obtained in two, or three, different ways, i.e. by contracting two, or three, different edges.  

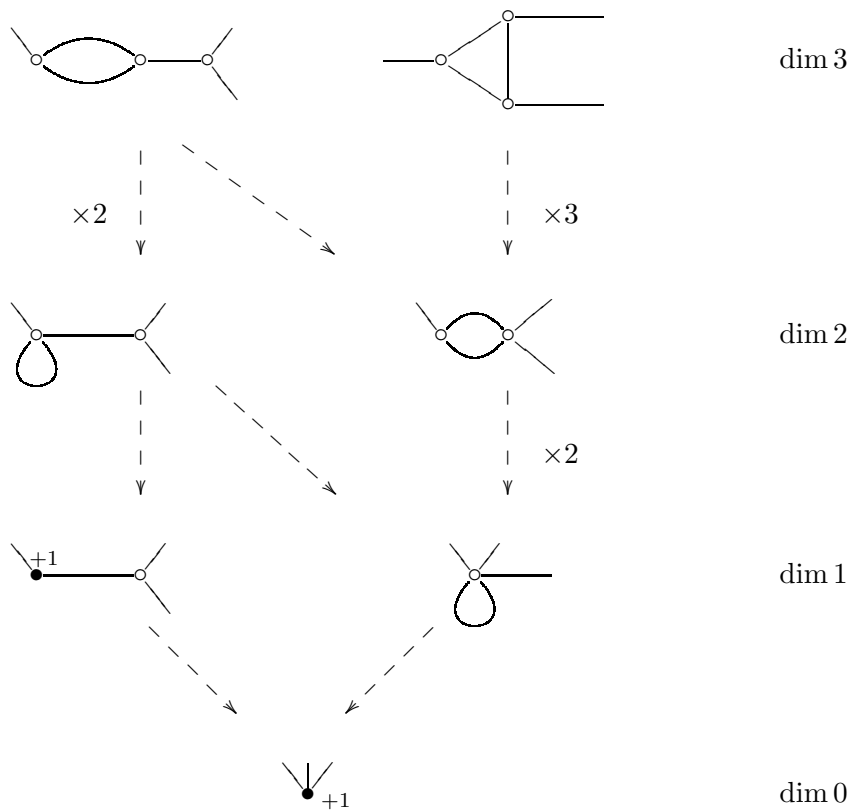
\begin{figure}[h]
\label{1fig}
\begin{equation*}
\xymatrix@=.5pc{
&&&&&&&&&&&&&&*{\circ}\ar@{-}[dll]\ar@{-}[rr]&&\\
 &*{\circ} \ar@{-}[ul]\ar @{-} @/_.7pc/[rr] \ar@{-} @/^.7pc/[rr]
&& *{\circ}\ar@{-}[rr]   & & *{\circ}\ar@{-}[dr]\ar@{-}[ur]
& &&&&&&
*{\circ}\ar@{-}[ll] 
&&&&&&&&&\dim 3 \\
&&&&&&&&&&&&&&*{\circ}\ar@{-}[ull]\ar@{-}[uu] \ar@{-}[rr]&&\\
 &&& \ar @{-->}[ddd]&\ar @{-->}[dddrrrrr] &  &&&&&&&&& \ar @{-->}[ddd]&
\\
\\
&&{\times 2}&&&&&&&&&&&&&{\times 3}\\
&&&&&&&&&&&&&&&&&\\
&&&&&&&&&&&&&&&&&\\
&*{\circ}  \ar@{-}[ul]\ar@{-}@(dr,dl) \ar@{-}[rr]&& *{\circ}\ar@{-}[dr]\ar@{-}[ur]&&&&&& &&
 &*{\circ} \ar@{-}[ul]\ar @{-} @/_.7pc/[rr] \ar@{-} @/^.7pc/[rr]
&& *{\circ}\ar@{-}[ur] \ar@{-}[dr]&&&&&&&\dim 2 \\
&&& \ar @{-->}[ddd] && \ar @{-->}[dddrrrr] &&&&&&&&& \ar @{-->}[ddd]
 &&&&&&& \\
&&&&&&&&&&&&&&&\\
&&&&&&&&&&&&&&&{\times 2}\\
&&&&&&&&&&&&&&&&&\\
&&&&&&&&&&&&&&&&&&&\\
&*{\bullet}\ar@{-}[ul] \ar@{-}[rr]^<{+1}&& *{\circ}\ar@{-}[ur] \ar@{-}[dr]&&&&&&&&&&
 *{\circ}\ar@{-}[ul]\ar@{-}@(dr,dl) \ar@{-}[ur] \ar@{-}[rr] &&&&&&&&\dim 1\\
&&& \ar @{-->}[dddrrr]  &&&&&&&&&  \ar @{-->}[dddlll]
\\
&&&&&&&&&&&&&\\
&&&&&&&&&&&&&\\
&&&&&&&&&&&&&\\
&&&&&&&&&&&&&\\
&&&&&&&& *{\bullet}\ar@{-}[ur]_<{+1}  \ar@{-}[ul] \ar@{-}[u]&&&&&&&&&&&&&\dim 0
}
\end{equation*}
\caption{The case $g=1$, $n=3$} 
\end{figure}
\end{example}
\subsection{Compactification of the moduli space of tropical curves}
\label{comp}
In this subsection we describe a natural compactification of the moduli space of tropical curves,
following an  indication  of G.Mikhalkin 
(see \cite{MIK5} for example).
\begin{defi}
An {\it extended, $n$-pointed tropical curve} (up to tropical equivalence)
of genus $g$ will be a 
triple $(\Gamma, w,\ell)$  where
$(\Gamma, w)$  is a stable graph of genus $g$ with $n$ labeled legs,
and   
$$
\ell:E(\Gamma)\cup L(\Gamma)\to \R_{> 0}\cup\{ \infty\}
$$
is  a     length  function on the edges such that $\ell(x)=\infty$ for every $x\in L(\Gamma)$. 
\end{defi}
We are thus generalizing  the previous definition by   allowing the length of any edge to be infinite.

\begin{remark}
In the sequel, we view $\R\cup\{ \infty\}$ as a topological space with the Alexandroff one-point topology (i.e. the open sets are all the usual open subsets of $\R$, and
all the complements  of compact subsets of $\R$).
Therefore, for every $n$ the spaces
$(\R\cup\{ \infty\})^n$ are compact, Hausdorff spaces.
Hence  also the spaces 
 $(\R_{\geq  0}\cup\{ \infty\})^n$
are are compact and Hausdorff.
\end{remark}
The definitions 
of isomorphism, or automorphism, of extended tropical curves are given exactly in the same way as 
for tropical curves.
In particular,  
the
  automorphism group of an extended  tropical curve is finite.
For a stable graph $(\Gamma,w)$ we
denote 
$$R_{\infty}(\Gamma,w):=(\R_{> 0}\cup\{ \infty\})^{E(\Gamma)};$$
every  element in
$ R_{\infty}(\Gamma,w)$ corresponds to an extended tropical curve
whose underlying graph is  $(\Gamma, w)$.
We extend    the notation for tropical curves as follows.
We write $\Mt_{\infty}(\Gamma,w)$ for the set of isomorphism classes of extended tropical curves having $(\Gamma, w)$
as underlying graph; hence we have a surjection
$$
R_{\infty}(\Gamma,w)\la \Mt_{\infty}(\Gamma,w).
$$
The closure of $R_{\infty}(\Gamma,w)$ inside $(\R\cup\{ \infty\})^{E(\Gamma)}$ is denoted by
$$
\ov{R_{\infty}(\Gamma,w)}:=(\R_{\geq  0}\cup\{ \infty\})^{E(\Gamma)}.
$$
Lemma~\ref{cl} trivially extends,
so that we have a decomposition  
$$
\ov{R_{\infty}(\Gamma,w)}=\bigsqcup_{I\subset \{1,2,\ldots,E(\Gamma)\}}\ov{R_{\infty}(\Gamma,w)}_I
$$
where for every $I$ we have 
$ 
\ov{R_{\infty}(\Gamma,w)}_I\cong R_{\infty}(\Gamma_{/S_I},w_{/S_I}).
$ 

 We continue   
 by introducing the quotient space
$$
\ov{\pi_{\infty}}:\ov{R_{\infty}(\Gamma,w)}\la \ov{\Mt_{\infty}(\Gamma,w)}:=\ov{R_{\infty}(\Gamma,w)}/\sim 
$$
 where 
$ 
p_1\sim p_2$ if and only if
the 
 extended tropical curves parametrized by $p_1$ and $p_2$ are isomorphic.
 Naturally, $\ov{\Mt_{\infty}(\Gamma,w)}$ is endowed with the quotient topology
 induced by $\ov{R_{\infty}(\Gamma,w)}$.
The finite group $\Aut(\Gamma,w)$
acts   on $\ov{R_{\infty}(\Gamma,w)}$ by permuting the coordinates, and hence
it acts as group of homeomorphisms.
 Proposition~\ref{ovM}
extends:
\begin{prop}
\label{ovMg}
Let $(\Gamma,w)$  be a stable graph.
\begin{enumerate}
\item There is a decomposition
$$
\ov{\Mt_{\infty}(\Gamma,w)}=\bigsqcup_{(\Gamma',w')\leq (\Gamma,w)} \Mt_{\infty}(\Gamma',w'),
$$
where $\Mt_{\infty}(\Gamma,w)$ is   open and dense  in $\ov{\Mt_{\infty}(\Gamma,w)}$.
\item The  map $\ov{\pi_{\infty}}$ has finite fibers.
\item
 $\ov{\Mt_{\infty}(\Gamma,w)}$ is  a compact Hausdorff topological space.
 \end{enumerate} 
\end{prop}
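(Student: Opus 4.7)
The strategy is to parallel Proposition~\ref{ovM} step by step, working in the compact Hausdorff space $\ov{R_\infty(\Gamma,w)} = (\R_{\geq 0}\cup\{\infty\})^{E(\Gamma)}$, which carries the product topology of the one-point compactification. Part (1) is essentially formal. The extended version of Lemma~\ref{cl} given in the excerpt partitions $\ov{R_\infty(\Gamma,w)}$ into faces $\ov{R_\infty(\Gamma,w)}_I \cong R_\infty(\Gamma_{/S_I}, w_{/S_I})$ indexed by $I\subset\{1,\dots,|E(\Gamma)|\}$, and the equivalence relation $\sim$ respects this stratification. Passing to the quotient yields the decomposition indexed by weighted contractions $(\Gamma',w')\leq(\Gamma,w)$. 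Since $R_\infty(\Gamma,w)$ is open and dense in $\ov{R_\infty(\Gamma,w)}$ and coincides with $\ov{\pi_\infty}^{-1}(\Mt_\infty(\Gamma,w))$, the piece $\Mt_\infty(\Gamma,w)$ is open and dense.

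For part (2), I would extend Lemma~\ref{preimage} verbatim to the compactified setting. Let $p\in\ov{R_\infty(\Gamma,w)}_I$; define, for each $J\in\Is(I)$, the coordinate-permutation isometry $\phi_J:\ov{F_I}\to\ov{F_J}$ coming from a fixed isomorphism $(\Gamma_{/S_I},w_{/S_I})\cong(\Gamma_{/S_J},w_{/S_J})$. The same argument as before shows that $\ov{\pi_\infty}^{-1}(\ov{\pi_\infty}(p)) = \{g\phi_J(p) : J\in\Is(p),\ g\in G_J\}$, which is finite because each $G_J = \Aut(\Gamma_{/S_J}, w_{/S_J})$ is finite (Remark~\ref{aut}) and the index set $\Is(p)$ is finite.

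Compactness of $\ov{\Mt_\infty(\Gamma,w)}$ is the easiest new ingredient: $\ov{R_\infty(\Gamma,w)}$ is a finite product of copies of the compact space $\R_{\geq 0}\cup\{\infty\}$, hence compact, and the continuous surjection $\ov{\pi_\infty}$ sends it onto $\ov{\Mt_\infty(\Gamma,w)}$.

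The main obstacle will be Hausdorffness, since the Euclidean $\epsilon$-ball argument of Proposition~\ref{ovM} needs to be adapted to coordinates that may equal $\infty$. The ambient space $\ov{R_\infty(\Gamma,w)}$ is already compact Hausdorff, and $\Aut(\Gamma,w)$ acts on it by coordinate-permuting homeomorphisms (in particular sending faces to faces). Given distinct $\ov{p},\ov{q}\in\ov{\Mt_\infty(\Gamma,w)}$ with finite preimages $\{p_i\}$ and $\{q_j\}$, I would choose, for each pair, disjoint \emph{product-topology} open boxes $B_{p_i}$ and $B_{q_j}$ of the form $\prod_{e\in E(\Gamma)} U_e$ where each $U_e$ is a small open interval around a finite coordinate or a neighborhood of $\infty$ of the form $(N,\infty]$; one can shrink these so that each box meets only those faces $\ov{F_I}$ that already contain the corresponding preimage point (the analog of property (2) in the proof of Proposition~\ref{ovM}). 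Setting $U = \bigcup_i B_{p_i}$ and $V = \bigcup_j B_{q_j}$, one then verifies saturation: for every $J\in\Is(I)$ and $g\in G_J$, the homeomorphism $g\circ\phi_J$ carries the pieces of $U$ lying in $\ov{F_I}$ to pieces lying in $\ov{F_J}$ that are again contained in $U$, exactly as in Proposition~\ref{ovM}, because $\phi_J$ permutes coordinates and thus maps product boxes to product boxes. Saturation then forces $\ov{\pi_\infty}(U)$ and $\ov{\pi_\infty}(V)$ to be disjoint open neighborhoods of $\ov{p}$ and $\ov{q}$, completing the proof.
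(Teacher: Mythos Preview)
Your proposal is correct and follows essentially the same route as the paper: compactness is immediate from the quotient of a compact space, parts (1) and (2) carry over verbatim from Proposition~\ref{ovM}, and for Hausdorffness the paper replaces the Euclidean balls $B_p(\epsilon)$ by product-box neighborhoods $A_p(\epsilon,\eta)$ (intervals around finite coordinates, complements of $[-\eta,\eta]$ around infinite ones), which is exactly what you describe. The saturation argument via the coordinate-permuting maps $g\circ\phi_J$ is then identical to the one in Proposition~\ref{ovM}.
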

\begin{proof}
The only new statement with respect to  Proposition~\ref{ovM}
is the compactness of $\ov{M_{\infty}(\Gamma,w)}$,
which follows from its being a quotient of the compact topological space $\ov{R_{\infty}(\Gamma,w)}$.

For the rest,  it suffices  to explain what  needs to be modified in the proof of Proposition~\ref{ovM},
whose notation we continue to use here.
One easily checks that
the only part where some changes are needed is the proof of Hausdorffness.
Indeed, we need to replace the open neighborhoods $B_{p}(\epsilon)$
with neighborhoods of a different type.
To do that, pick a point 
$p=(t_1,\ldots, t_{|E(\Gamma)|})\in \ov{R_{\infty}(\Gamma,w)}$, and any 
pair of positive real numbers $\epsilon, \eta$; 
we define the following open neighborhood of $p$
$$
A_p(\epsilon, \eta)=\prod_{j=1}^{|E(\Gamma)|}A_p(\epsilon, \eta)_j\subset (\R_{\geq 0}\cup \{ \infty\})^{E(\Gamma)}
$$
where $A_p(\epsilon, \eta)_j\subset \R_{\geq 0}\cup \{ \infty\}$ is the following open subset
$$A_p(\epsilon, \eta)_j=\begin{cases}
(t_j-\epsilon, t_j+\epsilon)\cap \R_{\geq 0}  & \text{ if }  t_j\in \R    \\
\R_{\geq 0}\cup \{ \infty\}\smallsetminus [-\eta,  +\eta] &  \text{ if }  t_j=\infty.
\end{cases}
$$
It is clear that $A_p(\epsilon, \eta)$ is open for all $p$, $\epsilon$ and $ \eta$ as above.

Now we show that $\ov{\Mt_{\infty}(\Gamma,w)}$ is Hausdorff,
 by  small modifications of the proof of Proposition~\ref{ovM}. 
Let $\ov{p}$ and $\ov{q}$ be two distinct points in  $\ov{M_{\infty}(\Gamma,w)}$. Let $\pi_{\infty}^{-1}(p)=\{p_1,\dots, p_n\}$
and $\pi_{\infty}^{-1}(q)=\{q_1,\dots, q_m\}$. It is easy to check that there
  exist  $\epsilon  \in \R_{>0}$ small enough and $\eta \in \R$ big enough so that  
 
\noindent
 (1) $A_{p_i}(\epsilon, \eta)\cap  A_{q_j}(\epsilon, \eta)=\emptyset$
  for every $i=1,\ldots, n$ and $j=1,\ldots, m$.

\noindent
(2) If $\ov{F_{\infty, I}}\cap A_{p_i}(\epsilon, \eta)\neq \emptyset$ then $p_i\in \ov{F_{\infty, I}}$. 
If 
 $\ov{F_{\infty, I}}\cap A_{q_j}(\epsilon, \eta)\neq \emptyset$ then $q_j\in \ov{F_{\infty, I}}$,
 where $ F_{\infty, I}:= {R_{\infty}(\Gamma,w)}_I$.

Set $U :=\cup_{i=1}^n A_{p_i}(\epsilon, \eta)$ and
$V=\cup_{j=1}^mA_{q_j}(\epsilon, \eta)$. 
It is clear that $U$ and $V$ are open and disjoint. We claim that 
\begin{equation}
\label{satg}
\ov{\pi}_{\infty}^{-1} (\ov{\pi}_{\infty}(U))=U \quad \text{and} \quad \ov{\pi}_{\infty}^{-1} (\ov{\pi}_{\infty}(V))=V.
 \end{equation}
 The proof is identical to the proof of the analogous claim  used for Proposition~\ref{ovM}.
 And (\ref{satg}) implies the statement, as in Proposition~\ref{ovM}.
  \end{proof}

Similarly to what we did in (\ref{Mgt}) we set
\begin{equation}
\label{Mgtb}
\Mgtnb:= \Bigl(\  \bigsqcup_{\stackrel{\Gamma \  3-{\text{regular}}}{b_1(\Gamma)=g}}\ov{\Mt_{\infty}(\Gamma,\underline{0})}\Bigr)/\cong
\end{equation}
where $\cong$ is isomorphism of extended tropical curves.
We obviously have a quotient map extending the map (\ref{qg})
$$
\ov{\pi_g}: \bigsqcup_{\stackrel{\Gamma \  3-{\text{regular}}}{b_1(\Gamma)=g}}\ov{\Mt_{\infty}(\Gamma,\underline{0})} \la \Mgtnb\supset \Mgtn 
$$
so that $\Mgtnb$ has the quotient topology induced by $\ov{\pi_g}$.
If $\Gamma$ is a  3-regular graph with $b_1(\Gamma)=g$,
then $\pi_g^{-1}(\pi_g([(\Gamma,\underline{0}, l)])) = [(\Gamma,\underline{0}, l])$.
 Extending Theorem~\ref{Mgt} we have
\begin{thm}
  \label{Mgtbthm} 
  Let $g\geq 2$.
 The points of   $\Mgtnb$ bijectively parametrize isomorphism classes of extended $n$-pointed tropical curves
 of genus $g$.
 The   topological space $\Mgtnb$ is   compact and Hausdorff. 
 Furthermore, it has pure dimension  $3g-3+n$ and it   is connected through codimension one\footnote{See  Definition~\ref{connconn}}.
\end{thm}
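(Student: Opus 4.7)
The plan is to handle the four claims in turn, recycling the machinery already built for $\Mgtn$. For the bijection between points of $\Mgtnb$ and isomorphism classes of extended $n$-pointed tropical curves, I would run verbatim the induction on $|w|$ from the proof of Theorem~\ref{Mgt}: every stable weighted graph of genus $g$ with $n$ legs is a weighted contraction of some $3$-regular, weight-zero graph, and this argument is purely combinatorial, independent of whether the lengths lie in $\R_{>0}$ or in $\R_{>0}\cup\{\infty\}$. Using the obvious extension of Lemma~\ref{cl} to $\ov{R_{\infty}(\Gamma,\underline 0)}$, every extended combinatorial type thus appears in some cell $\ov{\Mt_{\infty}(\Gamma,\underline 0)}$, and the quotient by $\cong$ in \eqref{Mgtb} identifies the remaining isomorphic representatives across different cells.

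For compactness and Hausdorffness, I would observe that there are only finitely many isomorphism classes of $3$-regular graphs of genus $g$ with $n$ legs (immediate from Lemma~\ref{max}), so $\Mgtnb$ is a quotient of the finite disjoint union of the compact Hausdorff spaces $\ov{\Mt_{\infty}(\Gamma,\underline 0)}$ furnished by Proposition~\ref{ovMg}; compactness is then automatic. Hausdorffness follows by transcribing the argument of Theorem~\ref{Mgt}(\ref{MgtH}) almost verbatim, using the boxes $A_p(\epsilon,\eta)$ built in the proof of Proposition~\ref{ovMg} in place of Euclidean balls, and exploiting the finiteness of the fibers of $\ov{\pi_g}$ to cover each preimage with finitely many disjoint saturable neighborhoods.

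Pure dimension $3g-3+n$ will be handled exactly as in Remark~\ref{puremt}. The regular locus $\Mgrnn$ sits inside $\Mgtn\subset\Mgtnb$; it is open because its $\ov{\pi_g}$-preimage in each cell is the open sub-box with all coordinates in $\R_{>0}$, and it is dense because $\Mgrnn$ is dense in $\Mgtn$ by Theorem~\ref{Mgt}(\ref{Mgtr}) while $\Mgtn$ is plainly dense in $\Mgtnb$ (finite tuples are dense in $(\R_{\geq 0}\cup\{\infty\})^{E(\Gamma)}$). By Lemma~\ref{max}, each $\Mt(\Gamma,\underline 0)$ in the decomposition of $\Mgrnn$ has dimension $|E(\Gamma)|=3g-3+n$, which forces pure dimension $3g-3+n$ on all of $\Mgtnb$.

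The main obstacle will be connectedness through codimension one. My plan is to express $\Mgtnb$ as the finite union of its top-dimensional cells $\ov{\Mt_{\infty}(\Gamma,\underline 0)}$ with $\Gamma$ $3$-regular of genus $g$ with $n$ legs; each such cell is connected, being a continuous image of the polytope $\ov{R_{\infty}(\Gamma,\underline 0)}$. Two such cells share a codimension-one stratum exactly when $\Gamma$ and $\Gamma'$ are related by a Whitehead move, i.e.\ when contracting a single non-loop edge in each produces the same stable graph with one $4$-valent vertex. The statement then reduces to the classical combinatorial fact -- going back to Culler-Vogtmann's Outer Space, discussed in Section~\ref{secteich} -- that the set of $3$-regular genus $g$ graphs with $n$ legs is connected under Whitehead moves. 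The delicate verification is that the shared stratum really has codimension one, which follows from Lemma~\ref{max} applied to the contracted graph, giving edge count $3g-4+n$; granting the Culler-Vogtmann input and the forthcoming Definition~\ref{connconn}, the chain of top cells joined along such strata delivers the desired connectedness.
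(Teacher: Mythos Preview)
Your proposal is correct and follows essentially the same route as the paper: compactness from Proposition~\ref{ovMg} and finiteness of the union, Hausdorffness by transplanting the box-neighborhoods $A_p(\epsilon,\eta)$ into the argument of Theorem~\ref{Mgt}(\ref{MgtH}), pure dimension via Remark~\ref{puremt}, and connectedness through codimension one by the Whitehead-move connectivity of $3$-regular graphs. The only discrepancy is bibliographic: the paper does not extract this last fact from the Culler--Vogtmann material of Section~\ref{secteich} but rather states it as Fact~\ref{linkage} (Hatcher--Thurston) inside the proof of Proposition~\ref{connt1}, to which the paper's proof of Theorem~\ref{Mgtbthm} simply defers.
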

  \begin{proof}
  By Proposition~\ref{ovMg} the spaces $ \ov{\Mt_{\infty}(\Gamma,\underline{0})} $
  are all compact. Therefore 
 $\Mgtnb$ is  the quotient of a compact space
(the union in (\ref{Mgtb}) is   finite) and hence it is 
 compact.
The proof of its being connected through codimension one is the same as for $\Mgtn$, given in \ref{connt1};
indeed that proof is concerned with   the underlying weighted graphs, and completely ignores the length function.
The proof of its being Hausdorff 
 is a simple modification of the proof of  Theorem~\ref{Mgt}, along the same lines as the modifications used to prove Proposition~\ref{ovMg}
\end{proof}

\section{Comparing  moduli spaces.}
\label{compsec}
\subsection{Dual graph of a curve and  combinatorial partition of $\Mgnb$.}
We recall the definition of a useful graph associated to a pointed  nodal curve. \begin{defi}
The {\it (weighted) dual graph} of a pointed  nodal curve
$(X,\underline{p})=(X;p_1,\ldots,p_n)$
 is  the weighted graph $(\Gamma_{(X,\underline{p})},w_{(X,\underline{p})})$ with  $n$ legs
such that $\Gamma_{(X,\underline{p})}$ has a vertex for every irreducible component of $X$ and an edge for every node of $X$ joining the two (possibly equal) vertices corresponding to the components on which the node lies;
for every point $p_i$ there is a leg of $\Gamma_{(X,\underline{p})}$ adjacent to the vertex corresponding to the component containing $p_i$.
The weight function $w_{(X,\underline{p})}$ assigns to every vertex the geometric genus of the corresponding component.
\end{defi}
So the (arithmetic) genus $g$ of $X$
 is the same as that of  its dual graph
$$
g=
b_1(\Gamma_{(X,\underline{p})})+\sum_{v\in V(\Gamma_{(X,\underline{p})})}w_{(X,\underline{p})}(v)=g(\Gamma_{(X,\underline{p})}, w_{(X,\underline{p})}).
$$
\begin{remark}
\label{stablec}
The $n$-pointed curve $(X;\underline{p})$ is stable  if and only if its  weighted dual graph
$(\Gamma_{(X,\underline{p})},w_{(X,\underline{p})})$   is  stable. 
\end{remark}

\begin{example}
Assume $g=0$ and $n=4$. We described all stable $4$-pointed rational  curves in Example~\ref{04}.
Their dual graphs are pictured in Example~\ref{04g}.
\end{example}

Now, for any stable  graph $(\Gamma,w)$ of genus $g$ with $n$ labeled legs, we denote by $M^{\rm{alg}}(\Gamma, w)\subset \Mgnb$ the   locus of $n$-pointed curves with $(\Gamma, w)$ as dual graph:
$$
M^{\rm{alg}}(\Gamma, w):=\{(X;\underline{p})\in \Mgnb: (\Gamma_{(X,\underline{p})},w_{(X,\underline{p})})= (\Gamma ,w)\}\footnote{The superscript ``${\rm{alg}}$''  stands for ``algebraic'' and is used to keep the distinction with $\Mt(\Gamma, w)$.}.
$$ 
The following is    the {\it combinatorial partition} of $\Mgnb$:
\begin{equation}
\label{stratMg}
\Mgnb=\bigsqcup_{(\Gamma,w) {\text{ stable, $n$ legs,    genus }} g}M^{\rm{alg}}(\Gamma, w).
\end{equation}
The next well known fact is an easy consequence of Fact~\ref{mgnb}.

\begin{lemma}\label{dimstrata}
Assume  $2g-2+n>0$.
Then for any stable graph of genus $g$ with $n$ legs, $(\Gamma,w)$, we have that
$M^{\rm{alg}}(\Gamma,w)$ is an irreducible quasiprojective variety and its  codimension in
$\Mgnb$ is equal to $|E(\Gamma)|$.
\end{lemma}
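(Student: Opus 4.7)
The plan is to identify $M^{\rm{alg}}(\Gamma,w)$ with the quotient, by a finite group, of a product of moduli spaces of smooth pointed curves — one factor per vertex of $\Gamma$ — and then read off all three claims from that description.

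\medskip
\noindent\textbf{Normalization side.} For each vertex $v\in V(\Gamma)$ let $H(v):=\epsilon^{-1}(v)$ be the set of half-edges adjacent to $v$, and set $n_v:=|H(v)|$. Iterating Remark~\ref{ind}, the normalization of any $(X;\underline{p})\in M^{\rm{alg}}(\Gamma,w)$ splits as a disjoint union of smooth pointed curves indexed by $V(\Gamma)$: to $v$ is associated a smooth curve $X_v$ of geometric genus $w(v)$ carrying $n_v$ marked points labeled by $H(v)$, where legs record the original $p_i$ and the two halves of an edge record the two branches of the corresponding node. Stability of $(\Gamma,w)$ (Remark~\ref{stablec}) forces $2w(v)-2+n_v>0$ for every $v$ — by a trivial case-analysis on $w(v)\in\{0,1,\geq 2\}$ — so each $(X_v;\{q_h\}_{h\in H(v)})$ is a genuine stable smooth $n_v$-pointed curve.

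\medskip
\noindent\textbf{Gluing map and its fibers.} Reversing the normalization furnishes a natural surjective gluing morphism
$$
\mu\colon \prod_{v\in V(\Gamma)} M_{w(v),\,H(v)} \;\longrightarrow\; M^{\rm{alg}}(\Gamma,w),
$$
which, for every edge $e=\{h,\bar h\}$ with $\epsilon(h)=v,\ \epsilon(\bar h)=v'$, identifies the marking $q_h\in X_v$ with the marking $q_{\bar h}\in X_{v'}$ to form a node, and keeps the leg-markings as the $n$ marked points of the glued curve. By construction the resulting curve has dual graph $(\Gamma,w)$, giving surjectivity. The group $\Aut(\Gamma,w)$ acts on the source: an automorphism $\alpha$ is compatible with $\epsilon$, hence maps $H(v)$ bijectively to $H(\alpha(v))$ and permutes the factors accordingly. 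Any isomorphism between two glued curves preserving the markings induces a permutation of components respecting weights and fixing the legs, i.e.\ an element of $\Aut(\Gamma,w)$; hence two points of the source have the same image under $\mu$ iff they are in the same $\Aut(\Gamma,w)$-orbit. Consequently $M^{\rm{alg}}(\Gamma,w)$ is, as a variety, the quotient of $\prod_v M_{w(v),n_v}$ by the finite group $\Aut(\Gamma,w)$. By Fact~\ref{mgnb} each factor $M_{w(v),n_v}$ is an irreducible quasi-projective variety, so the product is, and so is its finite-group quotient; quasi-projectivity as a subset of $\Mgnb$ is then standard, as $M^{\rm{alg}}(\Gamma,w)$ sits as a locally closed stratum in the projective variety $\Mgnb$ (it is cut out, inside the boundary locus of curves with at least $|E(\Gamma)|$ nodes, by the condition of having exactly the prescribed combinatorial type).

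\medskip
\noindent\textbf{Dimension count.} Using the above quotient description,
$$
\dim M^{\rm{alg}}(\Gamma,w)\;=\;\sum_{v\in V(\Gamma)}\bigl(3w(v)-3+n_v\bigr)\;=\;3|w|-3|V(\Gamma)|+\sum_v n_v.
$$
Since each edge contributes two half-edges and each leg contributes one, $\sum_v n_v=2|E(\Gamma)|+n$. Combining with $|w|=g-b_1(\Gamma)=g-|E(\Gamma)|+|V(\Gamma)|-1$ the right-hand side collapses to $(3g-3+n)-|E(\Gamma)|$, so $\codim_{\Mgnb}M^{\rm{alg}}(\Gamma,w)=|E(\Gamma)|$. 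The main conceptual step, and the only one requiring real care, is the second one: setting up the gluing map $\mu$ cleanly and verifying that its fibers are exactly the $\Aut(\Gamma,w)$-orbits; once that is in hand, irreducibility, quasi-projectivity and the codimension formula are essentially formal consequences of Fact~\ref{mgnb}.
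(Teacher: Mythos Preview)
Your proof is correct and follows essentially the same route as the paper: normalize a curve in the stratum, recognize each component as a stable smooth pointed curve, build the surjective gluing map from the product of the corresponding $M_{g_i,n_i}$, observe that the fibers are finite, and compute the dimension. Your version is slightly more refined in that you identify the fibers precisely as $\Aut(\Gamma,w)$-orbits (the paper only uses that they are finite), and you organize the dimension count via $\sum_v n_v = 2|E(\Gamma)|+n$ rather than via $\sum_i \delta_i = 2\delta$, but these are cosmetic differences rather than a genuinely distinct argument.
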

\begin{proof}
Set  $\delta:=|E(\Gamma)|>0$ (otherwise we have a special case of \ref{mgnb}).
Pick a curve $(X;\underline{p})\in M^{\rm{alg}}(\Gamma,w)$, denote by $C_1,\ldots, C_{\gamma}$ its irreducible components, and let $n_i$ be the number of marked points contained in $C_i$,
so that $\sum_1^{\gamma}n_i=n$.

Our $(X;\underline{p})$
determines,
for  every $i=1,\ldots, \gamma$, a  nonsingular curve
of genus $g_i$
  with $n_i+\delta_i$ marked points in it,  $(C_i^{\nu};\underline{p}^{(i)})$,
where $\nu:\sqcup_1^{\gamma} C_i^{\nu}\to X$ denotes the normalization of $X$ and 
$$\delta_i:=|\nu^{-1}(\sing)\cap C_i^{\nu}|.$$
Moreover, as we observed in Remark~\ref{ind},
we have that $(C_i^{\nu};\underline{p}^{(i)})$ 
 is a stable curve, i.e. $(C_i^{\nu};\underline{p}^{(i)})\in M_{g_i, n_i+\delta_i}$.
Once we  have such $\gamma$ pointed curves, the gluing data of
$\nu^{-1}(\sing)$ are uniquely determined by the graph $\Gamma$.
We conclude that there is a surjective morphism
$$
M_{g_1,n_1+ \delta_1}\times\ldots\times M_{g_{\gamma},n_{\gamma}+\delta_{\gamma}}\la M^{\rm{alg}}(\Gamma,w).
$$

The above morphism extends to  $\ov{M}_{g_1,n_1+ \delta_1}\times\ldots\times \ov{M}_{g_{\gamma},n_{\gamma}+\delta_{\gamma}}\to \Mgnb$ 
and $ M^{\rm{alg}}(\Gamma,w) 
$ is open in its image; hence $ M^{\rm{alg}}(\Gamma,w) 
$ is quasiprojective. Now
  for every $i$,    Fact~\ref{mgnb} applies,
 hence $M_{g_i, n_i+\delta_i}$ is irreducible of dimension
$3g_i-3+n_i+\delta _i$ for all $i=1,\ldots, \gamma$.
Hence $ M^{\rm{alg}}(\Gamma,w)$ is irreducible.
Since the above surjection has clearly finite fibers  we get
$$
\dim M^{\rm{alg}}(\Gamma,w)=\sum _{i=1}^\gamma(3g_i-3+n_i+\delta _i)=3\sum _{i=1}^\gamma g_i-3\gamma
+ n+2\delta
$$ 
(since $\sum _{i=1}^\gamma\delta_i=2\delta$). Now $g=\sum _{i=1}^\gamma g_i+\delta -\gamma+1$ hence
$$
\dim M^{\rm{alg}}(\Gamma,w)=3g -3\delta +3\gamma -3
-3\gamma+n+2\delta=3g-3+n-\delta.
$$
\end{proof}

\subsection{Partition analogies}
By Theorem~\ref{Mgt} and Proposition~\ref{ovM} we have   on $\Mgtn$
a partition similar to (\ref{stratMg})
\begin{equation}
\label{stratMgt}
\Mgnt=\bigsqcup_{(\Gamma,w) {\text{ stable, $n$ legs,    genus }} g}\Mt(\Gamma, w).
\end{equation}  
In the next statement we highlight some topological and combinatorial analogies which, undoubtedly, are known to the experts. We include a (partial) proof, as we believe it is quite   instructive.
By $\dim M^{\rm{alg}}(\Gamma, w)$ we mean the dimension as an algebraic variety,
while
by $\codim \Mt(\Gamma, w)$ we mean the orbifold  codimension in $\Mgtn$.

\begin{thm}
\label{corr}
Assume $2g-2+n\geq 1$. Consider the partitions (\ref{stratMg}) and (\ref{stratMgt}), and 
the bijection  
$$
M^{\rm{alg}}(\Gamma, w)\mapsto \Mt(\Gamma, w)
$$
where $(\Gamma,w)$ varies among   stable graphs of genus $g$  with $n$ legs.
Then the following properties hold.
\begin{enumerate}
\item
\label{corr1}
$\dim M^{\rm{alg}}(\Gamma, w)=\codim \Mt(\Gamma, w)=3g-3+n-|E(\Gamma)|$.
\item
\label{corr2} With the notation   (\ref{min}),
$$
M^{\rm{alg}}(\Gamma, w)\subset\ov{M^{\rm{alg}}(\Gamma', w')}
\Leftrightarrow \Mt (\Gamma', w')\subset\ov{\Mt(\Gamma, w)}
\Leftrightarrow (\Gamma, w)\geq (\Gamma', w').$$
\item
\label{corr3}
Let $(\Gamma,w)$ be such that  $E(\Gamma)\neq \emptyset$. 
Then there exists a stable graph $(\Gamma',w')$ with $|E(\Gamma')|= |E(\Gamma)|-1$ such that 
$M^{\rm{alg}}(\Gamma, w)\subset\ov{M^{\rm{alg}}(\Gamma', w')}$
and  $\Mt (\Gamma', w')\subset\ov{\Mt(\Gamma, w)}$.
\end{enumerate}
\end{thm}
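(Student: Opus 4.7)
The plan for (\ref{corr1}) is to combine two dimension counts that are essentially already in hand. On the algebraic side the formula is Lemma~\ref{dimstrata}. On the tropical side, Proposition~\ref{ovM}(\ref{quot}) realizes $\Mt(\Gamma,w)$ as the quotient of the cone $R(\Gamma,w)=\R^{E(\Gamma)}_{>0}$ by the finite group $\Aut(\Gamma,w)$, so it is a topological orbifold of dimension $|E(\Gamma)|$; since Remark~\ref{puremt} gives $\Mgnt$ pure dimension $3g-3+n$, the codimension of $\Mt(\Gamma,w)$ equals $3g-3+n-|E(\Gamma)|$.

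For (\ref{corr2}) I would establish the two equivalences separately, each routed through the combinatorial condition $(\Gamma,w)\geq(\Gamma',w')$. On the tropical side, one implication is immediate from Proposition~\ref{ovM}(\ref{ovMc}): if $(\Gamma,w)\geq(\Gamma',w')$, then $\Mt(\Gamma',w')$ appears as a stratum of the local closure $\ov{\Mt(\Gamma,w)}$, and continuity of the quotient map $\pi_g$ from \eqref{qg} transfers the inclusion to the closure computed inside $\Mgnt$. For the converse, pick a $3$-regular graph $\Gamma_0$ with $b_1(\Gamma_0)=g$ contracting onto $(\Gamma,w)$ (such a $\Gamma_0$ exists by the argument in the proof of Theorem~\ref{Mgt}); a sequence in $\Mt(\Gamma,w)$ converging in $\Mgnt$ to a point of $\Mt(\Gamma',w')$ admits, via the finite fibers of $\ov{\pi}_g$ (Lemma~\ref{preimage}) together with Hausdorffness (Theorem~\ref{Mgt}(\ref{MgtH})), a convergent subsequence lifting into $\ov{R(\Gamma_0,\underline{0})}$ whose limit sits in a face $F_I$; by Lemma~\ref{cl}, that face corresponds to a weighted contraction of $(\Gamma,w)$ which, by matching isomorphism classes, must be $(\Gamma',w')$, giving $(\Gamma,w)\geq(\Gamma',w')$.

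The algebraic equivalence in (\ref{corr2}) is the classical description of the stratification of $\Mgnb$. The key input is that the nodes of a stable pointed curve admit unobstructed, independent analytic smoothings: given $(X,\underline p)\in M^{\rm{alg}}(\Gamma,w)$ and any subset $S\subset E(\Gamma)$ such that $(\Gamma_{/S},w_{/S})$ is stable (equivalently, by Remark~\ref{stablec}, such that the partial smoothing is itself a stable curve), one constructs a one-parameter deformation smoothing exactly the nodes indexed by $S$ whose generic fiber has dual graph $(\Gamma_{/S},w_{/S})$; this shows $M^{\rm{alg}}(\Gamma,w)\subset\ov{M^{\rm{alg}}(\Gamma_{/S},w_{/S})}$. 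Conversely, any limit in $\ov{M^{\rm{alg}}(\Gamma',w')}$ of curves with dual graph $(\Gamma',w')$ is obtained by acquiring new nodes, whose combinatorial effect is precisely that the new dual graph $(\Gamma,w)$ contracts back to $(\Gamma',w')$.

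Finally, for (\ref{corr3}) I would choose any $e\in E(\Gamma)$ and set $(\Gamma',w'):=(\Gamma_{/e},w_{/e})$, so that $|E(\Gamma')|=|E(\Gamma)|-1$ and $(\Gamma,w)\geq(\Gamma',w')$; the only verification is that $(\Gamma',w')$ remains stable, which is a short case analysis. If $e$ is a loop at $v$, the contracted vertex has weight $w(v)+1\geq 1$ and valence $\operatorname{val}(v)-2\geq 1$, since stability of $(\Gamma,w)$ forces $\operatorname{val}(v)\geq 3$ when $w(v)=0$; if $e$ joins distinct vertices $u\neq v$, the new vertex has weight $w(u)+w(v)$ and valence $\operatorname{val}(u)+\operatorname{val}(v)-2$, trivially $\geq 3$ when both weights are $0$ and $\geq 2$ when the total weight is $1$. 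Both inclusions then follow from (\ref{corr2}). I expect the principal technical hurdle to lie in the converse direction of the tropical part of (\ref{corr2}): one must argue that a closure relation formed in the quotient topology of $\Mgnt$ lifts faithfully to a closure relation in the local cone model $\ov{R(\Gamma_0,\underline{0})}$, using the finite fibers and Hausdorffness of $\ov{\pi}_g$ in place of the properness properties available in the algebraic setting.
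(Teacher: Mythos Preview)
Your plan matches the paper's proof closely: part~(\ref{corr1}) is handled identically via Lemma~\ref{dimstrata} and the cone description; part~(\ref{corr3}) is the same one-line contraction argument (your stability case analysis is redundant, since the remark after Definition~\ref{stable} already records that weighted contractions of stable graphs are stable); and for the algebraic half of (\ref{corr2}) both you and the paper use smoothing/degeneration of nodes, though the paper spells out the converse direction explicitly by partially normalizing a one-parameter family at the persistent nodes and identifying the resulting connected components with the vertices of $\Gamma'$, rather than appealing to deformation theory as a black box.

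The one place your outline diverges is the tropical equivalence in (\ref{corr2}). The paper disposes of it in one line by citing Proposition~\ref{ovM}(\ref{ovMc}), reading $\ov{\Mt(\Gamma,w)}$ as the space defined in \eqref{MG} rather than as a closure computed inside $\Mgnt$. You instead try to show the two notions of closure agree via a sequential lifting argument, and there is a small gap: finite fibers of $\ov{\pi}$ together with Hausdorffness of $\Mgnt$ do not by themselves force a sequence in $\Mt(\Gamma,w)$ to admit a convergent lift to $\ov{R(\Gamma_0,\underline{0})}$, since that cone is not compact and nothing yet prevents edge lengths from escaping to infinity. What actually makes the argument go through is the explicit neighborhood basis built in the Hausdorffness proof of Proposition~\ref{ovM} (property~(2) there: a small ball around a boundary point meets a face $F_I$ only if the point lies in $\ov{F_I}$), which forces the face for $(\Gamma',w')$ to sit in the closure of the face for $(\Gamma,w)$. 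If you want to keep the global-closure formulation, invoke that property rather than finiteness of fibers.
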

  \begin{proof}
 For (\ref{corr1}), the statement about $M^{\rm{alg}}(\Gamma, w)$ follows from Lemma~\ref{dimstrata}.
On the other hand, we observed above that  $
\dim \Mt(\Gamma,w)=|E(\Gamma)|$; hence we get    $\codim \Mt(\Gamma,w)=\dim \Mgtn-|E(\Gamma)|=3g-3+n-|E(\Gamma)|$.

Now   (\ref{corr2}), whose second double implication  follows from Proposition~\ref{ovM}(\ref{ovMc}).
We give the proof of the remaining part only in case $n=0$, leaving the (straightforward) generalization to the reader.

Let  $X\in M^{\rm{alg}}(\Gamma, w)$ be a stable curve; if $X$ lies in the closure of  
$M^{\rm{alg}}(\Gamma', w')$ there exist families of curves  with dual graph  $(\Gamma', w')$ specializing to $X$. We pick one of these  families,
$f:{\mathcal X}\to B$ with a one dimensional base $B$ with a  marked point $b_0$,
so that the fiber of $f$ over every  $b\neq b_0$ is a
 stable curve $X_b\in M^{\rm{alg}}(\Gamma, w)$, while the fiber over $b_0$ is isomorphic to $X$.

Under such a specialization every node of $X_b$ specializes to a node of $X$,
and  distinct nodes specialize to distinct nodes.
This singles out a  set $T \subset E(\Gamma)$
of nodes of $X$,
 namely 
$T$ is the set of nodes  that are
specializations of nodes of $X_b$. Let $S=E(\Gamma)\smallsetminus T$, so $S$ parametrizes the nodes of $X$ that do not come from nodes of $X_b$.
Consider the graph $(\GS,\wS)$;
we claim that $(\GS,\wS)=(\Gamma',w')$.
To prove it we shall use the notation of Definition~ \ref{cont}.
 
By construction we have a bijection between $E(\Gamma')$ and $E(\GS)=T$,
 mapping an edge of $\Gamma'$, i.e. a node of $X'$, to the node in $X$
 to which it specializes.
 
 The total space ${\mathcal X}$ of our family of curves   is singular along the nodes of the fibers $X_b$,
for $b\neq b_0$. Let us desingularize ${\mathcal X}$ at such loci (exactly $|T|$ of them);
 we thus obtain a new family
 ${\mathcal Y}\to B$ whose fiber over $b\neq b_0$ is the normalization of $X_b$.
 The   fiber over $b_0$ is the partial normalization  of $X$ at $T$, which we denote by $Y$; so its dual graph  satisfies
 $$
 \Gamma_{Y}=\Gamma - T.
 $$
Notice that  ${\mathcal Y}\to B$ is a
 union of families parametrized by the irreducible components of $X_b$, i.e. by the vertices of $\Gamma'$.
 Let us denote these families by ${\mathcal Y}({v'})\to B$. So if $b\neq b_0$  the fiber of ${\mathcal Y}({v'})$ over $b$ is the smooth irreducible component corresponding to  ${v'}\in V(\Gamma')$. 
 The fiber over $b_0$ of ${\mathcal Y}(v')\to B$ 
 is a 
 connected component of $Y$, which we denote $Y(v')$.
Of course $Y(v')$
 determines  a set of vertices of $\Gamma$
 (those corresponding to its components). 
Now notice that  two different vertices  of $\Gamma'$ determine in this way disjoint sets of vertices of $\Gamma$.
 Therefore we have a surjection $\phi:V(\Gamma)\to V(\Gamma')$
 mapping each vertex $v$ to the vertex $v'$ such that the component corresponding to $v$ lies  in $Y(v')$.  
 It is clear that $\phi(v_1)=\phi(v_2)$
  if and only if $v_1$ and $v_2$ belong to the same connected component of $\Gamma\smallsetminus T$.
Therefore $\phi$ is the same map as the map $\sigma_V:V(\Gamma)\to V(\GS)$.
This shows that $V(\Gamma')$ and $V(\GS)$ are in natural bijection, and hence that
$\Gamma'\cong \GS$.
Finally, since the arithmetic genus of a family of algebraic curves is constant, we have, for any $v'\in V(\Gamma')$,
that the genus of the component  corresponding to $v'$, i.e. the weight $w'(v')$, 
is equal to the arithmetic genus of the limit curve $Y(v')$. Therefore
$$
w'(v')=b_1(\Gamma_{Y(v')})+\sum_{v\in \sigma_V^{-1}(v')}w(v)=b_1({\sigma^{-1}(v')})+\sum_{v\in \sigma_V^{-1}(v')}w(v).
$$
By (\ref{wS}) the weight function $w'$ coincides with $\wS$; so we are done.

Conversely, suppose that $(\Gamma, w)\geq (\Gamma', w')$ with  $ (\Gamma', w')= (\GS, \wS)$
for some $S\subset E(\Gamma)$; let $T:=E(\Gamma)\smallsetminus S$.
We shall show how to reverse the procedure we just described.
Let $X\in M^{\rm{alg}}(\Gamma, w)$ and let $Y\to X$ be the normalization of $X$ at $T$, so that 
$\Gamma_Y=\Gamma-T$.
Notice that $Y$ is endowed with $|T|$ pairs of smooth distinguished points, namely the branches over the nodes in $T$, and it is thus a disjoint union of stable
pointed curves. Therefore, by Remark~\ref{ind}, there exists a family of curves $\Y\to B$ with $|T|$ pairs of disjoint sections
(with $\dim B=1$,  $b_0\in B$, and $\Y$ a possibly disconnected surface)
whose fiber over $b_0$ is $Y$ (as a pointed curve) and
  whose fiber over $b\neq b_0$ is a disjoint union of smooth curves with $2|T|$ distinct points.
  We let $\X$ be the surface obtained by gluing together the $|T|$ pairs of sections.
  It is clear that $\X$ is a family of nodal curves over $B$, whose fiber over $b_0$ is $X$ and whose fiber over $b\neq b_0$ lies in $M^{\rm{alg}}(\Gamma',w')$.
  
  Now (\ref{corr3}). Let $e\in E(\Gamma)$ and set $(\Gamma',w')=(\Gamma_{/e},w_{/e})$. Then $(\Gamma',w')$ is stable and
  $|E(\Gamma')|=|E(\Gamma)|-1$. By (\ref{corr2}) we are done.
\end{proof}
 
 \begin{cordef}
 \label{kstrat}
The $k$-{\emph {dimensional stratum}} of the combinatorial partition of $\Mgnb$ given in (\ref{stratMg})
is defined as the following closed subscheme of $\Mgnb$:
\begin{equation}
\Cgk:=\ov{\bigsqcup_{\dim M^{\rm{alg}}(\Gamma,w)= k}M^{\rm{alg}}(\Gamma, w)}=\bigsqcup_{\dim M^{\rm{alg}}(\Gamma,w)\leq k}M^{\rm{alg}}(\Gamma, w).
\end{equation}
\end{cordef}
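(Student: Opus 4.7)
The plan is to prove the claimed set-theoretic equality; once this is established, the closedness of $\Cgk$ in $\Mgnb$ is automatic, and we equip it with the reduced induced scheme structure. The argument uses only parts (\ref{corr1})--(\ref{corr3}) of Theorem~\ref{corr}.

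For the inclusion ``$\subseteq$'', I would observe that the partition (\ref{stratMg}) combined with Theorem~\ref{corr}(\ref{corr2}) gives
$$
\ov{M^{\rm{alg}}(\Gamma', w')} = \bigsqcup_{(\Gamma, w) \geq (\Gamma', w')} M^{\rm{alg}}(\Gamma, w)
$$
for every stable $(\Gamma',w')$; and any $(\Gamma, w) \geq (\Gamma', w')$ satisfies $|E(\Gamma)| \geq |E(\Gamma')|$, hence $\dim M^{\rm{alg}}(\Gamma, w) \leq \dim M^{\rm{alg}}(\Gamma', w')$ by (\ref{corr1}). Taking the union over all $(\Gamma',w')$ with $\dim M^{\rm{alg}}(\Gamma',w') = k$ shows that every stratum appearing in $\Cgk$ has dimension at most $k$. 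In particular $\Cgk$ is a finite union of closed strata-closures, hence closed.

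For the reverse inclusion, given $(\Gamma, w)$ with $d := \dim M^{\rm{alg}}(\Gamma, w) \leq k$, I would iterate Theorem~\ref{corr}(\ref{corr3}) exactly $k - d$ times. Each application produces a stable graph with one edge fewer (hence dimension one higher by (\ref{corr1})) whose closure contains the previous stratum; using the transitivity ``$A \subseteq \ov{B}$ and $B \subseteq \ov{C}$ imply $A \subseteq \ov{C}$'', these combine into a single containment $M^{\rm{alg}}(\Gamma, w) \subseteq \ov{M^{\rm{alg}}(\Gamma_*, w_*)}$ for a stable graph $(\Gamma_*, w_*)$ of dimension exactly $k$, i.e., $M^{\rm{alg}}(\Gamma, w) \subseteq \Cgk$. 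Each application is legitimate because the intermediate dimensions $d, d+1, \ldots, k-1$ are all strictly less than $\dim \Mgnb = 3g-3+n$, so every intermediate graph has nonempty edge set --- the hypothesis required to invoke (\ref{corr3}). There is no real obstacle here: the argument is essentially bookkeeping once the three parts of Theorem~\ref{corr} are in hand, with the only point worth flagging being the elementary telescoping of closure containments used in the iteration.
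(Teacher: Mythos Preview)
Your proposal is correct and matches the paper's intent: the statement is recorded as a Corollary--Definition with no separate proof, precisely because it is meant to follow immediately from parts (\ref{corr1})--(\ref{corr3}) of Theorem~\ref{corr}, and your argument spells out exactly that derivation. The only implicit assumption worth noting is $0\le k\le 3g-3+n$, which is needed for the equality to hold and which you invoke when checking that the intermediate graphs in the iteration have nonempty edge sets.
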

\subsection
{Connectedness properties.}
\label{scho}
The next definition is adapted from  \cite[Definition 3.3.2]{MS}.
\begin{defi}
\label{connconn}
Let $X$ be a topological space of  pure dimension $d$;
see Remark~\ref{dim}.  
Assume that $X$ is endowed with a decomposition $X=\sqcup_{i\in I}X_i$,
where every $X_i$ is a connected orbifold.
 We say that $X$ is {\it connected through codimension one} if the subset
 $$
 \bigsqcup_{i\in I: \dim X_i\geq d-1}X_i\subset X
 $$
 is connected.
\end{defi}
Connectedness through codimension one is a strong form of connectedness (if $X$ is   connected through codimension one, it is also
 connected). 
 Tropical varieties  (associated to prime ideals)  have this property, which in fact is a fundamental one;
 see the  Structure Theorem  in \cite[Ch. 3]{MS}.

Although $\Mgtn$ is not  a tropical variety in general,  we may still  ask whether $\Mgnt$ is connected through codimension one.
 The answer   is yes (see \cite{BMV} for the unpointed case).
 
 The second part of the next proposition, stating that $\Cgk$ is connected,  may be known, although we don't have a reference;   our  proof,  exhibiting as a a simple consequence of the first part,  is a good    illustration of
 the interplay between the combinatorial and the algebro-geometric point of view.
 \begin{prop}
\label{connt1}
\begin{enumerate}
\item
$\Mgnt$ is connected  through codimension one.
\item
If $k\geq 1$ the $k$-dimensional stratum $\Cgk$ of the combinatorial partition of $\Mgnb$ is connected.
\end{enumerate}\end{prop}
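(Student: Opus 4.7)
My plan is to prove part (1) via a combinatorial argument on the stratification of $\Mgnt$, and then derive part (2) from it. Throughout I will use Theorem~\ref{corr} and Remark~\ref{puremt}: the top-dimensional strata of $\Mgnt$ are the $\Mt(\Gamma,\mo)$ for 3-regular $\Gamma$ with $b_1(\Gamma)=g$ and $n$ legs, and the codimension-one strata are the $\Mt(\Gamma',w')$ with $|E(\Gamma')|=3g-3+n-1$. Each stratum is a quotient of a connected cone by a finite group and is therefore connected, so the subset appearing in Definition~\ref{connconn} is a finite union of connected pieces glued along the incidence relation $(\Gamma,\mo)\geq(\Gamma',w')$.

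For part (1) I would encode this gluing as a bipartite incidence graph $\mathcal{I}$ whose vertices on one side are the 3-regular stable graphs of genus $g$ with $n$ legs, on the other side the codimension-one stable graphs, with an edge between $(\Gamma,\mo)$ and $(\Gamma',w')$ whenever $(\Gamma,\mo)\geq(\Gamma',w')$. Connectedness through codimension one of $\Mgnt$ is then equivalent to connectedness of $\mathcal{I}$. A short count using Lemma~\ref{max} shows that the codimension-one stable graphs fall into exactly two types: a pure graph with one 4-valent vertex and all others 3-valent (admitting three 3-regular refinements, the three ways of splitting the 4-valent vertex into a pair of 3-valent vertices joined by a new edge), or a graph with a single pendant vertex of weight $1$ and all other vertices pure and 3-valent (admitting a unique 3-regular refinement, replacing the pendant by a loop at a weight-$0$ vertex). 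Connectedness of $\mathcal{I}$ thereby reduces to the classical fact that any two 3-regular graphs of genus $g$ with $n$ legs are related by a sequence of \emph{Whitehead moves} --- contract a non-loop edge and re-expand in a different way --- cf.\ \cite{CuVo} and references therein.

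For part (2) I would first establish that $C_g^{(1)}$ is connected, using the same incidence graph $\mathcal{I}$. By Lemma~\ref{dimstrata} the $1$-dimensional strata of $\Mgnb$ correspond bijectively to the codimension-one stable graphs, and each closure $\ov{M^{\rm{alg}}(\Gamma',w')}$ is an irreducible projective curve containing every $0$-dimensional stratum $M^{\rm{alg}}(\Gamma,\mo)$ with $(\Gamma,\mo)\geq(\Gamma',w')$. Two such closures meet in $C_g^{(1)}$ iff their codimension-one graphs share a $3$-regular refinement --- precisely the incidence recorded by $\mathcal{I}$. Connectedness of $\mathcal{I}$ therefore forces $C_g^{(1)}$ to be connected. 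For general $k\geq 1$ and any $M^{\rm{alg}}(\Gamma,w)\subset\Cgk$, I would iteratively refine $(\Gamma,w)$ by adding one edge at a time (splitting a vertex of valence $\geq 4$ into two vertices of valence $\geq 3$ joined by a new edge, or replacing a vertex of weight $\geq 1$ by a vertex of weight one less carrying a new loop; stability is preserved in both cases) until reaching $(\Gamma'',w'')$ with $|E(\Gamma'')|=3g-3+n-1$. Then $(\Gamma'',w'')\geq(\Gamma,w)$, so $\ov{M^{\rm{alg}}(\Gamma,w)}\subset\Cgk$ is an irreducible projective subvariety of $\Cgk$ meeting $C_g^{(1)}$ at $M^{\rm{alg}}(\Gamma'',w'')$. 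Combined with the connectedness of $C_g^{(1)}$, this yields connectedness of $\Cgk$.

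The main obstacle is the combinatorial connectedness of $\mathcal{I}$: the assertion that trivalent graphs of fixed genus and fixed number of legs are connected under Whitehead moves. In the unpointed case this is classical and underpins the contractibility of Culler--Vogtmann outer space; the pointed variant follows by an easy induction on $n$ (erasing a leg) or a direct adaptation of the same arguments. All the remaining ingredients --- the equivalence between connectedness through codimension one of $\Mgnt$ and that of $\mathcal{I}$, the classification of codimension-one graphs via Lemma~\ref{max}, and the stability-preserving uncontraction procedure --- are routine verifications.
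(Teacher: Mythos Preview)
Your proposal is correct and follows essentially the same strategy as the paper's proof. Both reduce part (1) to the classical fact that 3-regular graphs of fixed genus (and, by an easy induction, fixed number of legs) are connected under contract-and-re-expand moves on non-loop edges; the paper invokes this as a result of Hatcher--Thurston \cite{HT}, while you cite the Culler--Vogtmann framework, but the content is the same. Your bipartite incidence graph $\mathcal{I}$ is just a convenient repackaging of the chain of contractions in Fact~\ref{linkage}, and your explicit classification of codimension-one stable graphs into the ``one 4-valent vertex'' and ``one weight-1 pendant'' types is a pleasant addition that the paper leaves implicit (and is not strictly needed, since the Hatcher--Thurston chain only passes through the first type).

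For part (2) the paper is considerably terser than you: it simply observes that $k=1$ follows from part (1) via the order-reversing correspondence of Theorem~\ref{corr}, and that $k\geq 2$ follows from $k=1$. Your argument spells out exactly this reduction --- matching 0- and 1-dimensional algebraic strata with top- and codimension-one tropical strata, and then using an iterated one-edge refinement to connect any $M^{\rm alg}(\Gamma,w)$ in $\Cgk$ to $C_g^{(1)}$ --- so the two proofs coincide once the details are filled in.
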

\begin{proof}
By Theorem~\ref{corr}, the second part in case $k=1$ is an immediate consequence of the first,
and the case $k\geq 2$ follows from it.

So, let us concentrate on the tropical moduli space.
We saw  in
\ref{puremt}
that $\Mgnt$ is of pure dimension $3g-3+n$.  
We know that (\ref{stratMgt}) is a decomposition of $\Mgtn$  as disjoint union of orbifolds 
of known dimensions.
We must therefore show that its subset
$$
 \bigsqcup_{\codim\Mt(\Gamma,w)\leq 1} \Mt(\Gamma, w)= \bigsqcup_{|E(\Gamma)|\geq 3g-4+n} \Mt(\Gamma, w)
$$   is connected.
Recall that in the above decomposition the top dimensional orbifolds, of dimension $3g-3+n$, are
precisely the ones for which  $\Gamma$ is a $3$-regular graph.
To prove the statement we apply   the following result of Hatcher-Thurston  \cite[Prop. page 236]{HT}
(a purely combinatorial proof, with other applications to moduli of tropical curves, may be found in   \cite{Ctrop}): 
\begin{fact}
\label{linkage}
Let  $\Gamma$ and $\Gamma '$ be two $3$-regular connected  graphs
   of the same genus, free from legs. Then 
there exists a finite sequence
\begin{equation}
\xymatrix{
\Gamma=\Gamma_1 \ar[rd] ^{ }&&\Gamma_3\ar[ld] _{}\ar[rd]^{}&\ldots &  \ldots&\ar[ld]_{}\Gamma_{2h+1}=\Gamma ' \\
 & \Gamma_2   
 &  &\ldots& \Gamma_{2h}&\\
}
\end{equation}
where every arrow is the  map  contracting precisely one  edge which is not a loop. Moreover, every odd-indexed graph
in the diagram above is $3$-regular.
\end{fact}
More precisely,  every even-indexed graph above satisfies
$$
\Gamma_{2i}=(\Gamma_{2i-1})_{/e}=(\Gamma_{2i+1})_{/e'}
$$
with $e\in E(\Gamma_{2i-1})$ and $e'\in E(\Gamma_{2i+1}).$

To use this for our statement, let us first suppose that $n=0$.
Then we may  apply the above diagram to   two weighted graphs  $(\Gamma,\mo)$ and $(\Gamma',\mo)$.
Since   no loop gets contracted,
we may view the arrows of the above diagram as weighted contractions,
where the weight function is always the zero function.
By hypothesis 
the odd-indexed sets $\Mt(\Gamma_{2i-1},\mo)$ are  codimension zero sets 
of the decomposition (\ref{stratMgt}), hence
every $\Mt(\Gamma_{2i},\mo)$  has codimension one.

The above fact says    that 
 the closures of two consecutive odd-indexed sets
$\Mt(\Gamma_{2i-1},\mo)$ and $\Mt(\Gamma_{2i+1},\mo)$ intersect in  $\Mt(\Gamma_{2i},\mo)$.
Therefore, if $n=0$,  we are done.

If $n>0$ we may assume $3g-3+n\geq 2$, for otherwise the result follows from what we know already
(namely, that $\Mgnt$ is connected).
 Now, we need to show that   Fact~\ref{linkage} holds for graphs with $n$ legs, in such a way that no leg gets contracted by the maps in the diagram.
 This is  easily proved by induction on $n$, noticing that
if we remove from  our $3$-regular graph  $\Gamma$
a leg together with its base vertex, we obtain a $3$-regular graph of the same  genus as $\Gamma$,
and $n-1$ legs
(as $3g-3+n\geq 2$).  Conversely, if we add a leg in the interior of any edge of a $3$-regular graph with $n-1$ legs, we get a $3$-regular graph with $n$ legs, and all $3$-regular graphs with $n$ legs can be obtained in this way (more details can be found in \cite[Prop. 3.3.1]{Ctrop}).
\end{proof}

\section{Moduli spaces via Teichm\"uller theory}
\label{secteich}
From now on, we restrict to the unpointed case $n=0$.
Up to now we considered the moduli space of stable algebraic curves $\Mgb$,
or, which is the same, the moduli space of stable equivalence classes of nodal curves.
The  construction of $\Mgb$ as a projective variety, as given in \cite{Gie},
can be summarized as follows
(we refer also to \cite[Chapt. 4]{HM} for details and references). 

By definition,
stable curves have an ample dualizing bundle,
so they can all be embedded in  some projective space using a suitable power of it.
Furthermore, this projective space can be chosen to be the same 
$\pr{r}$ for all stable curves of fixed genus 
$g$. 
Since the dualizing bundle is preserved by isomorphisms, two such projective curves are abstractly isomorphic if and only if they are
projectively equivalent, i.e. conjugate by an element of the group $G=\Aut(\pr{r})=PGL(r+1)$.

Let us denote by $H$ 
 the set of all these projective models of our curves.
By what we said 
there is an obvious bijection between 
the quotient set $H/G$  and the set of isomorphism classes of genus $g$ stable curves.
 
To give the set $H/G$ an algebraic structure,
one proceeds by   giving $H$ (via Grothendieck's theory of Hilbert schemes), and then $H/G$  
(via Mumford's Geometric Invariant Theory) the structure of an algebraic variety.

The approach we just sketched has many advantages;
we wish to mention a few of them.
One  is the fact that stable curves are treated at the same time as  smooth curves,
so that  the same construction yields   a projective algebraic variety $\Mgb=H/G$, the moduli space of stable curves, and an open dense subset $M_g\subset \Mgb$,
  the moduli space of smooth curves.

Another consequence of this construction is that, being purely algebraic, it works in any characteristic. Since the space $H$ turns out to be irreducible and smooth,
one obtains that $\Mgb$, and hence $M_g$, is irreducible (an important fact that was not known in positive characteristic) and normal, i.e. mildly singular.

Finally, the quotient $H\to H/G$ can be rather explicitly described, locally at every point.
By what we said, the stabilizers of the action of $G$ are the automorphism groups of our abstract stable curves, which are finite by definition.
In particular, locally at curves having no nontrivial automorphisms (and there is a dense open subset of them as soon as $g\geq 3$) the space $\Mgb$ is nonsingular.

A somewhat more sophisticated  construction of $\Mgb$    can be given, by first
constructing the related algebraic  stack, 
$\Mgbst$, and then showing that this stack admits a projective  moduli scheme $\Mgb$. The stack $\Mgbst$ is preferable from the point of view of moduli theory, as it retains
more information about the moduli problem than the moduli variety $\Mgb$. 
Stacks form a larger category than algebraic varieties or schemes, and may be viewed, loosely speaking, as the algebraic counterparts of topological orbifolds.
We have mentioned this more general point of view as it relates to some of the open problems that we shall list at the end of the paper. More details can be found in \cite{DM} or \cite[Chapt 12 and 14]{gac}.

We will end the section by speaking about another approach  to  the  moduli theory of curves
that can be used both in the  algebraic and tropical situation.

\subsection{The Teichm\"uller approach to   moduli of complex curves}
In complex geometry,
a different, quite natural,  point of view   is the so-called
Teichm\"uller approach, which puts emphasis on the topological aspects. 
We shall now give a very short overview of   profound and fundamental results; for   details    and references we refer to \cite[Chapt. XV]{gac}. 

The starting observation is that every smooth complex curve $C$ of genus $g$ has
the same underlying topological manifold, up to homeomorphism;
namely a  compact, connected, orientable surface of genus $g$, which we shall fix and denote by $S_g$
from now on. 

These topological $2$-manifolds have been deeply  studied in the past, and their classification dates back to the nineteenth century. 
Moreover, the group of isotopy classes of orientation-preserving homeomorphisms\footnote{Two homeomorphisms are isotopic if they can be deformed to one another by a continuous family of homeomorphisms.} of $S_g$ with itself, known as 
the   {\it mapping class group} and denoted by
$\Gamma_g$, has also been 
heavily studied. Among its properties, we need to recall that, denoting by $\Pi_g:=\pi_1(S_g)$, we have
$$
\Gamma_g\cong \Out^+(\Pi_g)\subset \Out(\Pi_g):=\Aut(\Pi_g)/\Inn(\Pi_g) 
$$
where the superscript ``$+$" indicates restriction to orientation preserving automorphisms (see loc. cit. for the precise definition).
Let us go back to our moduli problem. To parametrize complex curves one proceeds by
``marking them", i.e. one fixes a homemorphism, called  {\it marking}, $\mu:S_g\to C$, up to isotopy, and considers the set of isomorphism classes of marked complex curves $(C,\mu)$.
So, $(C,\mu)$ and $(C',\mu')$ are in the same class if there exists an algebraic isomorphism $\iota:C\to C'$
such that the diagram below commutes
\begin{equation}
\xymatrix{
C\ar[rr]_<(0.5){\cong}^<(0.5){\iota}& & C' \\
&S_g\ar[ru] _>(0.6){\mu'}  \ar[lu] ^>(0.6){\mu} 
}
\end{equation}
The {\it Teichm\"uller space} $T_g$ is defined as the set of all such classes
$$
T_g:=\{[(C,\mu)], \  C \  \text{ of genus } g \}.
$$
Using the basic deformation theory of curves,   in particular 
the properties of their so-called Kuranishi family, the Teichm\"uller space $T_g$ is endowed with a natural structure of complex manifold
of dimension $3g-3$. $T_g$ is  known to be a contractible space.

Now, it is clear that there is a surjection
$$
T_g\la M_g;\  \  \  [(C,\mu)]\mapsto [C].
$$
In fact, this surjection is the quotient of $T_g$ by the natural action 
of the mapping class group $\Gamma_g$ given by, for $\gamma\in \Gamma_g$
$$
[(C,\mu)]\mapsto [(C,\gamma\circ\mu)].
$$
Let us conclude this brief description by mentioning that the Teichm\"uller approach has had remarkable applications
in establishing some geometric properties   of $M_g$ and of $M_{g,n}$; see \cite{HM} or  \cite{gac}.

\subsection{The Teichm\"uller approach to   moduli of metric graphs}
We shall now briefly overview some work of  M. Culler and K. Vogtmann (\cite{CuVo}) on moduli of metric graphs.
Their approach, inspired  by the previously  described Teichm\"uller  theory, presents strong analogies with the algebraic situation. By contrast,
  their main goal was not the study of the moduli space of metric graphs, 
 which appears more as a tool, however important, than a goal.
Rather, the principal scope of \cite{CuVo} was to study an important  group:
 the automorphism group of the free group of rank $g$, $F_g$.

Following an extremely successful line of research,
 groups may be studied through their actions on some geometric spaces.
To study the automorphism group of $F_g$, or rather, its quotient by inner automorphisms,
$\Out(F_g)$, in \cite{CuVo} the authors introduce a space, called the {\it outer space} and denoted
here by $O_g$ (as in \cite{VICM})
on which $\Out(F_g)$ acts. 

The starting point is that $F_g$ is the fundamental group, $\pi_1(R_g)$, of the connected graph of genus $g$
with one vertex and $g$ loops, often called the ``rose with $g$ petals", and written $R_g$.

Elements in $\Out(F_g)$ are thus geometrically represented by homotopy equivalences of $R_g$ with itself.

Now, a connected graph $\Gamma$ of genus $g$ is  homotopy equivalent to $R_g$.
By adding a metric structure $\ell$ to $\Gamma$, and letting $\ell$ vary,   one obtains a new
 interesting space on which
$\Out(F_g)$ acts, provided  $\Gamma$ is ``marked", i.e. provided a homotopy equivalence
$\mu:R_g\to \Gamma$ is fixed.
Now, to ensure that the space of all marked genus $g$ metric graphs has finite dimension
one needs to assume that  graphs have no vertices of valence $1$ or $2$ (sounds familiar!).
An extra, normalization, assumption made in \cite{CuVo} is that the sum of the lengths of the edges,
i.e. the {\it volume} of the graph,
$ 
vol(\Gamma,\ell)=\sum_{e\in E(\Gamma)}\ell(e),
$   be equal to $1$.
The outer space $O_g$ is defined as follows
$$
O_g:=\{[(\Gamma,\ell,\mu)]\  \text{ of genus } g, \text{ volume }1, \text{ having no vertex of valence }\leq 2\} 
$$
 where $(\Gamma,\ell,\mu)$ and $(\Gamma',\ell',\mu')$ are in the same class if there exists an isometry between 
 $(\Gamma,\ell)$ and $(\Gamma',\ell')$ commuting with the markings up to homotopy..

So, $O_g$  plays the same role for  metric graphs   as the Teichm\"uller space $T_g$
for   complex curves. The group $\Out(F_g)$ plays the role of the mapping class group $\Gamma_g$.

In \cite{CuVo} the outer space $O_g$ is given the structure of a cell complex, 
it is shown not to be a manifold (not hard), and   to be a contractible space (a theorem, also attributed to Gersten).

The quotient space $O_g/\Out(F_g)$ is thus a moduli space for metric graphs of fixed volume and genus,   having no vertex of valence $\leq 2$. This quotient is thus  closely related to the moduli space of tropical curves.

This connection has not yet been thoroughly investigated,
perhaps because of   the relative  youth  of tropical geometry, or the complexity of the group $\Out(F_g)$. 
Also because, due to the recent, fast development of tropical geometry,
 interest in moduli of metric graphs is probably stronger nowadays
than it was twenty years ago.
We believe it 
 deserves to be   studied as  it offers an entirely      new   perspective.
For example, using what is known for the outer space $O_g$ (see the survey \cite{V})
 one may construct bordifications of the moduli space
 of pure tropical curves other
 than the one we described in the present paper (via weighted, or extended tropical curves).
 This may lead to new, profound  insights, just like it has been the case for
  the moduli space of smooth algebraic curves, for which different  compatifications exist and have been used in different ways.   

\subsection{Analogies}
The analogies between the   Teichm\"uller construction of $M_g$
and the Culler-Vogtmann space, are summarized in the following  table, where
the word ``homeomorphism" stands for ``orientation preserving homeomorphism".
\vfill
\begin{center}
\begin{tabular}{|c|c|}
\hline
&\\
ALGEBRAIC CURVES&METRIC GRAPHS\\
&\\
\hline
&\\
$S_g$ topological surface of genus $g$
&
$R_g$ connected  graph \\
compact connected orientable &with one vertex and $g$ loops.\\
&\\
\hline
&\\
$\Pi_g:=\pi_1(S_g)$
&
$F_g:=\pi_1(R_g) $\\
&\\
\hline
&\\
$C$ algebraic curve,  genus $g$ & $(\Gamma,\ell)$ metric graph,  genus $g$,  volume $1$\\
smooth  projective over $\C$
&connected, no vertex of valence $\leq 2$\\
&\\
\hline
&\\
a {\it marking} of $C$: &  a {\it marking} of $\Gamma$:\\
a homeomorphism up to isotopy &  a homotopy equivalence\\
 $\mu:S_g\la C$ & $\mu:R_g\la \Gamma$\\
&\\
\hline
&\\
The {\it Teichm\"uller Space}: & The {\it Outer Space}:  \\
$T_g:=\{(C,\mu)\}/\sim$  & $O_g:=\{(\Gamma,\ell,\mu)\}/\sim$ \\

$(C,\mu)\sim (C',\mu')$ if $\exists \  C\stackrel{\iota}{\to} C'$ & $(\Gamma,\ell,\mu)\sim (\Gamma ',\ell',\mu')$ if $\exists \  \Gamma\stackrel{\iota}{\to} \Gamma''$\\

$\iota$ an algebraic isomorphism    &  $\iota$ an   isometry\\
 
such that $\iota\mu=\mu'$    & such that $\iota\mu=\mu'$  up to homotopy\\
&\\
\hline
&\\
$T_g$ is a contractible $\C$-manifold & $O_g$ is a contractible cell complex \\
& not a manifold \\
$\dim T_g=3g-3$&$\dim O_g=3g-4$\\
&\\
\hline
&\\
The {\it mapping class group}: &   \\
$\Gamma_g=\{\phi:S_g\to S_g  $  homeomorphism$\}$&  $\{\phi:R_g\to R_g  $  homotopy equivalence$\}$\\
&\\
$\Gamma_g=\Out^+(\Pi_g)$&$\Out(F_g)$ \\
&\\
 \hline
&\\
 $\Gamma_g$ acts on $T_g$ with finite stabilizers& $\Out(F_g)$ acts on $O_g$ with finite stabilizers\\
&\\
 \hline
&\\
 $M_g=T_g/\Gamma_g $ moduli space& $O_g/\Out(F_g)$ moduli space  \\
of smooth genus $g$ algebraic curves & for genus $g$ metric graphs  of volume $1$ \\
&\\
 \hline
\end{tabular}
\end{center}

\section{Open problems.}
We conclude with some interesting lines of research closely related to 
the topics treated in this paper.
\begin{enumerate}[{\bf (1)}]
\item
\label{P1}
{\it Compactifications of $\Mgpnn$.}
We constructed the bordification of the moduli space of pure  pointed tropical curves using weighted   tropical curves (as done  in  \cite{BMV} for the unpointed case, using a different set up) 
and its compactification, by extended weighted  tropical curves.
\smallskip

\noindent{\bf Question 1.} 
\begin{enumerate}[(A)]
\item
What   geometric interpretations can these compactifications be given?
\item
Do there exist other compactifications, or bordifications? 
If so, how do they relate among each other?
\end{enumerate}
\smallskip

As we already mentioned, it should be especially interesting to study these issues in connection
with the set up described in Section~\ref{secteich}.

\smallskip
\item{\it Divisor theory.}
In analogy with the classical theory, for pure tropical curves 
there is a good notion of divisors, linear equivalence and linear systems.
In particular, the theorem of Riemann-Roch is known to hold for pure tropical curves,
by work of A. Gathmann and M. Kerber  \cite{GK}, building upon work of M. Baker and S. Norine (in \cite{BN}) for
combinatorial graphs; see also the work on G. Mikhalkin and I. Zharkov in \cite{MZ}.

One appealing research direction is to explore  the connection with the divisor theory for algebraic curves, smooth or singular. 
There is a clear relation between the combinatorial and the algebraic divisor theory. In fact, let $X$ be a nodal curve and $\G$ its dual graph; then for any divisor $D$ on $X$
its multidegree $\underline{\deg} D$ is naturally 
 a divisor on $\G$, and on any tropical curve supported on $\G$.
The interaction between the two theories is not trivial, and its investigation
 may bring new fertile perspectives also on the classical theory.
This in fact has already  happened,  as shown by the new proof of the famous Brill-Noether theorem
for algebraic curves 
given in  \cite{CDPR}, using the groundwork developed in  \cite{B}. 

Observe that    the ``Specialization Lemma" of \cite{B}, a powerful tool to relate the combinatorial with the algebro-geometric divisor theory,   can be 
 refined    to be applicable 
in the classical algebro-geometric set-up  (see \cite{CBNgraph}) and extended to weighted
graphs (see \cite{AC}).

\smallskip

A related problem   is the  extension of the Riemann-Roch theorem, and of other classical
theorems from Riemann surfaces,    to weighted tropical curves, 
More specifically:

\smallskip
\noindent
{\bf Question 2.} Does a Riemann-Roch theorem hold for line bundles on weighted tropical curves?
What about Clifford's theorem?
\smallskip

A natural approach to these  last problems  would  require, first of all, an answer 
to Question 1.A, raised in
part (\ref{P1}). A proof  of the Riemann-Roch formula for a definition of rank extending the one used in \cite{GK} is given in \cite{AC}.     Other definitions of rank are used, for which the Riemann-Roch formula is not known.

\

\item{\it Tropical orbifolds or stacks.}
Establish a categorical framework for tropical moduli theory.
This was also the leading theme of a school in tropical geometry,
organized by E. Brugall\'e, I. Itenberg and G. Mikhalkin, which took place in March 2010.
As we   emphasized a few times in the paper, the spaces $\Mgpnn$ and $\Mgtn$ are
not manifolds, and seem too complicated to be tropical varieties (with a few known exceptions).

\smallskip

\noindent{\bf Question 3.}  What is a good category to use in tropical geometry?
Can this category be defined so as to include tropical varieties?

\smallskip

Comparing with the algebraic setting, as we mentioned in Remark~\ref{moduli}, moduli spaces for algebraic curves can be studied within the category of  (singular) algebraic schemes, or, more generally, within that of Deligne-Mumford stacks
(but also, that of algebraic spaces, or that of orbifolds). 

A promising, seemingly unexplored,  line of research could  be, again, to study the connection
with the work in \cite{CuVo} and the later developments. 
\end{enumerate}
 
\end{document}